\documentclass[11pt]{article}

\usepackage{amssymb,amsmath,euscript,bbm,xcolor,graphicx,epstopdf}
\setlength{\oddsidemargin}{.1in} \setlength{\evensidemargin}{.1in}
\setlength{\textwidth}{6.2in} \setlength{\textheight}{8.2in}
\setlength{\topmargin}{-0.5in} \setlength{\footskip}{1cm}

\newtheorem{proposition}{Proposition}[section]
\newtheorem{lemma}[proposition]{Lemma}
\newtheorem{theorem}[proposition]{Theorem}

\newtheorem{corollary}[proposition]{Corollary}

\def\la{\lambda}
\def\La{\Lambda}
\def\ep{\varepsilon}

\def\l{{\langle}}
\def\r{\rangle}

\newcommand{\wt}{\widetilde}
\def\R{{\mathbb R}}

\def\E{{\mathbb E}}
\def\P{{\mathbb P}}

%
%
%
%
\makeatletter \@addtoreset{equation}{section} \makeatother
%
%

%
%
%
%
%
\newcommand {\qed}%
{%
    {}\hfill
    {}\hfill
    {$\square $}%
    \vspace {0.3cm}%
    \pagebreak [2]%
    \par
}%
%
%
%
%
%
%
\newenvironment{proof}[1]{%
    \vspace{0.3cm}%
    \pagebreak [2]%
    \par%
    \noindent {\bf  Proof~#1\ }}{\qed}%
%
%
%
\newenvironment{example}{%
    \vspace{0.3cm} \pagebreak [2]%
    \par%
    \refstepcounter{proposition}%
    \noindent%
    {\bf  Example~\theproposition\ }}{}%
%
%
%
%
%
\newenvironment{remark}{%
    \vspace{0.3cm} \pagebreak [2]%
    \par%
    \refstepcounter{proposition}
    \noindent%
    {\bf Remark~\theproposition\  }}{}%
%
%
%
%
%
%
%

\begin{document}

\title {Distribution of the Height of Local Maxima of Gaussian Random Fields \thanks{Research partially
supported by NIH grant R01-CA157528.}}
\author{Dan Cheng\\ North Carolina State University
 \and Armin Schwartzman \\ North Carolina State University }

\maketitle

\begin{abstract}
Let $\{f(t): t\in T\}$ be a smooth Gaussian random field over a parameter space $T$, where $T$ may be a subset of Euclidean space or, more generally, a Riemannian manifold. We provide a general formula for the distribution of the height of a local maximum $\P\{f(t_0)>u | t_0 \text{ is a local maximum of } f(t) \}$ when $f$ is non-stationary. Moreover, we establish asymptotic approximations for the overshoot distribution of a local maximum $\P\{f(t_0)>u+v | t_0 \text{ is a local maximum of } f(t) \text{ and } f(t_0)>v\}$ as $v\to \infty$. Assuming further that $f$ is isotropic, we apply techniques from random matrix theory related to the Gaussian orthogonal ensemble to compute such conditional probabilities explicitly when $T$ is Euclidean or a sphere of arbitrary dimension. Such calculations are motivated by the statistical problem of detecting peaks in the presence of smooth Gaussian noise.
\end{abstract}

{\bf Keywords:} Height; overshoot; local maxima; Riemannian manifold; Gaussian orthogonal ensemble; isotropic field; Euler characteristic; sphere.

\section{Introduction}
In certain statistical applications such as peak detection problems [cf. Schwartzman et al. (2011) and Cheng and Schwartzman (2014)], we are interested in the tail distribution of the height of a local maximum of a Gaussian random field. This is defined as the probability that the height of the local maximum exceeds a fixed threshold at that point, conditioned on the event that the point is a local maximum of the field. Roughly speaking, such conditional probability can be stated as
\begin{equation}\label{Eq:Palm t0}
\P\{f(t_0)>u | t_0 \text{ is a local maximum of } f(t) \},
\end{equation}
where $\{f(t): t\in T\}$ is a smooth Gaussian random field parameterized on an $N$-dimensional set $T\subset\R^N$ whose interior is non-empty, $t_0\in \overset{\circ}{T}$ (the interior of $T$) and $u\in \R$. In peak detection problems, this distribution is useful in assessing the significance of local maxima as candidate peaks. In addition, such distribution has been of interest for describing fluctuations of the cosmic background in astronomy [cf. Bardeen et al. (1985) and Larson and Wandelt (2004)] and describing the height of sea waves in oceanography [cf. Longuet-Higgins (1952, 1980), Lindgren (1982) and Sobey (1992)].

As written, the conditioning event in (\ref{Eq:Palm t0}) has zero probability. To make the conditional probability well-defined mathematically, we follow the original approach of Cramer and Leadbetter (1967) for smooth stationary Gaussian process in one dimension, and adopt instead the definition
\begin{equation}\label{Eq:Palm Ut0}
F_{t_0}(u) :=\lim_{\ep\to 0} \P\{f(t_0)>u | \exists \text{ a local maximum of } f(t) \text{ in } U_{t_0}(\ep) \},
\end{equation}
if the limit on the right hand side exists, where $U_{t_0}(\ep)=t_0\oplus(-\ep/2,\ep/2)^N$ is the $N$-dimensional open cube of side $\ep$ centered at $t_0$. We call (\ref{Eq:Palm Ut0}) the distribution of the height of a local maximum of the random field.

Because this distribution is conditional on a point process, which is the set of local maxima of $f$, it falls under the general category of Palm distributions [cf. Adler et al. (2012) and Schneider and Weil (2008)]. Evaluating this distribution analytically has been known to be a difficult problem for decades. The only known results go back to Cramer and Leadbetter (1967) who provided an explicit expression for one-dimensional stationary Gaussian processes, and Belyaev (1967, 1972) and Lindgren (1972) who gave an implicit expression for stationary Gaussian fields over Euclidean space.

As a first contribution, in this paper we provide general formulae for \eqref{Eq:Palm Ut0} for non-stationary Gaussian fields and $T$ being a subset of Euclidean space or a Riemannian manifold of arbitrary dimension. As opposed to the well-studied global supremum of the field, these formulae only depend on local properties of the field. Thus, in principle, stationarity and ergodicity are not required, nor is knowledge of the global geometry or topology of the set in which the random field is defined. The caveat is that our formulae involve the expected number of local maxima (albeit within a small neighborhood of $t_0$), so actual computation becomes hard for most Gaussian fields except, as described below, for isotropic cases.

We also investigate the overshoot distribution of a local maximum, which can be roughly stated as
\begin{equation}\label{Eq:overshoot t0}
\P\{f(t_0)>u+v | t_0 \text{ is a local maximum of } f(t) \text{ and } f(t_0)>v \},
\end{equation}
where $u>0$ and $v \in \R$. The motivation for this distribution in peak detection is that, since local maxima representing candidate peaks are called significant if they are sufficiently high, it is enough to consider peaks that are already higher than a pre-threshold $v$. As before, since the conditioning event in (\ref{Eq:overshoot t0}) has zero probability, we adopt instead the formal definition
\begin{equation}\label{Eq:overshoot Ut0}
\begin{split}
\bar{F}_{t_0}(u, v) :=\lim_{\ep\to 0} \P\{f(t_0)>u+v | \exists \text{ a local maximum of } f(t) \text{ in } U_{t_0}(\ep) \text{ and } f(t_0)>v\},
\end{split}
\end{equation}
if the limit on the right hand side exists. It turns out that, when the pre-threshold $v$ is high, a simple asymptotic approximation to (\ref{Eq:overshoot Ut0}) can be found because in that case, the expected number of local maxima can be approximated by a simple expression similar to the expected Euler characteristic of the excursion set above level $v$ [cf. Adler and Taylor (2007)].

The appeal of the overshoot distribution had already been realized by Belyaev (1967, 1972), Nosko (1969, 1970a, 1970b) and Adler (1981), who showed that, in stationary case, it is asymptotically equivalent to an exponential distribution. In this paper we give a much tighter approximation to the overshoot distribution which, again, depends only on local properties of the field and thus, in principle, does not require stationarity nor ergodicity. However, stationarity does enable obtaining an explicit closed-form approximation such that the error is super-exponentially small. 
In addition, the limiting distribution has the appealing property that it does not depend on the correlation function of the field, so these parameters need not be estimated in statistical applications.

As a third contribution, we extend the Euclidean results mentioned above for both (\ref{Eq:Palm Ut0}) and (\ref{Eq:overshoot Ut0}) to Gaussian fields over Riemannian manifolds. The extension is not difficult once it is realized that, because all calculations are local, it is essentially enough to change the local geometry of Euclidean space by the local geometry of the manifold and most arguments in the proofs can be easily changed accordingly. 

As a fourth contribution, we obtain exact (non-asymptotic) closed-form expressions for isotropic fields, both on Euclidean space and the $N$-dimensional sphere. This is achieved by means of an interesting recent technique employed in Euclidean space by Fyodorov (2004), Aza\"is and Wschebor (2008) and Auffinger (2011) involving random matrix theory. The method is based on the realization that the (conditional) distribution of the Hessian $\nabla^2 f$ of an isotropic Gaussian field $f$ is closely related to that of a Gaussian Orthogonal Ensemble (GOE) random matrix. Hence, the known distribution of the eigenvalues of a GOE is used to compute explicitly the expected number of local maxima required in our general formulae described above.
As an example, we show the detailed calculation for isotropic Gaussian fields on $\R^2$. Furthermore, by extending the GOE technique to the $N$-dimensional sphere, we are able to provide explicit closed-form expressions on that domain as well, showing the two-dimensional sphere as a specific example.

The paper is organized as follows. In Section \ref{Section:general Euclidean}, we provide general formulae for both the distribution and the overshoot distribution of the height of local maxima for smooth Gaussian fields on Euclidean space. The explicit formulae for isotropic Gaussian fields are then obtained  by techniques from random matrix theory. Based on the Euclidean case, the results are then generalized to Gaussian fields over Riemannian manifolds in Section \ref{section:general manifolds}, where we also study isotropic Gaussian fields on the sphere. Lastly, Section \ref{Section:proofs of main results} contains the proofs of main theorems as well as some auxiliary results.

\section{Smooth Gaussian Random Fields on Euclidean Space}\label{Section:general Euclidean}
\subsection{Height Distribution and Overshoot Distribution of Local Maxima}
Let $\{f(t): t\in T\}$ be a real-valued, $C^2$ Gaussian random field parameterized on an $N$-dimensional set $T\subset\R^N$ whose interior is non-empty. Let
\begin{equation*}
\begin{split}
f_i(t)&=\frac{\partial f(t)}{\partial t_i}, \quad \nabla f(t)= (f_1(t), \ldots, f_N(t))^T, \quad \La(t)={\rm Cov}(\nabla f(t)),\\
f_{ij}(t)&=\frac{\partial^2 f(t)}{\partial t_it_j}, \quad \nabla^2 f(t)= (f_{ij}(t))_{1\le i, j\le N},
\end{split}
\end{equation*}
and denote by ${\rm index}(\nabla^2 f(t))$ the number of negative eigenvalues of $\nabla^2 f(t)$. We will make use of the following conditions.
\begin{itemize}
\item[({\bf C}1).] $f \in C^2(T)$ almost surely and its second derivatives satisfy the
\emph{mean-square H\"older condition}: for any $t_0\in T$, there exist positive constants $L$, $\eta$ and $\delta$ such that
\begin{equation*}
\E(f_{ij}(t)-f_{ij}(s))^2 \leq L^2 \|t-s\|^{2\eta}, \quad \forall t,s\in U_{t_0}(\delta),\ i, j= 1, \ldots, N.
\end{equation*}

\item[({\bf C}2).]  For every pair $(t, s)\in T^2$ with $t\neq s$, the Gaussian random vector
$$(f(t), \nabla f(t), f_{ij}(t),\,
 f(s), \nabla f(s), f_{ij}(s), 1\leq i\leq j\leq N)$$
is  non-degenerate.
\end{itemize}
Note that $({\bf C}1)$ holds when $f\in C^3(T)$ and $T$ is closed and bounded.

The following theorem, whose proof is given in Section \ref{Section:proofs of main results}, provides the formula for $F_{t_0}(u)$ defined in (\ref{Eq:Palm Ut0}) for smooth Gaussian fields over $\R^N$.
\begin{theorem}\label{Thm:Palm distr} Let $\{f(t): t\in T\}$ be a Gaussian random field satisfying $({\bf C}1)$ and $({\bf C}2)$. Then for each $t_0\in \overset{\circ}{T}$ and $u\in \R$,
\begin{equation}\label{Eq:Palm distr Euclidean}
F_{t_0}(u)= \frac{\E\{|{\rm det} \nabla^2 f(t_0)|\mathbbm{1}_{\{f(t_0)> u\}} \mathbbm{1}_{\{{\rm index}(\nabla^2 f(t_0))=N\}}|\nabla f(t_0)=0\}}{\E\{|{\rm det} \nabla^2 f(t_0)|\mathbbm{1}_{\{{\rm index}(\nabla^2 f(t_0))=N\}} | \nabla f(t_0)=0\}}.
\end{equation}
\end{theorem}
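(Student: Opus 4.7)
The strategy is to write the conditional probability as the ratio
\[
\P\{f(t_0)>u \mid \exists \text{ local max in } U_{t_0}(\ep)\} = \frac{\P\{\tilde M_\ep \geq 1\}}{\P\{M_\ep\geq 1\}},
\]
where $M_\ep := \#\{t\in U_{t_0}(\ep):\nabla f(t)=0,\,\mathrm{index}(\nabla^2 f(t))=N\}$ counts the local maxima in $U_{t_0}(\ep)$, and $\tilde M_\ep := M_\ep\,\mathbbm{1}_{\{f(t_0)>u\}}$. Note that $\{f(t_0)>u,\,M_\ep\geq 1\}$ coincides with $\{\tilde M_\ep\geq 1\}$ because the indicator acts uniformly on the entire configuration. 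The proof then reduces to two sub-tasks: (i) computing $\E[M_\ep]$ and $\E[\tilde M_\ep]$ by the Kac--Rice formula; (ii) showing that $\P\{M_\ep\geq 1\} = \E[M_\ep] + o(\ep^N)$, and similarly for $\tilde M_\ep$, so that these expectations control the probabilities in the limit.

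For (i), the Kac--Rice formula (whose hypotheses are built into (C1) and (C2); cf.\ Adler and Taylor (2007), Theorem 11.2.1) yields
\[
\E[\tilde M_\ep] = \int_{U_{t_0}(\ep)} \E\big\{|\det\nabla^2 f(t)|\,\mathbbm{1}_{\{f(t_0)>u\}}\mathbbm{1}_{\{\mathrm{index}(\nabla^2 f(t))=N\}}\,\big|\,\nabla f(t)=0\big\}\, p_{\nabla f(t)}(0)\,dt.
\]
Joint Gaussian regression together with (C2) implies that the integrand depends continuously on $t$ in a neighbourhood of $t_0$, so as $\ep\to 0$ it can be replaced by its value at $t_0$, giving $\E[\tilde M_\ep] = \ep^N p_{\nabla f(t_0)}(0)\,A(u) + o(\ep^N)$, where $A(u)$ denotes the numerator of \eqref{Eq:Palm distr Euclidean}. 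Removing the indicator $\mathbbm{1}_{\{f(t_0)>u\}}$ (equivalently setting $u=-\infty$) produces the analogous expansion for $\E[M_\ep]$, whose leading constant is the denominator of \eqref{Eq:Palm distr Euclidean}.

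For (ii), I would use the Bonferroni bound
\[
\E[M_\ep]-\tfrac12\E[M_\ep(M_\ep-1)] \leq \P\{M_\ep\geq 1\} \leq \E[M_\ep],
\]
and the Kac--Rice formula for the second factorial moment rewrites $\E[M_\ep(M_\ep-1)]$ as a double integral over $(t,s)\in U_{t_0}(\ep)^2$ of $p_{\nabla f(t),\nabla f(s)}(0,0)$ times a conditional expectation of $|\det\nabla^2 f(t)||\det\nabla^2 f(s)|$. The main obstacle is that this joint density diverges like $c\|s-t\|^{-N}$ as $s\to t$ because the covariance matrix of $(\nabla f(t),\nabla f(s))$ becomes singular on the diagonal. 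The standard remedy is to change variables to $(\nabla f(t),\,[\nabla f(s)-\nabla f(t)]/\|s-t\|)$; under (C2) this pair remains non-degenerate uniformly up to the diagonal, and a Taylor expansion (controlled by the H\"older estimate in (C1)) shows that the corresponding conditional expectation of the Hessian-determinant product vanishes like $\|s-t\|^N$. The two effects cancel, the integrand is uniformly bounded, and therefore $\E[M_\ep(M_\ep-1)] = O(\ep^{2N}) = o(\ep^N)$. Combining (i) and (ii), the factors $p_{\nabla f(t_0)}(0)$ in numerator and denominator cancel in the ratio, recovering \eqref{Eq:Palm distr Euclidean}.
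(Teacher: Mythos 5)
Your overall architecture coincides with the paper's proof: write $F_{t_0}(u)$ as a ratio of two probabilities, replace each probability by the corresponding expectation up to $o(\ep^N)$ via a bound on the second factorial moment, and evaluate the expectations by the Kac--Rice formula plus continuity of the integrand at $t_0$. One sub-step is genuinely different and in fact cleaner than the paper's: by weighting the count with $\mathbbm{1}_{\{f(t_0)>u\}}$, a mark attached to the \emph{fixed} point $t_0$, you get $\{f(t_0)>u,\,M_\ep\ge 1\}=\{\tilde M_\ep\ge 1\}$ exactly, and the Kac--Rice metatheorem applies directly to $\tilde M_\ep$. The paper instead compares $\{f(t_0)>u,\ \exists\text{ a local max}\}$ with the event that some local maximum exceeds $u$, and spends roughly a page showing the symmetric difference has probability $o(\ep^N)$; your route avoids that comparison entirely. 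You should still record why the integrand $\E\{|\det\nabla^2 f(t)|\mathbbm{1}_{\{f(t_0)>u\}}\mathbbm{1}_{\{{\rm index}(\nabla^2 f(t))=N\}}\mid\nabla f(t)=0\}\,p_{\nabla f(t)}(0)$ is continuous at $t=t_0$ (the joint law of $(f(t_0),\nabla f(t),\nabla^2 f(t))$ stays non-degenerate near $t_0$ and the conditional law of $f(t_0)$ has no atom at $u$), but that is routine.

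The one place where your write-up is quantitatively wrong is the factorial-moment estimate. Conditioning on $\nabla f(t)=\nabla f(s)=0$ only forces each Hessian to (nearly) annihilate the single direction $e_{t,s}=(s-t)/\|s-t\|$: under $({\bf C}1)$ the Taylor expansion gives $\nabla^2 f(t)e_{t,s}=O(\|s-t\|^{\eta})$, so each determinant is $O(\|s-t\|^{\eta})$ and the conditional expectation of the product is $O(\|s-t\|^{2\eta})$ (the paper extracts only one such factor, $O(\|s-t\|^{\eta})$, which already suffices) --- not $O(\|s-t\|^{N})$. Consequently the two-point integrand behaves like $\|s-t\|^{2\eta-N}$, which is unbounded for $N\ge 3$ (and for $N=2$ when $\eta<1$), and one obtains $\E[M_\ep(M_\ep-1)]=O(\ep^{N+2\eta})$ rather than $O(\ep^{2N})$. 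This does not damage the proof, since $O(\ep^{N+\eta})$ is already $o(\ep^N)$, which is all you use; but the claims that the conditional expectation ``vanishes like $\|s-t\|^N$,'' that ``the integrand is uniformly bounded,'' and that the factorial moment is $O(\ep^{2N})$ are incorrect as stated and should be replaced by the weaker, correct rates.
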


The implicit formula in (\ref{Eq:Palm distr Euclidean}) generalizes the results for stationary Gaussian fields in Cram\'er and Leadbetter (1967, p. 243), Belyaev (1967, 1972) and Lindgren (1972) in the sense that stationarity is no longer required.

Note that the conditional expectations in (\ref{Eq:Palm distr Euclidean}) are hard to compute, since they involve the indicator functions on the eigenvalues of a random matrix. However, in Section \ref{Section:isotropic Euclidean} and Section \ref{Section:isotropic sphere} below, we show that (\ref{Eq:Palm distr Euclidean}) can be computed explicitly for isotropic Gaussian fields.

The following result shows the exact formula for the overshoot distribution defined in (\ref{Eq:overshoot Ut0}).
\begin{theorem}\label{Thm:overshoot distr} Let $\{f(t): t\in T\}$ be a Gaussian random field satisfying $({\bf C}1)$ and $({\bf C}2)$. Then for each $t_0\in \overset{\circ}{T}$, $v\in \R$ and $u>0$,
\begin{equation*}
\begin{split}
\bar{F}_{t_0}(u,v)= \frac{\E\{|{\rm det} \nabla^2 f(t_0)|\mathbbm{1}_{\{f(t_0)> u+v\}} \mathbbm{1}_{\{{\rm index}(\nabla^2 f(t_0))=N\}}|\nabla f(t_0)=0\}}{\E\{|{\rm det} \nabla^2 f(t_0)|\mathbbm{1}_{\{f(t_0)> v\}} \mathbbm{1}_{\{{\rm index}(\nabla^2 f(t_0))=N\}}|\nabla f(t_0)=0\}}.
\end{split}
\end{equation*}
\end{theorem}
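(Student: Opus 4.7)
The plan is to mimic the proof of Theorem \ref{Thm:Palm distr}, inserting the extra indicator $\mathbbm{1}_{\{f(t_0)>v\}}$ into the denominator and adjusting the argument accordingly. First, because $u>0$ the inclusion $\{f(t_0)>u+v\}\subset \{f(t_0)>v\}$ holds, so the definition (\ref{Eq:overshoot Ut0}) reduces to
\begin{equation*}
\bar F_{t_0}(u,v) = \lim_{\ep\to 0} \frac{\P\{f(t_0)>u+v,\ \exists\text{ local max of } f \text{ in } U_{t_0}(\ep)\}}{\P\{f(t_0)>v,\ \exists\text{ local max of } f \text{ in } U_{t_0}(\ep)\}}.
\end{equation*}
Thus it suffices to find an asymptotic expansion as $\ep\to 0$ of the joint probability $\P\{f(t_0)>c,\ N_{t_0,\ep}\geq 1\}$ for arbitrary $c\in \R$, where $N_{t_0,\ep}$ is the number of local maxima of $f$ in $U_{t_0}(\ep)$, and then to form the ratio with $c=u+v$ and $c=v$.

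For arbitrary $c$ I would argue exactly as in Theorem \ref{Thm:Palm distr}. Under the non-degeneracy assumption (\textbf{C}2) a two-point Kac-Rice bound gives $\E[N_{t_0,\ep}(N_{t_0,\ep}-1)]=o(\ep^N)$, so
\begin{equation*}
\P\{f(t_0)>c,\ N_{t_0,\ep}\geq 1\} = \E\bigl[\mathbbm{1}_{\{f(t_0)>c\}}\, N_{t_0,\ep}\bigr]+o(\ep^N).
\end{equation*}
The marked Kac-Rice formula expresses the first-moment term as
\begin{equation*}
\E\bigl[\mathbbm{1}_{\{f(t_0)>c\}} N_{t_0,\ep}\bigr] = \int_{U_{t_0}(\ep)} \E\bigl[|\det \nabla^2 f(t)|\, \mathbbm{1}_{\{f(t_0)>c\}} \mathbbm{1}_{\{{\rm index}(\nabla^2 f(t))=N\}} \,\big|\, \nabla f(t)=0\bigr] p_{\nabla f(t)}(0)\, dt,
\end{equation*}
and the Hölder regularity (\textbf{C}1), combined with the continuity of the conditional Gaussian densities under (\textbf{C}2), lets me identify the limit of the integrand at $t=t_0$. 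Consequently,
\begin{equation*}
\P\{f(t_0)>c,\ N_{t_0,\ep}\geq 1\} = \ep^N p_{\nabla f(t_0)}(0)\, \E\bigl[|\det \nabla^2 f(t_0)|\, \mathbbm{1}_{\{f(t_0)>c\}} \mathbbm{1}_{\{{\rm index}(\nabla^2 f(t_0))=N\}} \,\big|\, \nabla f(t_0)=0\bigr] + o(\ep^N).
\end{equation*}
Substituting $c=u+v$ in the numerator and $c=v$ in the denominator of the ratio above, the common factors $\ep^N\,p_{\nabla f(t_0)}(0)$ cancel in the limit and deliver exactly the formula claimed in Theorem \ref{Thm:overshoot distr}.

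The main obstacle is the same as in Theorem \ref{Thm:Palm distr}: one must (i) control $\E[N_{t_0,\ep}(N_{t_0,\ep}-1)]$ by a two-point Kac-Rice estimate using the joint non-degeneracy (\textbf{C}2), and (ii) justify the continuity in $t$ of the Kac-Rice integrand despite the discontinuous indicator on $\{{\rm index}(\nabla^2 f(t))=N\}$, which follows from (\textbf{C}2) because conditionally on $\nabla f(t)=0$ the Hessian $\nabla^2 f(t)$ has a continuous density, so eigenvalue crossings through zero contribute only on a null set. The only structural novelty relative to Theorem \ref{Thm:Palm distr} is the extra indicator $\mathbbm{1}_{\{f(t_0)>v\}}$ in the denominator; since this indicator depends only on $f(t_0)$ and not on the integration variable $t$, it introduces no new continuity issue, and the whole argument proceeds in perfect parallel with the proof of Theorem \ref{Thm:Palm distr}.
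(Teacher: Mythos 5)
Your proposal is correct and follows essentially the same route as the paper, which simply invokes ``similar arguments'' to those proving Theorem \ref{Thm:Palm distr}: the factorial-moment bound of Lemma \ref{Lem:Piterbarg} to replace the probabilities $\P\{\cdot\geq 1\}$ by first moments up to $o(\ep^N)$, then the Kac--Rice formula and continuity of the integrand to localize at $t_0$, with the common factor $\ep^N p_{\nabla f(t_0)}(0)$ cancelling in the ratio. The one small deviation is that you carry the indicator $\mathbbm{1}_{\{f(t_0)>c\}}$ as a mark anchored at the fixed point $t_0$ throughout, rather than first trading the event $\{f(t_0)>c,\ \mu_N(t_0,\ep)\geq 1\}$ for $\{\mu_N^{c}(t_0,\ep)\geq 1\}$ as in the paper's step (\ref{Eq:difference in numerator}); this spares you that $o(\ep^N)$ event-comparison argument and is, if anything, slightly cleaner, at the cost of invoking the marked Kac--Rice formula with a mark depending on $f(t_0)$ --- a use the paper itself makes in (\ref{Eq:conditional expectation of mu}).
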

\begin{proof}\
The result follows from similar arguments for proving Theorem \ref{Thm:Palm distr}.
\end{proof}

The advantage of overshoot distribution is that we can explore the asymptotics as the pre-threshold $v$ gets large. Theorem \ref{Thm:Palm distr high level} below, whose proof is given in Section \ref{Section:proofs of main results}, provides an asymptotic approximation to the overshoot distribution of a smooth Gaussian field over $\R^N$. This approximation is based on the fact that as the exceeding level tends to infinity, the expected number of local maxima can be approximated by a simpler form which is similar to the expected Euler characteristic of the excursion set.
\begin{theorem}\label{Thm:Palm distr high level} Let $\{f(t): t\in T\}$ be a centered, unit-variance Gaussian random field satisfying $({\bf C}1)$ and $({\bf C}2)$. Then for each $t_0\in \overset{\circ}{T}$ and each fixed $u>0$, there exists $\alpha>0$ such that as $v\to \infty$,
\begin{equation}\label{Eq:Palm distr high level}
\bar{F}_{t_0}(u,v)= \frac{\int_{u+v}^\infty \phi(x) \E\{{\rm det} \nabla^2 f(t_0)|f(t_0)=x, \nabla f(t_0)=0\}dx}{\int_v^\infty \phi(x) \E\{{\rm det} \nabla^2 f(t_0)|f(t_0)=x, \nabla f(t_0)=0\}dx}(1+o(e^{-\alpha v^2})).
\end{equation}
Here and in the sequel, $\phi(x)$ denotes the standard Gaussian density.
\end{theorem}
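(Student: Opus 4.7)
The plan is to begin with the exact formula of Theorem~\ref{Thm:overshoot distr} and show that each of its two expectations coincides, up to a super-exponentially small multiplicative correction, with the corresponding integral appearing in \eqref{Eq:Palm distr high level}. Since $\E f(t)^2\equiv 1$, differentiating once gives $\E[f(t_0)f_i(t_0)]=0$, so $f(t_0)$ and $\nabla f(t_0)$ are independent and the conditional density of $f(t_0)$ given $\nabla f(t_0)=0$ is $\phi$. Conditioning additionally on $f(t_0)=x$ inside the numerator and denominator of Theorem~\ref{Thm:overshoot distr} rewrites each as an integral $\int_\cdot^\infty \phi(x)J(x)\,dx$ with
$$J(x)=\E\bigl\{|{\rm det}\nabla^2 f(t_0)|\mathbbm{1}_{\{{\rm index}(\nabla^2 f(t_0))=N\}}\bigm| f(t_0)=x,\nabla f(t_0)=0\bigr\}.$$
Using $|{\rm det} M|\mathbbm{1}_{\{{\rm index}(M)=N\}}=(-1)^N{\rm det} M\cdot\mathbbm{1}_{\{{\rm index}(M)=N\}}$, I would split $J(x)=(-1)^N\E\{{\rm det}\nabla^2 f(t_0)\mid f(t_0)=x,\nabla f(t_0)=0\}+R(x)$, where $R(x)=-(-1)^N\E\{{\rm det}\nabla^2 f(t_0)\mathbbm{1}_{\{{\rm index}<N\}}\mid f(t_0)=x,\nabla f(t_0)=0\}$.

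Next I would compute the conditional law of $\nabla^2 f(t_0)$. Differentiating $\E f(t)^2\equiv 1$ twice yields $\E[f(t_0)f_{ij}(t_0)]=-\La_{ij}(t_0)$, and combined with ${\rm Cov}(f(t_0),\nabla f(t_0))=0$ the Gaussian regression formula gives
$$\E[\nabla^2 f(t_0)\mid f(t_0)=x,\nabla f(t_0)=0]=-x\,\La(t_0),$$
with a conditional covariance independent of $x$. Hence $\nabla^2 f(t_0)\mid(f(t_0)=x,\nabla f(t_0)=0)$ is distributed as $-x\La(t_0)+W$, where $W$ is a centered Gaussian random matrix with fixed law. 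Because $\La(t_0)$ is positive definite, the matrix $-x\La(t_0)+W$ is negative definite on the event $\{\|W\|_{\rm op}<x\lambda_{\min}(\La(t_0))\}$, whose complement has Gaussian probability $O(e^{-\alpha_0 x^2})$ for some $\alpha_0>0$.

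Cauchy--Schwarz then bounds
$$|R(x)|\le\bigl(\E\{({\rm det}\nabla^2 f(t_0))^2\mid f(t_0)=x,\nabla f(t_0)=0\}\bigr)^{1/2}\bigl(\P\{{\rm index}<N\mid\cdot\}\bigr)^{1/2},$$
in which the first factor is polynomial in $x$ (the determinant is a degree-$N$ polynomial in Hessian entries whose conditional means are linear in $x$) while the second is $O(e^{-\alpha_0 x^2/2})$. The same regression identity also gives $(-1)^N\E\{{\rm det}\nabla^2 f(t_0)\mid f(t_0)=x,\nabla f(t_0)=0\}\sim x^N\det\La(t_0)$ as $x\to\infty$, so that the main integral from $v$ to $\infty$ is bounded below by $c\,\phi(v)v^{N-1}$, whereas $\int_v^\infty \phi(x)|R(x)|\,dx$ is at most $e^{-(1/2+\alpha_0/2)v^2}$ up to polynomial prefactors. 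The resulting relative error is thus $o(e^{-\alpha v^2})$ for any $\alpha<\alpha_0/2$. Applying this comparison to both the numerator (with lower limit $u+v$) and the denominator (with lower limit $v$) of $\bar{F}_{t_0}(u,v)$, the $(-1)^N$ factors cancel and \eqref{Eq:Palm distr high level} follows.

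The main obstacle is to rigorously establish $\P\{{\rm index}(-x\La(t_0)+W)<N\}=O(e^{-\alpha_0 x^2})$ together with the polynomial-in-$x$ control of the conditional second moment of ${\rm det}\nabla^2 f(t_0)$. The former reduces to a Gaussian operator-norm deviation inequality for $\La(t_0)^{-1/2}W\La(t_0)^{-1/2}$, while the latter follows from expanding the determinant into a signed sum of products of Hessian entries, each of which is conditionally Gaussian with mean linear in $x$ and bounded variance. Once these quantitative tail bounds are in hand, the Laplace-type comparison of the error and main-term integrals is routine.
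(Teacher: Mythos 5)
Your proposal is correct and follows essentially the same route as the paper's proof: starting from Theorem~\ref{Thm:overshoot distr}, using the Gaussian regression $\E[\nabla^2 f(t_0)\mid f(t_0)=x,\nabla f(t_0)=0]=-x\La(t_0)$ with $x$-independent conditional covariance, and showing that dropping the index indicator costs only $o(e^{-\alpha x^2})$ because the conditioned Hessian is negative definite off an event of Gaussian-small probability. The only (immaterial) difference is that you control the error term via Cauchy--Schwarz against the conditional second moment of the determinant, whereas the paper bounds the corresponding integral directly using the Gaussian decay of the density of the recentered Hessian entries against the polynomial growth of the determinant.
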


Note that the expectation in \eqref{Eq:Palm distr high level} is computable since the indicator function does not exist anymore. However, for non-stationary Gaussian random fields over $\R^N$ with $N\ge 2$, the general expression of the expectation in \eqref{Eq:Palm distr high level} would be complicated. Fortunately, as a polynomial in $x$, the coefficient of the highest order of the expectation above is relatively simple, see Lemma \ref{Lem:conditional expectation for N} below. This gives the following approximation to the overshoot distribution for general smooth Gaussian fields over $\R^N$.

\begin{corollary}\label{Cor:Palm distr high level o(1)} Let the assumptions in Theorem \ref{Thm:Palm distr high level} hold. Then for each $t_0\in \overset{\circ}{T}$ and each fixed $u>0$, as $v\to \infty$,
\begin{equation}\label{eq:Palm distr high level o(1)}
\bar{F}_{t_0}(u,v)= \frac{(u+v)^{N-1}e^{-(u+v)^2/2}}{v^{N-1}e^{-v^2/2}}(1+O(v^{-2})).
\end{equation}
\end{corollary}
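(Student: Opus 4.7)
The plan is to apply Theorem \ref{Thm:Palm distr high level} directly and then estimate the two integrals in its ratio using standard Mills-type asymptotics, so that everything reduces to a clean polynomial prefactor.

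Write $g(x) := \E\{\det \nabla^2 f(t_0) \mid f(t_0)=x,\, \nabla f(t_0)=0\}$. The first step is to invoke the announced Lemma \ref{Lem:conditional expectation for N}, which expresses $g$ as a polynomial in $x$ of degree $N$ with a nonzero leading coefficient $c_N$ depending only on local second-moment data of $f$ at $t_0$. Thus $g(x) = c_N x^N + r(x)$ with $\deg r \le N-1$. The next ingredient is the one-sided Mills-ratio expansion: for any integer $k \ge 0$ and $w\to\infty$,
$$\int_w^\infty x^k \phi(x)\,dx = w^{k-1}\phi(w)\bigl(1 + O(w^{-2})\bigr).$$
This follows from a single integration by parts using $x\phi(x)\,dx = -d\phi(x)$, which gives the recursion $\int_w^\infty x^k\phi(x)\,dx = w^{k-1}\phi(w) + (k-1)\int_w^\infty x^{k-2}\phi(x)\,dx$, and then iterating once more to see that the remaining integral contributes at order $w^{k-3}\phi(w)$.

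Applying this expansion term by term to the polynomial $g(x)$, the contribution of $r(x)$ is at most of order $w^{N-2}\phi(w)$, so
$$\int_w^\infty \phi(x)\,g(x)\,dx \;=\; c_N\, w^{N-1}\phi(w)\bigl(1 + O(w^{-2})\bigr).$$
Inserting this with $w=u+v$ into the numerator and $w=v$ into the denominator of the formula in Theorem \ref{Thm:Palm distr high level}, the nonzero constant $c_N$ cancels, and the two $(1+O(\cdot^{-2}))$ factors combine multiplicatively into $1+O(v^{-2})$ because $u>0$ is fixed. Finally, the outer correction $1+o(e^{-\alpha v^2})$ produced by Theorem \ref{Thm:Palm distr high level} is absorbed in $1+O(v^{-2})$, since $e^{-\alpha v^2} = o(v^{-2})$, and \eqref{eq:Palm distr high level o(1)} follows.

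The main obstacle is essentially just the appeal to Lemma \ref{Lem:conditional expectation for N}: one needs to know that the conditional expectation $g(x)$ is a polynomial of degree \emph{exactly} $N$ with an explicitly nonzero leading coefficient, otherwise the polynomial prefactor $(u+v)^{N-1}/v^{N-1}$ would fail to emerge and the error term would degrade. Once that structural fact is in hand, the rest is a routine Laplace-type integration-by-parts bookkeeping, carried out uniformly for $v$ large since $u$ stays fixed.
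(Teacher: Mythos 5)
Your overall route is the same as the paper's: apply Theorem \ref{Thm:Palm distr high level}, control $g(x)=\E\{\det\nabla^2 f(t_0)\mid f(t_0)=x,\,\nabla f(t_0)=0\}$ via Lemma \ref{Lem:conditional expectation for N}, and finish with Mills-ratio asymptotics. However, there is a concrete quantitative gap in your bookkeeping. You decompose $g(x)=c_Nx^N+r(x)$ allowing $\deg r\le N-1$, and then claim that since the contribution of $r$ to $\int_w^\infty\phi(x)g(x)\,dx$ is $O(w^{N-2}\phi(w))$, the total equals $c_Nw^{N-1}\phi(w)\bigl(1+O(w^{-2})\bigr)$. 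That implication is false: $w^{N-2}\phi(w)$ is $w^{-1}$ times the leading term $c_Nw^{N-1}\phi(w)$, so your argument as written only yields a relative error $O(w^{-1})$, hence only $\bar{F}_{t_0}(u,v)=\frac{(u+v)^{N-1}e^{-(u+v)^2/2}}{v^{N-1}e^{-v^2/2}}(1+O(v^{-1}))$, which is strictly weaker than the corollary.

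The missing ingredient is the full strength of Lemma \ref{Lem:conditional expectation for N}, which you paraphrase too weakly. The lemma does not merely assert that $g$ is a degree-$N$ polynomial with nonzero leading coefficient; it asserts $g(x)=(-1)^N\det(\La(t_0))\,x^N\bigl(1+O(x^{-2})\bigr)$, which (since $g$ is a polynomial) forces the coefficient of $x^{N-1}$ to vanish. This vanishing comes from the fact that the conditional Hessian has mean $-x\La(t_0)$ plus a \emph{centered} Gaussian fluctuation, so in the Laplace expansion of the determinant the term $S_1$ (the sum of the first-order principal minors of the fluctuation) has zero expectation. With this, $\deg r\le N-2$, the contribution of $r$ is $O(w^{N-3}\phi(w))$, i.e.\ $O(w^{-2})$ relative to the leading term, and the remainder of your computation (cancelling $c_N$, absorbing the $o(e^{-\alpha v^2})$ factor, and replacing $O((u+v)^{-2})$ by $O(v^{-2})$ for fixed $u>0$) goes through exactly as you wrote it.
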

\begin{proof}\
The result follows immediately from Theorem \ref{Thm:Palm distr high level} and Lemma \ref{Lem:conditional expectation for N} below.
\end{proof}

It can be seen that the result in Corollary \ref{Cor:Palm distr high level o(1)} reduces to the exponential asymptotic distribution given by Belyaev (1967, 1972), Nosko (1969, 1970a, 1970b) and Adler (1981), but the result here gives the approximation error and does not require stationarity. Compared with (\ref{Eq:Palm distr high level}), (\ref{eq:Palm distr high level o(1)}) provides a less accurate approximation, since the error is only $O(v^{-2})$, but it provides a simple explicit form.

Next we show some cases where the approximation in (\ref{Eq:Palm distr high level}) becomes relatively simple and with the same degree of accuracy, i.e., the error is super-exponentially small.
\begin{corollary}\label{Cor:overshoot 1D} Let the assumptions in Theorem \ref{Thm:Palm distr high level} hold. Suppose further the dimension $N=1$ or the field $f$ is stationary,  then for each $t_0\in \overset{\circ}{T}$ and each fixed $u>0$, there exists $\alpha>0$ such that as $v\to \infty$,
\begin{equation}\label{Eq:Palm distr high level stationary}
\bar{F}_{t_0}(u,v)= \frac{H_{N-1}(u+v)e^{-(u+v)^2/2}}{H_{N-1}(v)e^{-v^2/2}}(1+o(e^{-\alpha v^2})),
\end{equation}
where $H_{N-1}(x)$ is the Hermite polynomial of order $N-1$.
\end{corollary}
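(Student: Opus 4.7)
The plan is to apply Theorem \ref{Thm:Palm distr high level} and then, under each of the two hypotheses, to evaluate the conditional expectation
$$
G(x) := \E\{\det\nabla^2 f(t_0)\mid f(t_0)=x,\ \nabla f(t_0)=0\}
$$
and show that it is an explicit scalar multiple of the Hermite polynomial $H_N$. Once this is established, the identity $\int_v^\infty \phi(x) H_N(x)\,dx = H_{N-1}(v)\phi(v)$, obtained by integrating $\frac{d}{dx}[H_{N-1}(x)\phi(x)] = -H_N(x)\phi(x)$, reduces the ratio in \eqref{Eq:Palm distr high level} to $H_{N-1}(u+v)\phi(u+v)/[H_{N-1}(v)\phi(v)]$; the common scalar and the $(2\pi)^{-1/2}$ factor in $\phi$ cancel between numerator and denominator, so that the right-hand side becomes $H_{N-1}(u+v)e^{-(u+v)^2/2}/[H_{N-1}(v)e^{-v^2/2}]$, and the multiplicative error $1+o(e^{-\alpha v^2})$ is inherited directly from Theorem \ref{Thm:Palm distr high level}.

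When $N=1$ no stationarity hypothesis is needed. Since $(f(t_0),f'(t_0),f''(t_0))$ is jointly centered Gaussian, the conditional mean $G(x)=\E[f''(t_0)\mid f(t_0)=x,\,f'(t_0)=0]$ is a linear combination of the conditioning values; with $f'(t_0)=0$ fixed, this collapses to $G(x)=cx=c\,H_1(x)$ for a constant $c$ depending only on the local covariances of $f$ at $t_0$.

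For stationary $f$ with $N\ge 2$, I would exploit two consequences of stationarity. First, evenness of the covariance function $C$ forces $\mathrm{Cov}(f_{ij}(t_0),f_k(t_0))$, a third partial derivative of $C$ at the origin, to vanish, so that $\nabla^2 f(t_0)$ is independent of $\nabla f(t_0)$ and the conditioning on $\nabla f(t_0)=0$ may be dropped. Second, conditional on $f(t_0)=x$ one has the decomposition $\nabla^2 f(t_0) = \xi - \Lambda x$, where $\xi$ is a centered Gaussian symmetric matrix (independent of $x$) with $\mathrm{Cov}(\xi_{ij},\xi_{kl}) = C_{ijkl}(0)-\Lambda_{ij}\Lambda_{kl}$; crucially, the tensor $C_{ijkl}(0)$ is symmetric under all permutations of its four indices. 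Expanding $\det(\xi-\Lambda x)$ as a signed sum over $\sigma\in S_N$ and applying Wick's theorem to each Gaussian factor, the full symmetry of $C_{ijkl}(0)$ causes the Wick pairings to combine with the $\mathrm{sgn}(\sigma)$-weighted $\Lambda$-factors precisely so as to produce the coefficients of the Hermite polynomial, giving the exact identity
$$
G(x) = (-1)^N \det(\Lambda)\, H_N(x).
$$

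The main obstacle is this last combinatorial identity in the stationary case: the claim that the Wick-and-permutation sum built from $\Lambda_{ij}$'s and from $C_{ijkl}(0)-\Lambda_{ij}\Lambda_{kl}$'s collapses, by virtue of the four-fold index symmetry alone, onto the coefficients $(-1)^{N+k}N!/(2^k k!(N-2k)!)$ of $H_N$. I would address it by first diagonalizing $\Lambda$ via an orthogonal coordinate change (which preserves stationarity and the symmetries of $C_{ijkl}(0)$), reducing to the case $\Lambda=I$, and then proceeding by induction on $N$: Laplace expansion of $\det(\xi-xI)$ along the last row, combined with the symmetries $C_{iijj}(0)=C_{ijij}(0)$ and their higher-order analogues, relates the $N$-dimensional identity to the $(N-2)$-dimensional one, with the combinatorial factor $N!/(2^k k!(N-2k)!)$ appearing naturally as the count of matchings of the unpaired indices. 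The base cases $N=2,3$ are short explicit Wick calculations that display the mechanism and anchor the induction.
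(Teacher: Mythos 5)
Your proposal is correct and follows essentially the same route as the paper: apply Theorem \ref{Thm:Palm distr high level}, show that $\E\{\det\nabla^2 f(t_0)\mid f(t_0)=x,\ \nabla f(t_0)=0\}$ is a constant multiple of $H_N(x)$ (a direct regression computation when $N=1$), and conclude via the identity $\int_v^\infty H_N(x)e^{-x^2/2}\,dx=H_{N-1}(v)e^{-v^2/2}$, with the constants cancelling in the ratio. The only divergence is that in the stationary case the paper simply cites Lemma 11.7.1 of Adler and Taylor (2007) for $\E\{\det\nabla^2 f(t_0)\mid f(t_0)=x,\nabla f(t_0)=0\}=(-1)^N\det(\Lambda(t_0))H_N(x)$, so your Wick-expansion-and-induction derivation of that identity, while plausible in outline, is reproving a known result rather than supplying a missing step.
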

\begin{proof}\ (i) Suppose first $N=1$. Since ${\rm Var}(f(t)) \equiv 1$, $\E \{f(t)f'(t)\}\equiv 0$ and $\E \{f''(t)f(t)\} = -{\rm Var}(f'(t)) = -\La(t)$. It follows that
\begin{equation*}
\begin{split}
\E &\{{\rm det} \nabla^2 f(t)| f(t)=x, \nabla f(t)=0\} = \E\{f''(t)| f(t)=x, f'(t)=0\}\\
&= (\E \{f''(t)f(t)\}, \E \{f''(t)f'(t)\}) \left( \begin{array}{cc}
1 & 0 \\
0 & \frac{1}{{\rm Var}(f'(t))} \\\end{array} \right) \left( \begin{array}{c}
x \\
0 \\\end{array} \right)= -\La(t) x.
\end{split}
\end{equation*}
Plugging this into (\ref{Eq:Palm distr high level}) yields the desired result.

(ii) If $f$ is stationary, it can be shown that [cf. Lemma 11.7.1 in Adler and Taylor (2007)],
$$
\E\{{\rm det} \nabla^2 f(t_0)| f(t_0)=x, \nabla f(t_0)=0\}=(-1)^N{\rm det}(\La(t_0)) H_N(x).
$$
Then (\ref{Eq:Palm distr high level stationary}) follows from Theorem \ref{Thm:Palm distr high level} and the following formula for Hermite polynomials
\begin{equation*}
\int_v^\infty H_N(x) e^{-x^2/2}\, dx = H_{N-1}(v) e^{-v^2/2}.
\end{equation*}
\end{proof}

An interesting property of the results obtained about the overshoot distribution is that the asymptotic approximations in Corollaries \ref{Cor:Palm distr high level o(1)} and \ref{Cor:overshoot 1D} do not depend on the location $t_0$, even in the case where stationarity is not assumed. In addition, they do not require any knowledge of spectral moments of $f$ except for zero mean and constant variance. In this sense, the distributions are convenient for use in statistics because the correlation function of the field need not be estimated.

\subsection{Isotropic Gaussian Random Fields on Euclidean Space}\label{Section:isotropic Euclidean}
We show here the explicit formulae for both the height distribution and the overshoot distribution of local maxima for isotropic Gaussian random fields. To our knowledge, this article is the first attempt to obtain these distributions explicitly for $N \ge 2$. The main tools are techniques from random matrix theory developed in Fyodorov (2004), Aza\"is and Wschebor (2008) and Auffinger (2011).

Let $\{f(t): t\in T\}$ be a real-valued, $C^2$, centered, unit-variance isotropic Gaussian field parameterized on an $N$-dimensional set $T\subset\R^N$. Due to isotropy, we can write the covariance function of the field as $\E\{f(t)f(s)\}=\rho(\|t-s\|^2)$ for an appropriate function $\rho(\cdot): [0,\infty) \rightarrow \R$, and denote
\begin{equation}\label{Eq:kappa}
\rho'=\rho'(0), \quad \rho''=\rho''(0), \quad \kappa=-\rho'/\sqrt{\rho''}.
\end{equation}
By isotropy again, the covariance of $(f(t), \nabla f(t), \nabla^2 f(t))$ only depends on $\rho'$ and $\rho''$, see Lemma \ref{Lem:cov of isotropic Euclidean} below. In particular, by Lemma \ref{Lem:cov of isotropic Euclidean}, we see that ${\rm Var}(f_i(t))=-2\rho'$ and ${\rm Var}(f_{ii}(t))=12\rho''$ for any $i\in\{1,\ldots, N\}$, which implies $\rho'<0$ and $\rho''>0$ and hence $\kappa>0$. We need the following condition for further discussions.
\begin{itemize}
\item[({\bf C}3).] $\kappa \le 1$ (or equivalently $\rho''-\rho'^2\ge 0$).
\end{itemize}

\begin{example}
\label{Example:rho}
Here are some examples of covariance functions with corresponding $\rho$ satisfying $({\bf C}3)$.

(i) Powered exponential: $\rho(r)=e^{-cr}$, where $c>0$. Then $\rho'=-c$, $\rho''=c^2$ and $\kappa=1$.

(ii) Cauchy: $\rho(r)=(1+r/c)^{-\beta}$, where $c>0$ and $\beta>0$. Then $\rho'=-\beta/c$, $\rho''=\beta(\beta+1)/c^2$ and $\kappa=\sqrt{\beta/(\beta+1)}$.
\end{example}

\begin{remark}\label{Remark:C3}
By Aza\"is and Wschebor (2010), $({\bf C}3)$ holds when $\rho(\|t-s\|^2)$, $t, s\in \R^N$, is a positive definite function for every dimension $N\ge 1$. The cases in Example \ref{Example:rho} are of this kind.
\end{remark}

We shall use \eqref{Eq:Palm distr Euclidean} to compute the distribution of the height of a local maximum. As mentioned before, the conditional distribution on the right hand side of \eqref{Eq:Palm distr Euclidean} is extremely hard to compute. In Section \ref{Section:proofs of main results} below, we build connection between such distribution and certain GOE matrix to make the computation available.

Recall that an $N\times N$ random matrix $M_N$ is said to have the Gaussian Orthogonal Ensemble (GOE) distribution if it is symmetric, with centered Gaussian entries $M_{ij}$ satisfying ${\rm Var}(M_{ii})=1$, ${\rm Var}(M_{ij})=1/2$ if $i<j$ and the random variables $\{M_{ij}, 1\leq i\leq j\leq N\}$ are independent. Moreover, the explicit formula for the distribution $Q_N$ of the eigenvalues $\la_i$ of $M_N$ is given by [cf. Auffinger (2011)]
\begin{equation}\label{Eq:GOE density}
Q_N(d\la)=\frac{1}{c_N} \prod_{i=1}^N e^{-\frac{1}{2}\la_i^2}d\la_i \prod_{1\leq i<j\leq N}|\la_i-\la_j|\mathbbm{1}_{\{\la_1\leq\ldots\leq\la_N\}},
\end{equation}
where the normalization constant $c_N$ can be computed from Selberg's integral
\begin{equation}\label{Eq:normalization constant}
c_N=\frac{1}{N!}(2\sqrt{2})^N \prod_{i=1}^N\Gamma\Big(1+\frac{i}{2}\Big).
\end{equation}
We use notation $\E_{GOE}^N$ to represent the expectation under density $Q_N(d\la)$, i.e., for a measurable function $g$,
$$
\E_{GOE}^N [g(\la_1, \ldots, \la_N)] = \int_{\la_1 \le \ldots \le \la_N} g(\la_1, \ldots, \la_N) Q_N(d\la).
$$

\begin{theorem}\label{Thm:Palm distr iso Euclidean} Let $\{f(t): t\in T\}$ be a centered, unit-variance, isotropic Gaussian random field satisfying $({\bf C}1)$, $({\bf C}2)$ and $({\bf C}3)$. Then for each $t_0\in \overset{\circ}{T}$ and $u\in \R$,
\begin{equation*}
\begin{split}
F_{t_0}(u)= \left\{
  \begin{array}{l l}
     \frac{(1-\kappa^2)^{-1/2} \int_u^\infty \phi(x)\E_{GOE}^{N+1}\left\{ \exp\left[\la_{N+1}^2/2 - (\la_{N+1}-\kappa x/\sqrt{2} )^2/(1-\kappa^2) \right]\right\}dx}{\E_{GOE}^{N+1}\left\{ \exp\left[-\la_{N+1}^2/2 \right] \right\}} & \quad \text{if $\kappa \in (0, 1)$},\\
     \frac{\int_u^\infty \phi(x)\E_{GOE}^{N}\left\{ \left(\prod_{i=1}^N|\la_i-x/\sqrt{2}|\right) \mathbbm{1}_{\{\la_N<x/\sqrt{2}\}} \right\}dx}{\sqrt{2/\pi}\Gamma\left(\frac{N+1}{2}\right)\E_{GOE}^{N+1}\left\{ \exp\left[-\la_{N+1}^2/2 \right] \right\}} & \quad \text{if $\kappa = 1$},
   \end{array} \right.
\end{split}
\end{equation*}
where $\kappa$ is defined in \eqref{Eq:kappa}.
\end{theorem}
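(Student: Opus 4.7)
The plan is to start from the general identity in Theorem~\ref{Thm:Palm distr} and reduce the conditional expectations of $|\det\nabla^2 f(t_0)|\mathbbm{1}_{\{\mathrm{index}=N\}}$ to tractable GOE averages. The first step is to use Lemma~\ref{Lem:cov of isotropic Euclidean} to write down the joint covariance of $(f(t_0),\nabla f(t_0),\nabla^2 f(t_0))$. Three isotropy facts are essential: $\nabla f(t_0)$ is independent of $(f(t_0),\nabla^2 f(t_0))$, so conditioning on $\nabla f(t_0)=0$ changes nothing and the $f(t_0)$-marginal under this conditioning is $\phi(x)\,dx$; the off-diagonal Hessian entries $f_{ij}(t_0)$, $i<j$, are i.i.d.\ $N(0,4\rho'')$ and independent of everything else; and the diagonal vector has covariance $8\rho''I+4\rho''J$ (with $J$ the all-ones matrix), yielding the decomposition $f_{ii}(t_0)=\sqrt{8\rho''}\,\xi_i+2\sqrt{\rho''}\,\zeta$ with $\xi_1,\ldots,\xi_N,\zeta$ i.i.d.\ $N(0,1)$, only $\zeta$ being correlated with $f(t_0)$ (with covariance $-\kappa$).

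Because $\kappa\le 1$ by $({\bf C}3)$, I can then write $\zeta=-\kappa f(t_0)+\sqrt{1-\kappa^2}\,\eta$ for some $\eta\sim N(0,1)$ independent of $f(t_0)$. A short computation yields the clean representation
\[
\frac{1}{\sqrt{8\rho''}}\nabla^2 f(t_0)\,\Big|\,\bigl(f(t_0)=x,\nabla f(t_0)=0\bigr)\;\stackrel{d}{=}\;M-\frac{\kappa x}{\sqrt{2}}I+\sqrt{\frac{1-\kappa^2}{2}}\,\eta\,I,
\]
where $M$ is a standard $N\times N$ GOE independent of $\eta$ (the $\eta$-term vanishing when $\kappa=1$). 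Substituting this into Theorem~\ref{Thm:Palm distr} and passing to the ordered eigenvalues $\lambda_1\le\cdots\le\lambda_N$ of $M$, the conditions $\{\nabla^2 f(t_0)\prec 0\}$ and $|\det\nabla^2 f(t_0)|$ become $\{\lambda_N<y\}$ and $\prod_i(y-\lambda_i)$ respectively, where $y:=\kappa x/\sqrt{2}-\sqrt{(1-\kappa^2)/2}\,\eta$. The common factor $(8\rho'')^{N/2}$ will cancel in the ratio, which is why no $\rho',\rho''$-dependence survives in the final formula beyond $\kappa$.

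The central identity I intend to invoke is
\[
\E_{GOE}^N\Big\{\prod_{i=1}^N(y-\lambda_i)\mathbbm{1}_{\{\lambda_N<y\}}\Big\}=\frac{c_{N+1}}{c_N}\,e^{y^2/2}\,p_{\lambda_{N+1}}(y),
\]
with $p_{\lambda_{N+1}}$ the marginal density of the largest eigenvalue of an $(N+1)\times(N+1)$ GOE; this is immediate from \eqref{Eq:GOE density} by recognising $\prod_{i=1}^N(y-\lambda_i)\prod_{i<j\le N}|\lambda_i-\lambda_j|=\prod_{i<j\le N+1}|\lambda_i-\lambda_j|$ upon setting $\lambda_{N+1}=y$. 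For $\kappa\in(0,1)$, I would average over $\eta$ first (a Gaussian convolution that produces the factor $(\pi(1-\kappa^2))^{-1/2}\exp[-(\lambda_{N+1}-\kappa x/\sqrt{2})^2/(1-\kappa^2)]$ when combined with $p_{\lambda_{N+1}}$), multiply by $\phi(x)$, and complete the square in $-x^2/2-(y-\kappa x/\sqrt{2})^2/(1-\kappa^2)$ to evaluate the denominator's $x$-integral in closed form; this collapses the denominator to a multiple of $\E_{GOE}^{N+1}\{e^{-\lambda_{N+1}^2/2}\}$ and leaves the numerator in the stated form. For $\kappa=1$ there is no $\eta$-average, so the numerator is already the claimed $\E_{GOE}^N$ expression at $y=x/\sqrt{2}$; the denominator is then simplified by the substitution $y=x/\sqrt{2}$ together with the explicit ratio $c_{N+1}/c_N=\sqrt{2}\,\Gamma((N+1)/2)$ computed from \eqref{Eq:normalization constant}, producing the prefactor $\sqrt{2/\pi}\,\Gamma((N+1)/2)$.

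The main obstacle I anticipate is purely computational: keeping the normalization constants straight ($c_N/c_{N+1}$, the Jacobian of $y=x/\sqrt{2}$, the $(\pi(1-\kappa^2))^{-1/2}$ from the $\eta$-convolution), and checking that the completion of squares in the joint Gaussian integral over $(x,\eta)$ indeed absorbs all dependence on $\rho',\rho''$ into $\kappa$ alone. The GOE decomposition of the conditional Hessian in the second paragraph above is the conceptual heart of the argument; everything else is organised bookkeeping.
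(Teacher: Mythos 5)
Your proposal is correct and follows essentially the same route as the paper: the conditional-Hessian representation $\sqrt{8\rho''}M_N+[2\rho'x+2\sqrt{\rho''-\rho'^2}\,\eta]I_N$ that you derive from the $8\rho''I+4\rho''J$ covariance splitting is exactly Lemma \ref{Lem:GOE for det Hessian}, and your ``central identity'' combined with the Gaussian average over $\eta$ is precisely Auffinger's Lemma \ref{Lem:GOE computation}, which the paper invokes through Lemmas \ref{Lem:expectation of local max} and \ref{Lem:expectation of local max above u}. The only (immaterial) difference is that you compute the denominator by integrating the conditional expectation over all $x$ and completing the square, whereas the paper applies the unconditional GOE representation directly with $m=0$, $\sigma=1/\sqrt{2}$; both yield $(8\rho'')^{N/2}\sqrt{2/\pi}\,\Gamma\big(\tfrac{N+1}{2}\big)\E_{GOE}^{N+1}\{e^{-\la_{N+1}^2/2}\}$.
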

\begin{proof}\ Since $f$ is centered and has unit variance, the numerator in \eqref{Thm:Palm distr} can be written as
$$
\int_u^\infty \phi(x) \E\{|{\rm det} \nabla^2 f(t_0)|\mathbbm{1}_{\{{\rm index}(\nabla^2 f(t_0))=N\}}|f(t_0)=x, \nabla f(t_0)=0\}dx.
$$
Applying Theorem \ref{Thm:Palm distr} and Lemmas \ref{Lem:expectation of local max} and \ref{Lem:expectation of local max above u} below gives the desired result.
\end{proof}
\begin{remark}
The formula in Theorem \ref{Thm:Palm distr iso Euclidean} shows that for an isotropic Gaussian field over $\R^N$, $F_{t_0}(u)$ only depends on $\kappa$. Therefore, we may write $F_{t_0}(u)$ as $F_{t_0}(u, \kappa)$. As a consequence of Lemma \ref{Lem:GOE for det Hessian}, $F_{t_0}(u, \kappa)$ is continuous in $\kappa$, hence the formula for the case of $\kappa = 1$ (i.e. $\rho''-\rho'^2= 0$) can also be derived by taking the limit $\lim_{\kappa \uparrow 1}F_{t_0}(u, \kappa)$.
\end{remark}

Next we show an example on computing $F_{t_0}(u)$ explicitly for $N=2$. The calculation for $N=1$ and $N > 2$ is similar and thus omitted here. In particular, the formula for $N=1$ derived in such method can be verified to be the same as in Cramer and Leadbetter (1967).

\begin{example} Let $N=2$. Applying Proposition \ref{Prop:GOE expectation for N=2} below with $a=1$ and $b=0$ gives
\begin{equation}\label{Eq:GOE expectation for a=1 N=2}
\E_{GOE}^{N+1}\bigg\{ \exp\bigg[-\frac{\la_{N+1}^2}{2} \bigg] \bigg\}= \frac{\sqrt{6}}{6}.
\end{equation}
Applying Proposition \ref{Prop:GOE expectation for N=2} again with $a=1/(1-\kappa^2)$ and $b=\kappa x/\sqrt{2}$, one has
\begin{equation}\label{Eq:GOE expectation for a and b N=2}
\begin{split}
&\quad \E_{GOE}^{N+1}\bigg\{ \exp\bigg[\frac{\la_{N+1}^2}{2} - \frac{(\la_{N+1}-\kappa x/\sqrt{2} )^2}{1-\kappa^2} \bigg]\bigg\}\\
&=\frac{\sqrt{1-\kappa^2}}{\pi\sqrt{2}}\bigg\{\pi\kappa^2(x^2-1)\Phi\Big(\frac{\kappa x}{\sqrt{2-\kappa^2}} \Big) + \frac{\kappa x\sqrt{2-\kappa^2}\sqrt{\pi}}{\sqrt{2}}e^{-\frac{\kappa^2x^2}{2(2-\kappa^2)}} \\
&\quad + \frac{2\pi}{\sqrt{3-\kappa^2}}e^{-\frac{\kappa^2x^2}{2(3-\kappa^2)}}\Phi\Big(\frac{\kappa x}{\sqrt{(3-\kappa^2)(2-\kappa^2)}} \Big) \bigg\},
\end{split}
\end{equation}
where $\Phi(x)=(2\pi)^{-1/2}\int_{-\infty}^x e^{-\frac{t^2}{2}} dt$ is the c.d.f. of standard Normal random variable. Let $h(x)$ be the density function of the distribution of the height of a local maximum, i.e. $h(x)=-F'_{t_0}(x)$. By Theorem \ref{Thm:Palm distr iso Euclidean}, (\ref{Eq:GOE expectation for a=1 N=2}) and (\ref{Eq:GOE expectation for a and b N=2}),
\begin{equation}\label{Eq:h on R^2}
\begin{split}
h(x) &=\sqrt{3}\kappa^2(x^2-1)\phi(x)\Phi\Big(\frac{\kappa x}{\sqrt{2-\kappa^2}} \Big) + \frac{\kappa x\sqrt{3(2-\kappa^2)}}{2\pi}e^{-\frac{x^2}{2-\kappa^2}} \\
&\quad+\frac{\sqrt{6}}{\sqrt{\pi(3-\kappa^2)}}e^{-\frac{3x^2}{2(3-\kappa^2)}}\Phi\Big(\frac{\kappa x}{\sqrt{(3-\kappa^2)(2-\kappa^2)}} \Big),
\end{split}
\end{equation}
and hence $F_{t_0}(u)=\int_u^\infty h(x)dx$. Figure \ref{Fig:h on R^2} shows several examples. Shown in solid red is the extreme case of $({\bf C}3)$, $\kappa=1$, which simplifies to
\begin{equation*}
h(x) = \sqrt{3}(x^2-1)\phi(x)\Phi(x) + \frac{\sqrt{3}}{2\pi}xe^{-x^2}+\frac{\sqrt{3}}{\sqrt{\pi}}e^{-\frac{3x^2}{4}}\Phi\Big(\frac{x}{\sqrt{2}} \Big).
\end{equation*}
As an interesting phenomenon, it can be seen from both (\ref{Eq:h on R^2}) and Figure~\ref{Fig:h on R^2} that $h(x) \to \phi(x)$ if $\kappa \to 0$.
\begin{figure}[h!]
  \centering
\includegraphics[scale=0.45]{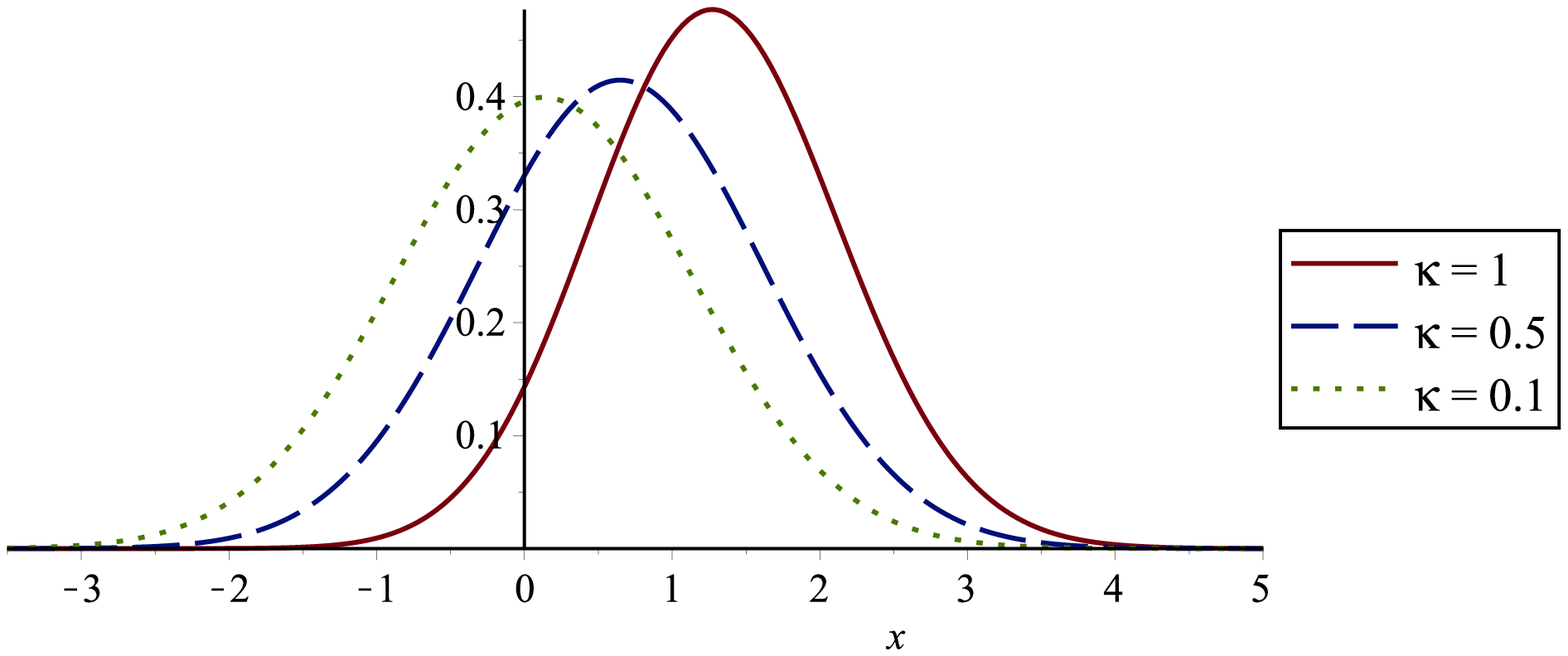}
\caption{Density function $h(x)$ of the distribution $F_{t_0}$ for isotropic Gaussian fields on $\R^2$.}
\label{Fig:h on R^2}
\end{figure}
\end{example}

\begin{theorem}\label{Thm:overshoot isotropic} Let $\{f(t): t\in T\}$ be a centered, unit-variance, isotropic Gaussian random field satisfying $({\bf C}1)$, $({\bf C}2)$ and $({\bf C}3)$. Then for each $t_0\in \overset{\circ}{T}$, $v\in \R$ and $u>0$,
\begin{equation}\label{Eq:overshoot isotropic}
\begin{split}
\bar{F}_{t_0}(u,v)= \left\{
  \begin{array}{l l}
     \frac{\int_{u+v}^\infty \phi(x)\E_{GOE}^{N+1}\left\{ \exp\left[\la_{N+1}^2/2 - (\la_{N+1}-\kappa x/\sqrt{2} )^2/(1-\kappa^2) \right]\right\}dx}{\int_v^\infty \phi(x)\E_{GOE}^{N+1}\left\{ \exp\left[\la_{N+1}^2/2 - (\la_{N+1}-\kappa x/\sqrt{2} )^2/(1-\kappa^2) \right]\right\}dx} & \quad \text{if $\kappa \in (0,1)$},\\
     \frac{\int_{u+v}^\infty \phi(x)\E_{GOE}^{N}\left\{ \left(\prod_{i=1}^N|\la_i-x/\sqrt{2}|\right) \mathbbm{1}_{\{\la_N<x/\sqrt{2}\}} \right\}dx}{\int_v^\infty \phi(x)\E_{GOE}^{N}\left\{ \left(\prod_{i=1}^N|\la_i-x/\sqrt{2}|\right) \mathbbm{1}_{\{\la_N<x/\sqrt{2}\}} \right\}dx} & \quad \text{if $\kappa = 1$},
   \end{array} \right.
\end{split}
\end{equation}
where $\kappa$ is defined in \eqref{Eq:kappa}.
\end{theorem}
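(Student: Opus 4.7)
The plan is to reduce Theorem \ref{Thm:overshoot isotropic} to the exact formula for $\bar{F}_{t_0}(u,v)$ already established in Theorem \ref{Thm:overshoot distr}, and then apply the same GOE-based computation used to derive Theorem \ref{Thm:Palm distr iso Euclidean}, but now to both the numerator and the denominator. Concretely, Theorem \ref{Thm:overshoot distr} gives
\begin{equation*}
\bar{F}_{t_0}(u,v) = \frac{\E\{|\det \nabla^2 f(t_0)|\mathbbm{1}_{\{f(t_0)>u+v\}}\mathbbm{1}_{\{{\rm index}(\nabla^2 f(t_0))=N\}}\mid \nabla f(t_0)=0\}}{\E\{|\det \nabla^2 f(t_0)|\mathbbm{1}_{\{f(t_0)>v\}}\mathbbm{1}_{\{{\rm index}(\nabla^2 f(t_0))=N\}}\mid \nabla f(t_0)=0\}},
\end{equation*}
so the only task is to rewrite each of the two conditional expectations in the GOE form appearing in the numerator of Theorem \ref{Thm:Palm distr iso Euclidean}, with the exceedance level $u$ there replaced by $u+v$ on top and by $v$ on the bottom.

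Next, I would exploit isotropy and unit variance. By Lemma \ref{Lem:cov of isotropic Euclidean}, $f(t_0)$ is independent of $\nabla f(t_0)$, so conditioning on $\nabla f(t_0)=0$ preserves the marginal law $\mathcal N(0,1)$ of $f(t_0)$. Writing the indicator on $f(t_0)$ as an integral over its Gaussian density, the numerator and denominator become
\begin{equation*}
\int_{u+v}^\infty \phi(x)\,\E\{|\det \nabla^2 f(t_0)|\mathbbm{1}_{\{{\rm index}(\nabla^2 f(t_0))=N\}}\mid f(t_0)=x,\nabla f(t_0)=0\}\,dx
\end{equation*}
and the same expression with lower limit $v$.

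Now apply the key tool from Lemmas \ref{Lem:expectation of local max} and \ref{Lem:expectation of local max above u} (the GOE representation), which were already used to prove Theorem \ref{Thm:Palm distr iso Euclidean}. For $\kappa\in(0,1)$, this yields
\begin{equation*}
\E\{|\det \nabla^2 f(t_0)|\mathbbm{1}_{\{{\rm index}=N\}}\mid f(t_0)=x,\nabla f(t_0)=0\} \,=\, C_N(\kappa)\,\E_{GOE}^{N+1}\bigl\{\exp[\la_{N+1}^2/2-(\la_{N+1}-\kappa x/\sqrt 2)^2/(1-\kappa^2)]\bigr\},
\end{equation*}
while for $\kappa=1$ one obtains the alternative GOE expression in terms of $\E_{GOE}^{N}\{\prod_{i=1}^N|\la_i-x/\sqrt 2|\,\mathbbm{1}_{\{\la_N<x/\sqrt 2\}}\}$, with some $x$-independent normalizing constant depending only on $N$ and $\kappa$. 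Substituting these expressions into both the numerator and denominator above, the $x$-independent constant $C_N(\kappa)$ (or its $\kappa=1$ analogue) cancels in the ratio, producing exactly the formulae displayed in \eqref{Eq:overshoot isotropic}.

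There is no real obstacle here, since the analytic work has already been done: the heavy lifting is the GOE computation encapsulated in Lemmas \ref{Lem:expectation of local max} and \ref{Lem:expectation of local max above u}, and the Palm reduction from Theorem \ref{Thm:overshoot distr}. The only point requiring mild care is ensuring that the normalizing factor arising from the GOE identification is genuinely independent of $x$ so that it cancels in the ratio, and handling the case $\kappa=1$ either directly or, as noted in the remark after Theorem \ref{Thm:Palm distr iso Euclidean}, as the limit $\kappa\uparrow 1$ using continuity from Lemma \ref{Lem:GOE for det Hessian}.
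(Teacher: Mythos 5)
Your proposal is correct and follows essentially the same route as the paper, which simply combines Theorem \ref{Thm:overshoot distr} with the GOE representation in Lemma \ref{Lem:expectation of local max above u}, letting the $x$-independent constants cancel in the ratio. No gaps.
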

\begin{proof}\ The result follows immediately by applying Theorem \ref{Thm:overshoot distr} and Lemma \ref{Lem:expectation of local max above u} below.
\end{proof}

Note that the expectations in \eqref{Eq:overshoot isotropic} can be computed similarly to \eqref{Eq:GOE expectation for a and b N=2} for any $N \ge 1$, thus Theorem \ref{Thm:overshoot isotropic} provides an explicit formula for the overshoot distribution of isotropic Gaussian fields. On the other hand, since isotropy implies stationarity, the approximation to overshoot distribution for large $v$ is simply given by Corollary \ref{Cor:overshoot 1D}.

\section{Smooth Gaussian Random Fields on Manifolds}\label{section:general manifolds}
\subsection{Height Distribution and Overshoot Distribution of Local Maxima}
Let $(M,g)$ be an $N$-dimensional Riemannian manifold and let $f$ be a smooth function on $M$. Then the \emph{gradient} of $f$, denoted by $\nabla f$, is the unique continuous vector field on $M$ such that
$g(\nabla f, X) = X f$
for every vector field $X$. The \emph{Hessian} of $f$, denoted by $\nabla^2 f$, is the double differential form defined by
$\nabla^2 f (X,Y)= XY f - \nabla_X Y f$,
where $X$ and $Y$ are vector fields, $\nabla_X$ is the Levi-Civit\'a connection of $(M,g)$. To make the notations consistent with the Euclidean case, we fix an orthonormal frame $\{E_i\}_{1\le i\le N}$, and let
\begin{equation}\label{Eq:gradient and hessian on manifolds}
\begin{split}
\nabla f &= (f_1, \ldots, f_N)=(E_1f, \ldots, E_Nf),\\
\nabla^2 f &= (f_{ij})_{1\le i, j\le N}= (\nabla^2 f (E_i, E_j))_{1\le i, j\le N}.
\end{split}
\end{equation}
Note that if $t$ is a critical point, i.e. $\nabla f(t)=0$, then $\nabla^2 f (E_i, E_j)(t)=E_iE_jf(t)$, which is similar to the Euclidean case.

Let $B_{t_0}(\ep)=\{t\in M: d(t,t_0)\leq \ep\}$ be the geodesic ball of radius $\ep$ centered at $t_0\in \overset{\circ}{M}$, where $d$ is the distance function induced by the Riemannian metric $g$. We also define $F_{t_0}(u)$ as in (\ref{Eq:Palm Ut0}) and $\bar{F}_{t_0}(u,v)$ as in (\ref{Eq:overshoot Ut0}) with $U_{t_0}(\ep)$ replaced by $B_{t_0}(\ep)$, respectively.

We will make use of the following conditions.
\begin{itemize}
\item[$({\bf C}1')$.] $f \in C^2(M)$ almost surely and its second derivatives satisfy the
\emph{mean-square H\"older condition}: for any $t_0\in M$, there exist positive constants $L$, $\eta$ and $\delta$ such that
\begin{equation*}
\E(f_{ij}(t)-f_{ij}(s))^2 \leq L^2 d(t,s)^{2\eta}, \quad \forall t,s\in B_{t_0}(\delta),\ i, j= 1, \ldots, N.
\end{equation*}

\item[$({\bf C}2')$.]  For every pair $(t, s)\in M^2$ with $t\neq s$, the Gaussian random vector
$$(f(t), \nabla f(t), f_{ij}(t),\,
 f(s), \nabla f(s), f_{ij}(s), 1\leq i\leq j\leq N)$$
is  non-degenerate.
\end{itemize}
Note that $({\bf C}1')$ holds when $f\in C^3(M)$.

Theorem \ref{Thm:Palm distr manifolds} below, whose proof is given in Section \ref{Section:proofs of main results}, is a generalization of Theorems \ref{Thm:Palm distr}, \ref{Thm:overshoot distr}, \ref{Thm:Palm distr high level} and Corollary \ref{Cor:Palm distr high level o(1)}. It provides formulae for both the height distribution and the overshoot distribution of local maxima for smooth Gaussian fields over Riemannian manifolds. Note that the formal expressions are exactly the same as in Euclidean case, but now the field is defined on a manifold.
\begin{theorem}\label{Thm:Palm distr manifolds} Let $(M,g)$ be an oriented $N$-dimensional $C^3$ Riemannian manifold with a $C^1$ Riemannian metric $g$.  Let $f$ be a Gaussian random field on $M$ such that $({\bf C}1')$ and $({\bf C}2')$ are fulfilled. Then for each $t_0 \in \overset{\circ}{M}$, $u, v\in \R$ and $w>0$,
\begin{equation}\label{Eq:Palm distr manifolds 1}
\begin{split}
F_{t_0}(u) &= \frac{\E\{|{\rm det} \nabla^2 f(t_0)|\mathbbm{1}_{\{f(t_0)> u\}} \mathbbm{1}_{\{{\rm index}(\nabla^2 f(t_0))=N\}}|\nabla f(t_0)=0\}}{\E\{|{\rm det} \nabla^2 f(t_0)|\mathbbm{1}_{\{{\rm index}(\nabla^2 f(t_0))=N\}} | \nabla f(t_0)=0\}},\\
\bar{F}_{t_0}(w,v) &= \frac{\E\{|{\rm det} \nabla^2 f(t_0)|\mathbbm{1}_{\{f(t_0)> w+v\}} \mathbbm{1}_{\{{\rm index}(\nabla^2 f(t_0))=N\}}|\nabla f(t_0)=0\}}{\E\{|{\rm det} \nabla^2 f(t_0)|\mathbbm{1}_{\{f(t_0)> v\}} \mathbbm{1}_{\{{\rm index}(\nabla^2 f(t_0))=N\}}|\nabla f(t_0)=0\}}.
\end{split}
\end{equation}
If we assume further that $f$ is centered and has unit variance, then for each fixed $w>0$, there exists $\alpha>0$ such that as $v\to \infty$,
\begin{equation}\label{Eq:Palm distr manifolds 3}
\begin{split}
\bar{F}_{t_0}(w,v) &= \frac{\int_{w+v}^\infty \phi(x) \E\{{\rm det} \nabla^2 f(t_0)|f(t_0)=x, \nabla f(t_0)=0\}dx}{\int_v^\infty \phi(x) \E\{{\rm det} \nabla^2 f(t_0)|f(t_0)=x, \nabla f(t_0)=0\}dx}(1+o(e^{-\alpha v^2}))\\
&= \frac{(w+v)^{N-1}e^{-(w+v)^2/2}}{v^{N-1}e^{-v^2/2}}(1+O(v^{-2})).
\end{split}
\end{equation}
\end{theorem}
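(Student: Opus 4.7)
The plan is to reduce the proof to the Euclidean results of Theorems~\ref{Thm:Palm distr}, \ref{Thm:overshoot distr}, and \ref{Thm:Palm distr high level} by working in geodesic normal coordinates at $t_0$. Fix $t_0\in \overset{\circ}{M}$ and choose a chart $\varphi:B_{t_0}(\delta)\to \R^N$ with $\varphi(t_0)=0$, $g_{ij}(t_0)=\de_{ij}$, and vanishing Christoffel symbols at $t_0$. Write $\wt f = f\circ\varphi^{-1}$. At $t_0$, the orthonormal-frame gradient and Hessian defined in \eqref{Eq:gradient and hessian on manifolds} coincide with the usual Euclidean gradient and Hessian of $\wt f$; moreover $\sqrt{\det g(t)} = 1 + O(d(t,t_0)^2)$, so the Riemannian volume form on $B_{t_0}(\epsilon)$ agrees with Lebesgue measure on $\varphi(B_{t_0}(\epsilon))$ up to a factor $1+o(1)$ as $\epsilon\to 0$. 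Conditions $({\bf C}1')$--$({\bf C}2')$ thus translate directly to $({\bf C}1)$--$({\bf C}2)$ for $\wt f$ on a Euclidean neighborhood of $0$.

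Next I would apply the Kac--Rice formula in the chart to express the expected number of local maxima in $B_{t_0}(\epsilon)$ above level $u$ as the integral over $B_{t_0}(\epsilon)$ of
\[
\E\{|{\rm det}\nabla^2 f(t)|\mathbbm{1}_{\{f(t)>u\}}\mathbbm{1}_{\{{\rm index}(\nabla^2 f(t))=N\}}\mid \nabla f(t)=0\}\, p_{\nabla f(t)}(0)
\]
against $d\mathrm{Vol}_g$; the coordinate invariance of $\{\nabla f=0\}$ and of $|{\rm det}\nabla^2 f|$ at critical points ensures that this formula is intrinsic on $M$. A standard second-moment argument, using the non-degeneracy in $({\bf C}2')$, shows that the probability of two or more local maxima in $B_{t_0}(\epsilon)$ is $o(\mathrm{Vol}_g(B_{t_0}(\epsilon)))$; hence the probabilities in the definitions of $F_{t_0}(u)$ and $\bar F_{t_0}(w,v)$ equal the corresponding expected counts up to this same error. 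Dividing numerator and denominator by $\mathrm{Vol}_g(B_{t_0}(\epsilon))\cdot p_{\nabla f(t_0)}(0)$ and using continuity of the Kac--Rice integrand at $t_0$ (guaranteed by $({\bf C}1')$), the limits as $\epsilon\to 0$ yield the two formulae in \eqref{Eq:Palm distr manifolds 1}.

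For the asymptotic statement \eqref{Eq:Palm distr manifolds 3}, unit variance and centering make the joint density of $(f(t_0),\nabla f(t_0))$ at $(x,0)$ factor as $\phi(x)$ times the marginal density of $\nabla f(t_0)$ at $0$, so \eqref{Eq:Palm distr manifolds 1} becomes a ratio of integrals in $x$ against $\phi$. The indicator $\mathbbm{1}_{\{{\rm index}(\nabla^2 f(t_0))=N\}}$ can be dropped at the cost of super-exponentially small error, since conditionally on $f(t_0)=x$ and $\nabla f(t_0)=0$ the Hessian has mean linear in $x$ and bounded covariance, so the probability that it has index less than $N$ is at most $e^{-\alpha x^2}$ for some $\alpha>0$ and all large $x$. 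This yields the first line of \eqref{Eq:Palm distr manifolds 3}. The second line then follows from the fact (to be recorded in the lemma invoked in Corollary~\ref{Cor:Palm distr high level o(1)}) that $\E\{{\rm det}\nabla^2 f(t_0)\mid f(t_0)=x,\nabla f(t_0)=0\}$ is a polynomial in $x$ of degree $N$, together with the tail asymptotic $\int_v^\infty x^N\phi(x)\,dx \sim v^{N-1}\phi(v)$ as $v\to\infty$. The main obstacle is the second-moment bound controlling multiple local maxima inside shrinking geodesic balls, because on a general manifold one must simultaneously track the volume behavior of $B_{t_0}(\epsilon)$ and the joint density of the field at pairs of nearby points; once this is in place, the rest is a chart-level translation of the Euclidean proofs.
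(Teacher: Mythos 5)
Your proposal is correct and follows essentially the same route as the paper: transfer the problem to a chart (the paper uses a general atlas rather than normal coordinates), prove the factorial-moment bound $\E\{\mu(t_0,\ep)(\mu(t_0,\ep)-1)\}=o(\ep^N)$ chart-by-chart via the Euclidean lemma, apply the Kac--Rice metatheorem on manifolds together with continuity of the integrand to pass to the limit, and invoke the Euclidean high-level results for \eqref{Eq:Palm distr manifolds 3}. The only point to tighten is that dropping the indicator $\mathbbm{1}_{\{{\rm index}(\nabla^2 f(t_0))=N\}}$ requires bounding $\E\{|{\rm det}\nabla^2 f(t_0)|\,\mathbbm{1}_{\{{\rm index}<N\}}\,|\,f(t_0)=x,\nabla f(t_0)=0\}$ rather than merely $\P\{{\rm index}<N\}$; the paper does this by integrating the polynomially bounded determinant over the Gaussian tail region $\{\|(w_{ij})\|\ge x/c\}$, and your argument needs the same (e.g.\ Cauchy--Schwarz) step.
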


It is quite remarkable that the second approximation in \eqref{Eq:Palm distr manifolds 3} does not depend on the curvature of the manifold nor the covariance function of the field, which need not have any stationary properties other than zero mean and constant variance.

\subsection{Isotropic Gaussian Random Fields on the Sphere}\label{Section:isotropic sphere}
Similarly to the Euclidean case, we explore the explicit formulae for both the height distribution and the overshoot distribution of local maxima for isotropic Gaussian random fields on a particular manifold, sphere.

Consider an isotropic Gaussian random field $\{f(t): t\in \mathbb{S}^N\}$, where $\mathbb{S}^N\subset \R^{N+1}$ is the $N$-dimensional unit sphere. For the purpose of simplifying the arguments, we will focus here on the case $N\ge 2$. The special case of the circle, $N=1$, requires separate treatment but extending our results to that case is straightforward.

The following theorem by Schoenberg (1942) characterizes the covariance function of an isotropic Gaussian field on sphere [see also Gneiting (2013)].
\begin{theorem}\label{Thm:Schoenberg} A continuous function $C(\cdot, \cdot):\mathbb{S}^N\times \mathbb{S}^N \rightarrow \R $ is the covariance of an isotropic Gaussian field on $\mathbb{S}^N$, $N \ge 2$, if and only if it has the form
\begin{equation*}
C(t,s)= \sum_{n=0}^\infty a_n P_n^\la(\l t, s \r), \quad  t, s \in \mathbb{S}^N,
\end{equation*}
where $\la=(N-1)/2$, $a_n \geq 0$, $\sum_{n=0}^\infty a_nP_n^\la(1) <\infty$, and $P_n^\la$ are ultraspherical polynomials defined by the expansion
\begin{equation*}
(1-2rx +r^2)^{-\la}=\sum_{n=0}^\infty r^n P_n^\la(x), \quad x\in [-1,1].
\end{equation*}
\end{theorem}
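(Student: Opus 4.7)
The plan is to prove the two directions separately, with the main tools being the addition theorem for ultraspherical polynomials and completeness of spherical harmonics on $\mathbb{S}^N$. Throughout, the key structural fact is that isotropy means that the orthogonal group $O(N+1)$ acts on $\mathbb{S}^N$ and preserves the distribution of $f$, so $C(t,s)$ is a function of $\langle t,s\rangle$ alone.

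For the \emph{sufficiency} direction, I would first note that the invariance of $\langle t,s\rangle$ under simultaneous rotations of $t$ and $s$ makes isotropy immediate. Positive semidefiniteness of each building block $P_n^\lambda(\langle t,s\rangle)$ comes from the addition theorem for ultraspherical polynomials: if $\{Y_{n,k}\}_{k=1}^{d_n}$ is a real orthonormal basis of spherical harmonics of degree $n$ on $\mathbb{S}^N$, then there is a positive constant $c_{n,\lambda}$ with
\begin{equation*}
P_n^\lambda(\langle t,s\rangle) = c_{n,\lambda}\sum_{k=1}^{d_n} Y_{n,k}(t)\, Y_{n,k}(s).
\end{equation*}
This representation writes $P_n^\lambda(\langle \cdot,\cdot\rangle)$ as an inner product of feature maps, hence positive semidefinite. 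Taking a nonnegative combination preserves positive semidefiniteness, and the bound $|P_n^\lambda(x)|\le P_n^\lambda(1)$ for $x\in[-1,1]$ together with $\sum_n a_n P_n^\lambda(1)<\infty$ gives absolute uniform convergence of the series on $\mathbb{S}^N\times\mathbb{S}^N$, so the limit is a continuous positive semidefinite kernel. By the Kolmogorov extension theorem, any such continuous positive semidefinite kernel is the covariance of a Gaussian field on $\mathbb{S}^N$, which is isotropic by construction.

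For the \emph{necessity} direction, isotropy forces $C(t,s)=\psi(\langle t,s\rangle)$ for a continuous function $\psi:[-1,1]\to\mathbb{R}$. Since the ultraspherical polynomials $\{P_n^\lambda\}$ form a complete orthogonal system in $L^2([-1,1],(1-x^2)^{\lambda-1/2}dx)$, one can expand
\begin{equation*}
\psi(x)=\sum_{n=0}^\infty a_n P_n^\lambda(x)
\end{equation*}
at least in $L^2$. To show $a_n\ge 0$, I would test the positive semidefinite kernel $C$ against spherical harmonics: using the Funk--Hecke formula, for any $Y_{n,k}$,
\begin{equation*}
\int_{\mathbb{S}^N}\int_{\mathbb{S}^N} \psi(\langle t,s\rangle)\,Y_{n,k}(t)\,Y_{n,k}(s)\,d\sigma(t)\,d\sigma(s) = \frac{a_n}{c_{n,\lambda}}\int_{\mathbb{S}^N}|Y_{n,k}(t)|^2 d\sigma(t),
\end{equation*}
up to absolute constants from normalization of $\sigma$. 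The left side is nonnegative because $C$ is positive semidefinite (it is an $L^2$-quadratic form of the kernel applied to $Y_{n,k}\otimes Y_{n,k}$), so $a_n\ge 0$. The summability $\sum_n a_n P_n^\lambda(1)<\infty$ follows by evaluating on the diagonal $t=s$: since $C(t,t)=\mathrm{Var}(f(t))<\infty$ and the partial sums converge monotonically there (all terms being nonnegative), the total must be finite.

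The main obstacle is the justification of the interchange of series and integral and the promotion of $L^2$ convergence to pointwise/uniform convergence. The cleanest route is to establish $a_n\ge 0$ first using the Funk--Hecke identity above, which needs only $L^2$ convergence of the Gegenbauer expansion of $\psi$; once nonnegativity of the $a_n$ is in hand, the summability condition from evaluating at the diagonal, together with $|P_n^\lambda(x)|\le P_n^\lambda(1)$, upgrades the expansion to a uniformly convergent series on $[-1,1]$, and by continuity of $\psi$ the series equals $\psi$ everywhere. A subtle point is that Funk--Hecke is typically stated for integrable radial profiles, so one must verify that the continuous $\psi$ on the compact interval $[-1,1]$ lies in the appropriate weighted $L^1$/$L^2$ class, which is automatic here; the rest is bookkeeping with spherical harmonic normalization constants to identify $c_{n,\lambda}$.
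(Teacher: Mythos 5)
The paper does not prove this statement: it is quoted as Schoenberg's 1942 theorem (with Gneiting (2013) as a secondary reference), so there is no internal proof to compare against. Your proposal is the standard route to Schoenberg's characterization --- the addition theorem for spherical harmonics to get positive semidefiniteness of each $P_n^\la(\l t,s\r)$ in the sufficiency direction, and Funk--Hecke plus completeness of the Gegenbauer system to extract nonnegative coefficients in the necessity direction --- and both directions are sound in outline. The sufficiency half is complete as written: $|P_n^\la(x)|\le P_n^\la(1)$ holds since $\la=(N-1)/2>0$, the Weierstrass test gives uniform convergence, and Kolmogorov extension produces the field.

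The one step in the necessity half that does not hold up as stated is the derivation of $\sum_n a_nP_n^\la(1)<\infty$ by ``evaluating on the diagonal.'' To read off $\sum_n a_n P_n^\la(1)=\psi(1)<\infty$ from $C(t,t)=\mathrm{Var}(f(t))$ you need the series to converge to $\psi$ at $x=1$, which is exactly what you are trying to establish; monotonicity of the partial sums at $x=1$ gives boundedness only if you already know they are dominated by $\psi(1)$. The standard repair is spectral rather than pointwise: the integral operator on $L^2(\mathbb{S}^N)$ with the continuous positive semidefinite kernel $C$ is trace class with trace $\int_{\mathbb{S}^N}C(t,t)\,d\sigma(t)=\omega_N\psi(1)$ by Mercer's theorem; by Funk--Hecke it acts as the scalar $\mu_n\ge 0$ on the degree-$n$ harmonics, and the addition theorem identifies $d_n\mu_n$ with a positive constant times $a_nP_n^\la(1)$, so $\sum_n a_nP_n^\la(1)\le \mathrm{const}\cdot\psi(1)<\infty$. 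With that in hand your uniform-convergence upgrade and the identification of the sum with $\psi$ by continuity go through. This is a localized, fixable gap rather than a wrong approach, but as written the summability argument is circular.
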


\begin{remark} (i). Note that [cf. Szeg\"o (1975, p. 80)]
\begin{equation}\label{Eq:us polynomials at 1}
P_n^\la(1)=\binom{n+2\la-1}{n}
\end{equation}
and $\la=(N-1)/2$, therefore, $\sum_{n=0}^\infty a_nP_n^\la(1) <\infty$ is equivalent to $\sum_{n=0}^\infty n^{N-2}a_n <\infty$.

(ii). When $N=2$, $\la=1/2$ and $P_n^\la$ become \emph{Legendre polynomials}. For more results on isotropic Gaussian fields on $\mathbb{S}^2$, see a recent monograph by Marinucci and Peccati (2011).

(iii). Theorem \ref{Thm:Schoenberg} still holds for the case $N=1$ if we set [cf. Schoenberg (1942)] $P_n^0(\l t, s \r)=\cos(n\arccos\l t, s \r)=T_n(\l t, s \r)$, where $T_n$ are \emph{Chebyshev polynomials of the first kind} defined by the expansion
\begin{equation*}
\frac{1-rx}{1-2rx +r^2}=\sum_{n=0}^\infty r^n T_n(x), \quad x\in [-1,1].
\end{equation*}
The arguments in the rest of this section can be easily modified accordingly.
\end{remark}

The following statement $({\bf C}1'')$ is a smoothness condition for Gaussian fields on sphere. Lemma \ref{Lem:C^3 sphere} below shows that $({\bf C}1'')$ implies the pervious smoothness condition $({\bf C}1')$.
\begin{itemize}
\item[$({\bf C}1'')$.] The covariance $C(\cdot, \cdot)$ of $\{f(t): t\in \mathbb{S}^N\}$, $N \ge 2$, satisfies
\begin{equation*}
C(t,s)= \sum_{n=0}^\infty a_n P_n^\la(\l t, s \r), \quad  t, s \in \mathbb{S}^N,
\end{equation*}
where $\la=(N-1)/2$, $a_n \geq 0$, $\sum_{n=1}^\infty n^{N+8}a_n<\infty$, and $P_n^\la$ are ultraspherical polynomials.
\end{itemize}

\begin{lemma}\label{Lem:C^3 sphere} {\rm [Cheng and Xiao (2014)].}
Let $f$ be an isotropic Gaussian field on $\mathbb{S}^N$ such that $({\bf C}1'')$ is fulfilled. Then the covariance $C(\cdot, \cdot)\in C^5(\mathbb{S}^N\times \mathbb{S}^N)$ and hence $({\bf C}1')$ holds for $f$.
\end{lemma}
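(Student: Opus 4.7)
The plan is to split the argument into two parts: first show that $C(\cdot,\cdot)\in C^5(\mathbb{S}^N\times\mathbb{S}^N)$ by differentiating the Schoenberg expansion term by term, and then deduce both the almost-sure $C^2$ sample-path regularity of $f$ and the mean-square H\"older bound required by $({\bf C}1')$. Throughout, I will control each differentiated series uniformly using the summability hypothesis in $({\bf C}1'')$.

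For the first part, the workhorse is the classical derivative identity for ultraspherical polynomials,
\begin{equation*}
\frac{d^k}{dx^k} P_n^\lambda(x) = 2^k (\lambda)_k \, P_{n-k}^{\lambda+k}(x),
\end{equation*}
combined with the uniform bound $|P_m^\mu(x)|\le P_m^\mu(1)=\binom{m+2\mu-1}{m}=O(m^{2\mu-1})$ for $x\in[-1,1]$ and $\mu>0$. Together these yield $|(P_n^\lambda)^{(k)}(x)|=O(n^{2\lambda+2k-1})=O(n^{N+2k-2})$. Working in ambient Euclidean coordinates of $\R^{N+1}$ restricted to $\mathbb{S}^N$ (or equivalently in a local orthonormal frame), all partial derivatives of $\langle t,s\rangle$ are uniformly bounded on the compact set $\mathbb{S}^N\times\mathbb{S}^N$, so the chain rule (Fa\`a di Bruno) bounds every mixed covariant derivative of order $k\le 5$ of $P_n^\lambda(\langle t,s\rangle)$ by a constant multiple of $\sum_{j=1}^k|(P_n^\lambda)^{(j)}(\langle t,s\rangle)|=O(n^{N+8})$. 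Summing against $a_n$ and invoking $\sum_n n^{N+8}a_n<\infty$ gives absolute uniform convergence of every such series, placing $C$ in $C^5(\mathbb{S}^N\times\mathbb{S}^N)$.

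For the second part, once $C\in C^4$, the mean-square second partial derivatives $f_{ij}$ exist as a Gaussian field with covariance
\begin{equation*}
K_{ij}(t,s):=\E[f_{ij}(t)f_{ij}(s)],
\end{equation*}
which can be written as a fixed fourth-order differential expression in $C(t,s)$ in an orthonormal frame. Because $C\in C^5$, $K_{ij}$ is $C^1$ and hence locally Lipschitz on the compact manifold. The function $g(s):=\E(f_{ij}(t)-f_{ij}(s))^2 = K_{ij}(t,t)-2K_{ij}(t,s)+K_{ij}(s,s)$ is non-negative and vanishes at $s=t$, so its gradient vanishes there as well; a first-order Taylor expansion of $\nabla g$ about $s=t$ then yields $g(s)=O(d(t,s)^2)$, which is $({\bf C}1')$ with $\eta=1$. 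The almost-sure $C^2$ regularity of $f$ follows from Kolmogorov's continuity theorem applied to the Gaussian field $\{f_{ij}\}$: variance-$O(d(t,s)^2)$ Gaussian increments produce H\"older sample paths of any exponent strictly below~$1$.

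The hard part will be the bookkeeping in the chain rule on the manifold, making sure that the only $n$-dependence in the uniform bound comes from $(P_n^\lambda)^{(k)}$ and not from derivatives of $\langle t,s\rangle$ or of the orthonormal-frame coefficients. Once that separation is in place the rest is routine, and the summability $\sum_n n^{N+8}a_n<\infty$ turns out to be precisely the threshold needed for the argument to go through.
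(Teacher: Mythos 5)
The paper supplies no proof of its own here; the lemma is attributed directly to Cheng and Xiao (2014), so there is no argument in the text to compare against. Your outline is the natural one and its first half is correct: from $\frac{d^k}{dx^k}P_n^\lambda = 2^k(\lambda)_k P_{n-k}^{\lambda+k}$ and $|P_m^\mu(x)|\le P_m^\mu(1)=O(m^{2\mu-1})$, one gets $|(P_n^\lambda)^{(k)}(x)|=O(n^{N+2k-2})$, so for $k=5$ the bound is $O(n^{N+8})$, and $\sum_n n^{N+8}a_n<\infty$ is exactly the threshold for absolute uniform convergence of all mixed derivatives of $C$ up to order five, giving $C\in C^5(\mathbb{S}^N\times\mathbb{S}^N)$.

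The second half has a genuine slip in the claimed H\"older exponent. With $C\in C^5$, the covariance $K_{ij}$ of $f_{ij}$ is a fourth-order derivative of $C$ and is therefore $C^1$, so $g(s)=K_{ij}(t,t)-2K_{ij}(t,s)+K_{ij}(s,s)$ is $C^1$ in $s$. Your step ``a first-order Taylor expansion of $\nabla g$ about $s=t$ yields $g(s)=O(d(t,s)^2)$'' requires $\nabla g$ to be Lipschitz, hence $g\in C^2$, hence $K_{ij}\in C^2$, hence $C\in C^6$ --- one degree more than you have. What $C\in C^5$ does give directly, from the Lipschitz continuity of the $C^1$ function $K_{ij}$ on the compact set, is
\begin{equation*}
g(s)\le |K_{ij}(t,t)-K_{ij}(t,s)|+|K_{ij}(s,s)-K_{ij}(t,s)|\le 2\,\mathrm{Lip}(K_{ij})\,d(t,s),
\end{equation*}
i.e. $({\bf C}1')$ with $\eta=1/2$, which is all the condition asks for. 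You should drop the vanishing-gradient argument, replace $\eta=1$ by $\eta=1/2$, and correspondingly adjust the Kolmogorov continuity step: the increment variance bound $O(d(t,s))$ yields a.s.\ H\"older sample paths of $f_{ij}$ for any exponent strictly below $1/2$ (not below $1$), which is still enough for a.s.\ continuity of $f_{ij}$ and hence a.s.\ $C^2$ paths of $f$.
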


For a unit-variance isotropic Gaussian field $f$ on $\mathbb{S}^N$ satisfying $({\bf C}1'')$, we define
\begin{equation}\label{Def:C' and C''}
\begin{split}
C'&=\sum_{n=1}^\infty a_n\Big(\frac{d}{dx}P_n^\la(x)|_{x=1}\Big)=(N-1)\sum_{n=1}^\infty a_nP_{n-1}^{\la+1}(1), \\
C''&=\sum_{n=2}^\infty a_n\Big(\frac{d^2}{dx^2}P_n^\la(x)|_{x=1}\Big)=(N-1)(N+1)\sum_{n=2}^\infty a_nP_{n-2}^{\la+2}(1).
\end{split}
\end{equation}
Due to isotropy, the covariance of $(f(t), \nabla f(t), \nabla^2 f(t))$ only depends on $C'$ and $C''$, see Lemma \ref{Lem:joint distribution sphere} below. In particular, by Lemma \ref{Lem:joint distribution sphere} again, ${\rm Var}(f_i(t))=C'$ and ${\rm Var}(f_{ii}(t))=C'+3C''$ for any $i\in\{1,\ldots, N\}$. We need the following condition on $C'$ and $C''$ for further discussions.
\begin{itemize}
\item[$({\bf C}3')$.] $C''+C'-C'^2\ge 0$.
\end{itemize}
\begin{remark}\label{Remark:C3'}\
Note that $({\bf C}3')$ holds when $C(\cdot, \cdot)$ is a covariance function (i.e. positive definite function) for every dimension $N\ge 2$ (or equivalently for every $N\ge 1$). In fact, by Schoenberg (1942), if $C(\cdot, \cdot)$ is a covariance function on $\mathbb{S}^N$ for every $N\ge 2$, then it is necessary of the form
\begin{equation*}
C(t,s)= \sum_{n=0}^\infty b_n \l t, s \r^n, \quad  t, s \in \mathbb{S}^N,
\end{equation*}
where $b_n\ge 0$. Unit-variance of the field implies $\sum_{n=0}^\infty b_n=1$. Now consider the random variable $X$ that assigns probability $b_n$ to the integer $n$. Then $C'=\sum_{n=1}^\infty nb_n=\E X$, $C''=\sum_{n=2}^\infty n(n-1)b_n=\E X(X-1)$ and $C''+C'-C'^2={\rm Var}(X)\ge 0$, hence $({\bf C}3')$ holds.
\end{remark}

\begin{theorem}\label{Thm:Palm distr sphere} Let $\{f(t): t\in \mathbb{S}^N\}$, $N\ge 2$, be a centered, unit-variance, isotropic Gaussian field satisfying $({\bf C}1'')$, $({\bf C}2')$ and $({\bf C}3')$. Then for each $t_0\in \mathbb{S}^N$ and $u\in \R$,
\begin{equation*}
\begin{split}
F_{t_0}(u)= \left\{
  \begin{array}{l l}
     \frac{\big(\frac{C''+C'}{C''+C'-C'^2}\big)^{1/2}\int_u^\infty \phi(x)\E_{GOE}^{N+1}\Big\{ \exp\Big[\frac{\la_{N+1}^2}{2} - \frac{C''\big(\la_{N+1}-\frac{C'x}{\sqrt{2C''}} \big)^2}{C''+C'-C'^2} \Big]\Big\}dx}{\E_{GOE}^{N+1}\big\{ \exp\big[\frac{1}{2}\la_{N+1}^2 -\frac{C''}{C''+C'}\la_{N+1}^2 \big] \big\}} & \text{if $C''+C'-C'^2> 0$},\\
     \frac{\int_u^\infty \phi(x)\E_{GOE}^{N}\big\{ \big(\prod_{i=1}^N|\la_i-\frac{C'x}{\sqrt{2C''}}|\big) \mathbbm{1}_{\{\la_N<\frac{C'x}{\sqrt{2C''}}\}} \big\} dx}{\big(\frac{2C''}{\pi(C''+C')}\big)^{1/2} \Gamma\left(\frac{N+1}{2}\right) \E_{GOE}^{N+1}\big\{ \exp\big[\frac{1}{2}\la_{N+1}^2 -\frac{C''}{C''+C'}\la_{N+1}^2 \big] \big\}} & \text{if $C''+C'-C'^2= 0$},
   \end{array} \right.
\end{split}
\end{equation*}
where $C'$ and $C''$ are defined in (\ref{Def:C' and C''}).
\end{theorem}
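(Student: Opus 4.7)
The plan is to mirror the proof of Theorem \ref{Thm:Palm distr iso Euclidean}, using the general manifold formula \eqref{Eq:Palm distr manifolds 1} and replacing the roles of $-\rho'$ and $\rho''$ by $C'$ and $C''$ from \eqref{Def:C' and C''}. Since $f$ is centered and has unit variance, the numerator of $F_{t_0}(u)$ can be written as $\int_u^\infty \phi(x) A(x)\,dx$, where
\begin{equation*}
A(x) := \E\bigl\{|\det \nabla^2 f(t_0)|\,\mathbbm{1}_{\{\mathrm{index}(\nabla^2 f(t_0))=N\}}\bigm| f(t_0)=x,\,\nabla f(t_0)=0\bigr\}.
\end{equation*}
Isotropy on $\mathbb{S}^N$ (via the reflection $E_i\mapsto -E_i$ in the orthonormal frame) implies that $\nabla f(t_0)$ is independent of $(f(t_0), \nabla^2 f(t_0))$, so the denominator of \eqref{Eq:Palm distr manifolds 1} equals $\int_{-\infty}^\infty \phi(x) A(x)\,dx$. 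Thus the theorem reduces to evaluating $A(x)$ as a GOE expectation.

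The key input is Lemma \ref{Lem:joint distribution sphere} (invoked below), which yields the joint covariance of $(f(t_0),\nabla f(t_0),\nabla^2 f(t_0))$. On the sphere one reads off $\mathrm{Var}(f_{ii})=C'+3C''$, $\mathrm{Cov}(f_{ii},f_{jj})=C''$ for $i\neq j$, $\mathrm{Cov}(f,f_{ii})=-C'$, while the off-diagonal entries $f_{ij}$, $i<j$, are i.i.d.\ with variance $C''$ and independent of everything else. Conditioning on $f(t_0)=x$ and $\nabla f(t_0)=0$, the Hessian $\nabla^2 f(t_0)$ is Gaussian whose distribution admits the representation
\begin{equation*}
\nabla^2 f(t_0) \stackrel{d}{=} \sqrt{2C''}\,G_N - C' x\, I_N + \xi I_N,
\end{equation*}
where $G_N$ is an $N\times N$ GOE, and $\xi$ is an independent centered Gaussian of variance $C''+C'-C'^2$, nonnegative precisely under $({\bf C}3')$.

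From there, I would invoke the sphere analogue of Lemma \ref{Lem:GOE for det Hessian} to absorb the rank-one shift $\xi I_N$ into an additional GOE eigenvalue. When $C''+C'-C'^2>0$, writing $|\det(\cdot)|\mathbbm{1}_{\{\mathrm{index}=N\}}=\prod_i|\lambda_i|\mathbbm{1}_{\{\lambda_i<0\}}$ and integrating against the Gaussian density of $\xi$, the extra factor $\prod_i|\lambda_i-\lambda_{N+1}|$ appearing in $Q_{N+1}$ of \eqref{Eq:GOE density} exactly reproduces the Gaussian integrand; completing the square in $\xi$ produces the exponential factor $\exp[\lambda_{N+1}^2/2-C''(\lambda_{N+1}-C'x/\sqrt{2C''})^2/(C''+C'-C'^2)]$ together with the prefactor $\sqrt{(C''+C')/(C''+C'-C'^2)}$ from the Jacobian of the change of variable $\lambda_{N+1}\mapsto \xi$. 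In the degenerate case $C''+C'-C'^2=0$, $\xi\equiv 0$ and the auxiliary eigenvalue is not needed; the GOE expectation stays in dimension $N$ with integrand $\prod_{i=1}^N|\lambda_i-C'x/\sqrt{2C''}|\,\mathbbm{1}_{\{\lambda_N<C'x/\sqrt{2C''}\}}$, the indicator enforcing the local-maximum condition on $\nabla^2 f(t_0)$. The denominator of $F_{t_0}(u)$ is then obtained by integrating the numerator's integrand against $\phi(x)$ over all of $\mathbb{R}$; a direct Gaussian computation in $x$ reduces the quadratic form $-x^2/2-C''(\lambda_{N+1}-C'x/\sqrt{2C''})^2/(C''+C'-C'^2)$ to $-C''\lambda_{N+1}^2/(C''+C')$ up to a normalization constant that matches the stated prefactors.

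The main obstacle will be making the conditional-covariance computation precise on the sphere, since the Hessian is defined through the Levi-Civita connection and a curvature correction modifies the variance of the diagonal entries (producing the extra $C'$ in $\mathrm{Var}(f_{ii})=C'+3C''$ and thus in $C''+C'-C'^2$). Once this is carried out via Lemma \ref{Lem:joint distribution sphere} and shown to match the GOE + rank-one-shift decomposition above, the remainder of the proof is routine GOE algebra parallel to the Euclidean case, and condition $({\bf C}3')$ appears naturally as the nonnegativity of the variance of the auxiliary Gaussian variable $\xi$.
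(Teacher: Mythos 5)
Your proposal is correct and follows essentially the same route as the paper: the general manifold formula of Theorem \ref{Thm:Palm distr manifolds}, the covariance structure of Lemma \ref{Lem:joint distribution sphere}, the GOE-plus-scalar-shift representation of the conditional Hessian (Lemma \ref{Lem:GOE for det Hessian sphere}), and Auffinger's extra-eigenvalue identity (Lemma \ref{Lem:GOE computation}); your only deviation is obtaining the denominator by integrating the conditional expectation $A(x)$ against $\phi(x)$ over all of $\R$ rather than via the paper's separate unconditional computation in Lemma \ref{Lem:expectation of local max sphere}, and the resulting Gaussian integral in $x$ does reproduce the stated prefactor. One slip to fix: from \eqref{Eq:cov of derivatives sphere}, ${\rm Cov}(f_{ii},f_{jj})=C''+C'$ for $i\neq j$ (not $C''$); this does not propagate, since the representation $\sqrt{2C''}\,G_N+\xi I_N-C'xI_N$ with ${\rm Var}(\xi)=C''+C'-C'^2$ that you actually use is the correct one.
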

\begin{proof}\ The result follows from applying Theorem \ref{Thm:Palm distr manifolds} and Lemmas \ref{Lem:expectation of local max sphere} and \ref{Lem:expectation of local max above u sphere} below.
\end{proof}

\begin{remark}
The formula in Theorem \ref{Thm:Palm distr sphere} shows that for isotropic Gaussian fields over $\mathbb{S}^N$, $F_{t_0}(u)$ depends on both $C'$ and $C''$. Therefore, we may write $F_{t_0}(u)$ as $F_{t_0}(u, C', C'')$. As a consequence of Lemma \ref{Lem:GOE for det Hessian sphere}, $F_{t_0}(u, C', C'')$ is continuous in $C'$ and $C''$, hence the formula for the case of $C''+C'-C'^2= 0$ can also be derived by taking the limit $\lim_{C''+C'-C'^2 \downarrow 0}F_{t_0}(u, C', C'')$.
\end{remark}

\begin{example} Let $N=2$. Applying Proposition \ref{Prop:GOE expectation for N=2} with $a=\frac{C''}{C''+C'}$ and $b=0$ gives
\begin{equation}\label{Eq:GOE expectation for a=1 N=2 sphere}
\E_{GOE}^{N+1}\bigg\{ \exp\bigg[\frac{1}{2}\la_{N+1}^2 -\frac{C''}{C''+C'}\la_{N+1}^2 \bigg] \bigg\}= \frac{\sqrt{2}}{2}\bigg\{\frac{C'}{2C''}\Big( \frac{C''+C'}{C''}\Big)^{1/2}+\Big( \frac{C''+C'}{3C''+C'}\Big)^{1/2}\bigg\}.
\end{equation}
Applying Proposition \ref{Prop:GOE expectation for N=2} again with $a=\frac{C''}{C''+C'-C'^2}$ and $b=\frac{C'x}{\sqrt{2C''}}$, one has
\begin{equation}\label{Eq:GOE expectation for a and b N=2 sphere}
\begin{split}
&\quad \E_{GOE}^{N+1}\bigg\{ \exp\bigg[\frac{\la_{N+1}^2}{2} - \frac{C''\Big(\la_{N+1}-\frac{C'x}{\sqrt{2C''}} \Big)^2}{C''+C'-C'^2} \bigg]\bigg\}\\
&=\frac{1}{\pi\sqrt{2}}\Big( \frac{C''+C'-C'^2}{C''} \Big)^{1/2}\bigg\{ \frac{C'^2(x^2-1)+C'}{C''}\pi\Phi\Big(\frac{C'x}{\sqrt{2C''+C'-C'^2}} \Big) \\
&\quad+ \frac{xC'\sqrt{2C''+C'-C'^2}}{C''\sqrt{2}}\sqrt{\pi}e^{-\frac{C'^2x^2}{2(2C''+C'-C'^2)}} \\
&\quad + \frac{2\pi\sqrt{C''}}{\sqrt{3C''+C'-C'^2}}e^{-\frac{C'^2x^2}{2(3C''+C'-C'^2)}}\Phi\Big(\frac{xC'\sqrt{C''}}{\sqrt{(2C''+C'-C'^2)(3C''+C'-C'^2)}} \Big) \bigg\}.
\end{split}
\end{equation}
Let $h(x)$ be the density function of the distribution of the height of a local maximum, i.e. $h(x)=-F'_{t_0}(x)$. By Theorem \ref{Thm:Palm distr sphere}, together with (\ref{Eq:GOE expectation for a=1 N=2 sphere}) and (\ref{Eq:GOE expectation for a and b N=2 sphere}), we obtain
\begin{equation}\label{Eq:h on S^2}
\begin{split}
h(x)&=\bigg(\frac{C'}{2C''}+\Big(\frac{C''}{3C''+C'}\Big)^{1/2}\bigg)^{-1} \bigg\{ \frac{C'^2(x^2-1)+C'}{C''}\phi(x)\Phi\Big(\frac{C'x}{\sqrt{2C''+C'-C'^2}} \Big) \\
&\quad+ \frac{xC'\sqrt{2C''+C'-C'^2}}{2\pi C''}e^{-\frac{(2C''+C')x^2}{2(2C''+C'-C'^2)}} \\
&\quad + \frac{\sqrt{2C''}}{\sqrt{\pi}\sqrt{3C''+C'-C'^2}}e^{-\frac{(3C''+C')x^2}{2(3C''+C'-C'^2)}} \Phi\Big(\frac{xC'\sqrt{C''}}{\sqrt{(2C''+C'-C'^2)(3C''+C'-C'^2)}} \Big) \bigg\},
\end{split}
\end{equation}
and hence $F_{t_0}(u)=\int_u^\infty h(x)dx$.
Figure \ref{Fig:h on S^2} shows several examples. The extreme case of $({\bf C}3')$, $C''+C'-C'^2=0$, is obtained when $C(t,s) = \l t, s \r^n$, $n \ge 2$. Shown in solid red is the case $n=2$, which simplifies to
\[
h(x) = (2x^2-1)\phi(x)\Phi(\sqrt{2}x) + \frac{x \sqrt{2}}{2\pi} e^{-\frac{3x^2}{2}} + \frac{1}{\sqrt{\pi}}e^{-x^2} \Phi(x).
\]
\begin{figure}[h!]
  \centering
\includegraphics[scale=0.5]{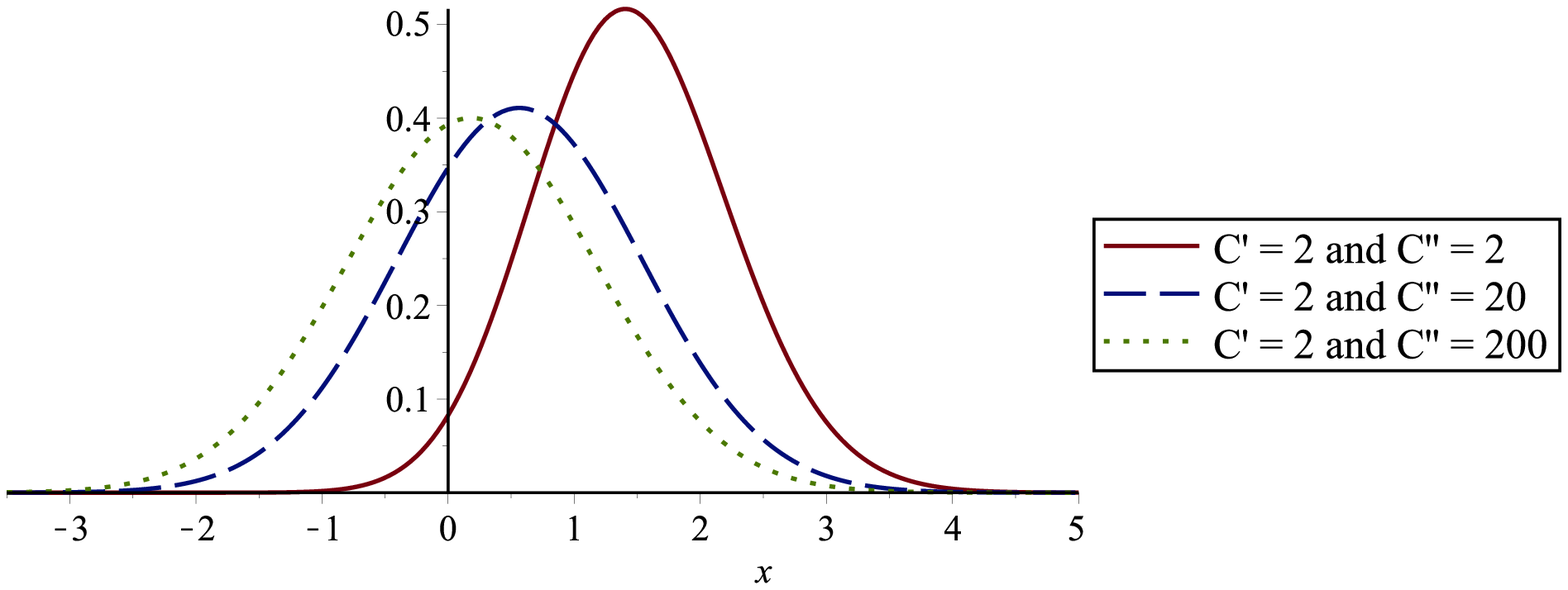}
\caption{Density function $h(x)$ of the distribution $F_{t_0}$ for isotropic Gaussian fields on $\mathbb{S}^2$.}
\label{Fig:h on S^2}
\end{figure}
It can be seen from both (\ref{Eq:h on S^2}) and Figure~\ref{Fig:h on S^2} that $h(x) \to \phi(x)$ if $ \max (C', C'^2)/C'' \to 0$.
\end{example}

\begin{theorem}\label{Thm:Overshoot sphere exact}
 Let $\{f(t): t\in \mathbb{S}^N\}$, $N\ge 2$, be a centered, unit-variance, isotropic Gaussian field satisfying $({\bf C}1'')$, $({\bf C}2')$ and $({\bf C}3')$. Then for each $t_0\in \mathbb{S}^N$ and $u, v>0$,
\begin{equation*}
\begin{split}
\bar{F}_{t_0}(u,v)= \left\{
  \begin{array}{l l}
     \frac{\int_{u+v}^\infty \phi(x)\E_{GOE}^{N+1}\Big\{ \exp\Big[\frac{\la_{N+1}^2}{2} - \frac{C''\big(\la_{N+1}-\frac{C'x}{\sqrt{2C''}} \big)^2}{C''+C'-C'^2} \Big]\Big\}dx}{\int_{v}^\infty \phi(x)\E_{GOE}^{N+1}\Big\{ \exp\Big[\frac{\la_{N+1}^2}{2} - \frac{C''\big(\la_{N+1}-\frac{C'x}{\sqrt{2C''}} \big)^2}{C''+C'-C'^2} \Big]\Big\}dx} & \quad \text{if $C''+C'-C'^2> 0$},\\
     \frac{\int_{u+v}^\infty \phi(x)\E_{GOE}^{N}\big\{ \big(\prod_{i=1}^N|\la_i-\frac{C'x}{\sqrt{2C''}}|\big) \mathbbm{1}_{\{\la_N<\frac{C'x}{\sqrt{2C''}}\}} \big\} dx}{\int_v^\infty \phi(x)\E_{GOE}^{N}\big\{ \big(\prod_{i=1}^N|\la_i-\frac{C'x}{\sqrt{2C''}}|\big) \mathbbm{1}_{\{\la_N<\frac{C'x}{\sqrt{2C''}}\}} \big\} dx} & \quad \text{if $C''+C'-C'^2= 0$},
   \end{array} \right.
\end{split}
\end{equation*}
where $C'$ and $C''$ are defined in (\ref{Def:C' and C''}).
\end{theorem}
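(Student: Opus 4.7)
The plan is to mirror, in the spherical setting, the proof of the Euclidean overshoot result Theorem \ref{Thm:overshoot isotropic}. The structural identity between the manifold formula in Theorem \ref{Thm:Palm distr manifolds} and its Euclidean predecessor Theorem \ref{Thm:overshoot distr} means that all of the hard analytic work has already been done; the remaining task is to insert the sphere-specific GOE representation of the conditional expectation.

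First I would start from the second equation of Theorem \ref{Thm:Palm distr manifolds} applied to our $t_0 \in \mathbb{S}^N$ and the pair $(u,v)$, namely
\begin{equation*}
\bar{F}_{t_0}(u,v) = \frac{\E\{|{\rm det} \nabla^2 f(t_0)|\mathbbm{1}_{\{f(t_0)> u+v\}} \mathbbm{1}_{\{{\rm index}(\nabla^2 f(t_0))=N\}}|\nabla f(t_0)=0\}}{\E\{|{\rm det} \nabla^2 f(t_0)|\mathbbm{1}_{\{f(t_0)> v\}} \mathbbm{1}_{\{{\rm index}(\nabla^2 f(t_0))=N\}}|\nabla f(t_0)=0\}}.
\end{equation*}
Next, since $f(t_0)$ is standard Gaussian and, under the isotropy assumption, is independent of $\nabla f(t_0)$ (this independence follows from Lemma \ref{Lem:joint distribution sphere}), I would condition additionally on the value $f(t_0)=x$ and integrate against $\phi(x)$ over the appropriate ray, rewriting both numerator and denominator as
\begin{equation*}
\int_{\ast}^\infty \phi(x)\, \E\bigl\{|{\rm det} \nabla^2 f(t_0)|\mathbbm{1}_{\{{\rm index}(\nabla^2 f(t_0))=N\}}\,\bigm|\, f(t_0)=x,\ \nabla f(t_0)=0\bigr\}\,dx,
\end{equation*}
with $\ast=u+v$ above and $\ast=v$ below.

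The crucial input is then Lemma \ref{Lem:expectation of local max above u sphere} (the same lemma invoked in the proof of Theorem \ref{Thm:Palm distr sphere}), which expresses the inner conditional expectation as an explicit GOE integral. This lemma splits into the two cases $C''+C'-C'^2>0$ and $C''+C'-C'^2=0$, producing respectively the $\E_{GOE}^{N+1}\{\exp[\la_{N+1}^2/2 - C''(\la_{N+1}-C'x/\sqrt{2C''})^2/(C''+C'-C'^2)]\}$ form and the $\E_{GOE}^{N}\{(\prod_{i=1}^N|\la_i-C'x/\sqrt{2C''}|)\mathbbm{1}_{\{\la_N < C'x/\sqrt{2C''}\}}\}$ form. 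Substituting into the displayed ratio, the $x$-independent normalizing factors cancel between numerator and denominator, yielding exactly the two formulas claimed in the theorem.

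There is no genuine obstacle: everything is formally identical to the Euclidean proof of Theorem \ref{Thm:overshoot isotropic}, with Theorem \ref{Thm:overshoot distr} replaced by Theorem \ref{Thm:Palm distr manifolds} and Lemma \ref{Lem:expectation of local max above u} replaced by Lemma \ref{Lem:expectation of local max above u sphere}. The only point that requires attention is verifying that the conditional independence of $f(t_0)$ from $\nabla f(t_0)$ at a point on the sphere holds under isotropy, so that the conditioning reduction in the integration step is justified; but this is exactly what the sphere covariance computation in Lemma \ref{Lem:joint distribution sphere} provides, and it is already used implicitly in the proof of Theorem \ref{Thm:Palm distr sphere}. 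Thus a one-line proof citing Theorem \ref{Thm:Palm distr manifolds} together with Lemma \ref{Lem:expectation of local max above u sphere} suffices.
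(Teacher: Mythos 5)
Your proposal is correct and follows exactly the same route as the paper: the paper's proof is a one-liner invoking Theorem \ref{Thm:Palm distr manifolds} and Lemma \ref{Lem:expectation of local max above u sphere}, and your unpacking of those two ingredients (conditioning on $f(t_0)=x$ via its independence from $\nabla f(t_0)$ in Lemma \ref{Lem:joint distribution sphere}, then canceling the $x$-independent prefactors in the ratio) is precisely what is implicit in that citation.
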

\begin{proof}\ The result follows immediately by applying Theorem \ref{Thm:Palm distr manifolds} and Lemma \ref{Lem:expectation of local max above u sphere}.
\end{proof}

Because the exact expression in Theorem \ref{Thm:Overshoot sphere exact} may be complicated for large $N$, we now derive a tight approximation to it, which is analogous to Corollary \ref{Cor:overshoot 1D} for the Euclidean case.

Let $\chi(A_u(f,\mathbb{S}^N))$ be the Euler characteristic of the excursion set $A_u(f,\mathbb{S}^N) = \{t\in \mathbb{S}^N: f(t)> u\}$. Let $\omega_j = {\rm Vol}(\mathbb{S}^j)$, the spherical area of the $j$-dimensional unit sphere $\mathbb{S}^j$, i.e., $\omega_j=2\pi^{(j+1)/2}/\Gamma(\frac{j+1}{2})$. The lemma below provides the formula for the expected Euler characteristic of the excursion set.
\begin{lemma}\label{Lem:MEC sphere} {\rm [Cheng and Xiao (2014)].} Let $\{f(t): t\in \mathbb{S}^N\}$, $N\ge 2$, be a centered, unit-variance, isotropic Gaussian field satisfying $({\bf C}1'')$ and $({\bf C}2')$. Then
\begin{equation*}
\begin{split}
\E\{\chi(A_u(f,\mathbb{S}^N))\} = \sum_{j=0}^N  (C')^{j/2} \mathcal{L}_j (\mathbb{S}^N) \rho_j(u),
\end{split}
\end{equation*}
where $C'$ is defined in (\ref{Def:C' and C''}), $\rho_0(u)= 1-\Phi(u)$, $\rho_j(u) = (2\pi)^{-(j+1)/2} H_{j-1} (u) e^{-u^2/2}$ with Hermite polynomials $H_{j-1}$ for $j\geq 1$ and, for $j=0, \ldots, N$,
\begin{equation}\label{Eq:L-K curvature}
\begin{split}
\mathcal{L}_j (\mathbb{S}^N) = \left\{
  \begin{array}{l l}
     2 \binom{N}{j}\frac{\omega_N}{\omega_{N-j}} & \quad \text{if $N-j$ is even}\\
     0 & \quad \text{otherwise}
   \end{array} \right.
\end{split}
\end{equation}
are the Lipschitz-Killing curvatures of $\mathbb{S}^N$.
\end{lemma}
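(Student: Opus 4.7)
The plan is to invoke the Gaussian Kinematic Formula (GKF) of Adler and Taylor (2007, Theorem 15.9.5), which says that for a centered, unit-variance, $C^2$ Gaussian field $f$ on a compact $C^3$ manifold $M$ satisfying the standard Morse and non-degeneracy hypotheses,
\begin{equation*}
\E\{\chi(A_u(f,M))\} = \sum_{j=0}^{\dim M} \mathcal{L}_j^f(M)\, \rho_j(u),
\end{equation*}
where $\mathcal{L}_j^f$ denotes the $j$-th Lipschitz--Killing curvature of $M$ computed in the Riemannian metric $g_f$ induced by $f$, defined by $g_f(X,Y)(t)=\E\{(Xf)(t)(Yf)(t)\}$, and $\rho_j$ are the Gaussian Minkowski functionals of the half-line $[u,\infty)$. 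Smoothness sufficient for the GKF is supplied by $({\bf C}1'')$ together with Lemma \ref{Lem:C^3 sphere}; non-degeneracy comes from $({\bf C}2')$.

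First I would identify the induced metric $g_f$ on $\mathbb{S}^N$. Expanding $C(t,s)=\sum_n a_n P_n^\lambda(\langle t,s\rangle)$ and differentiating once in $t$ and once in $s$ along a local orthonormal frame of the round metric $g_0$, the computation carried out in Lemma \ref{Lem:joint distribution sphere} yields $\E\{f_i(t)f_j(t)\}=C'\,\delta_{ij}$, so that $g_f = C'\,g_0$ pointwise on $\mathbb{S}^N$. I would then apply the conformal scaling property of Lipschitz--Killing curvatures, $\mathcal{L}_j(M,c\,g_0) = c^{j/2}\mathcal{L}_j(M,g_0)$, to obtain $\mathcal{L}_j^f(\mathbb{S}^N) = (C')^{j/2}\,\mathcal{L}_j(\mathbb{S}^N)$, with $\mathcal{L}_j(\mathbb{S}^N)$ computed in the round metric.

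Next I would substitute the two known closed-form ingredients. The Lipschitz--Killing curvatures of the round sphere are the classical expressions in \eqref{Eq:L-K curvature}, vanishing when $N-j$ is odd by parity of integrals of odd powers of the unit normal against the second fundamental form. The Gaussian Minkowski functionals of $[u,\infty)$ are the familiar $\rho_0(u)=1-\Phi(u)$ and $\rho_j(u)=(2\pi)^{-(j+1)/2}H_{j-1}(u)e^{-u^2/2}$ for $j\ge 1$. Plugging both into the GKF gives the claimed identity. I expect the main obstacle to be not any of this algebra but verifying that the technical hypotheses of the GKF (Morse non-degeneracy of $f|_{\mathbb{S}^N}$, continuity and boundedness of the conditional joint densities of $(f(t),\nabla f(t),\nabla^2 f(t))$ near critical points, and finiteness of the associated Kac--Rice moments) really are implied by $({\bf C}1'')$ and $({\bf C}2')$; once this is in hand, the identification $g_f = C' g_0$ and the scaling step give the formula immediately.
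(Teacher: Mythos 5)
The paper does not prove this lemma itself: it imports it verbatim from Cheng and Xiao (2014), as signaled by the citation in the lemma heading, so there is no in-paper argument to compare against. That said, your proposed derivation is sound and is essentially the standard route one would take. The chain of reasoning---invoke the Gaussian Kinematic Formula for $\E\{\chi(A_u)\}$, observe from Lemma~\ref{Lem:joint distribution sphere} that the metric induced by $f$ on $\mathbb{S}^N$ is $g_f = C'\,g_0$ (a constant multiple of the round metric, because the field is isotropic, so this is a genuine homothety rather than a position-dependent conformal change), use the homogeneity $\mathcal{L}_j(M, c\,g_0) = c^{j/2}\mathcal{L}_j(M,g_0)$, and then substitute the classical values of the Lipschitz--Killing curvatures of the round sphere and the Gaussian EC densities $\rho_j$---is exactly how one proves this formula, and it is consistent with what the cited reference does. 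Your closing caveat, that $({\bf C}1'')$ (via Lemma~\ref{Lem:C^3 sphere}) and $({\bf C}2')$ must be checked to supply the regularity hypotheses of the GKF (a.s.\ Morse property, non-degenerate joint densities, finite Kac--Rice moments), is precisely the right place to be careful; Lemma~\ref{Lem:C^3 sphere} gives $C^5$ covariance which is more than enough smoothness, and $({\bf C}2')$ is the non-degeneracy condition used throughout the paper for Kac--Rice arguments, so these hypotheses do hold. No gap.
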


\begin{theorem}\label{Thm:overshoot sphere} Let $\{f(t): t\in \mathbb{S}^N\}$, $N\ge 2$, be a centered, unit-variance, isotropic Gaussian field satisfying $({\bf C}1'')$ and $({\bf C}2')$. Then for each $t_0\in \mathbb{S}^N$ and each fixed $u>0$, there exists $\alpha>0$ such that as $v\to \infty$,
\begin{equation}\label{Eq:overshoot sphere}
\begin{split}
\bar{F}_{t_0}(u,v)=\frac{\sum_{j=0}^N  (C')^{j/2} \mathcal{L}_j (\mathbb{S}^N) \rho_j(u+v)}{\sum_{j=0}^N  (C')^{j/2} \mathcal{L}_j (\mathbb{S}^N) \rho_j(v)}(1+o(e^{-\alpha v^2})),
\end{split}
\end{equation}
where $C'$ is defined in (\ref{Def:C' and C''}), $\rho_j(u)$ and $\mathcal{L}_j (\mathbb{S}^N)$ are as in Lemma \ref{Lem:MEC sphere}.
\end{theorem}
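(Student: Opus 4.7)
The plan is to reduce the claim to an identity between the conditional-expectation integral appearing in Theorem \ref{Thm:Palm distr manifolds} and the Gaussian kinematic expansion in Lemma \ref{Lem:MEC sphere}. By Theorem \ref{Thm:Palm distr manifolds} applied to the manifold $M=\mathbb{S}^N$, for any fixed $u>0$ there exists $\alpha>0$ such that, as $v\to\infty$,
\begin{equation*}
\bar{F}_{t_0}(u,v)=\frac{\int_{u+v}^\infty \phi(x)\,\E\{{\rm det}\nabla^2 f(t_0)\mid f(t_0)=x,\ \nabla f(t_0)=0\}\,dx}{\int_v^\infty \phi(x)\,\E\{{\rm det}\nabla^2 f(t_0)\mid f(t_0)=x,\ \nabla f(t_0)=0\}\,dx}\bigl(1+o(e^{-\alpha v^2})\bigr).
\end{equation*}
So it suffices to identify the numerator and denominator, up to a common multiplicative constant independent of the integration limit, with the spherical Euler characteristic expansion evaluated at $u+v$ and $v$ respectively.

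To make that identification, I would appeal to the Morse-theoretic representation of $\chi(A_w(f,\mathbb{S}^N))$ and the Kac--Rice formula on a closed manifold. Because $\mathbb{S}^N$ has no boundary, Morse theory applied to $-f$ gives $\chi(A_w(f,\mathbb{S}^N))=\sum_{\nabla f(t)=0,\,f(t)>w}(-1)^{N-{\rm index}(\nabla^2 f(t))}$ almost surely, and $(-1)^{N-{\rm index}(\nabla^2 f(t))}|{\rm det}\nabla^2 f(t)|=(-1)^N{\rm det}\nabla^2 f(t)$. Hence Kac--Rice yields
\begin{equation*}
\E\{\chi(A_w(f,\mathbb{S}^N))\}
=(-1)^N\!\int_{\mathbb{S}^N}\! p_{\nabla f(t)}(0)\!\int_w^\infty\!\phi(x)\,\E\{{\rm det}\nabla^2 f(t)\mid f(t)=x,\nabla f(t)=0\}\,dx\,dt.
\end{equation*}
Isotropy makes the inner integrand independent of $t$, so the right-hand side equals a constant $K:=(-1)^N\mathrm{Vol}(\mathbb{S}^N)p_{\nabla f(t_0)}(0)$ times $\int_w^\infty\phi(x)\,\E\{{\rm det}\nabla^2 f(t_0)\mid f(t_0)=x,\nabla f(t_0)=0\}\,dx$.

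Combining this with Lemma \ref{Lem:MEC sphere} gives, for every $w\in\R$,
\begin{equation*}
\int_w^\infty\phi(x)\,\E\{{\rm det}\nabla^2 f(t_0)\mid f(t_0)=x,\nabla f(t_0)=0\}\,dx
=\frac{1}{K}\sum_{j=0}^N (C')^{j/2}\mathcal{L}_j(\mathbb{S}^N)\rho_j(w).
\end{equation*}
Substituting this identity with $w=u+v$ into the numerator and $w=v$ into the denominator of the displayed ratio above, the constant $K$ cancels, and one obtains \eqref{Eq:overshoot sphere}.

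The main technical point is the Morse-theoretic step for $\chi(A_w)$: for a super-level set one has to handle the $(-1)^{N-\mu}$ weighting and justify the Kac--Rice calculation under $(\mathbf{C}1'')$ and $(\mathbf{C}2')$. This is standard on a closed manifold (see Adler and Taylor 2007), and Lemma \ref{Lem:C^3 sphere} together with $(\mathbf{C}2')$ supplies the necessary smoothness and non-degeneracy. Everything else is bookkeeping: the integrand is constant in $t$ by isotropy, so the unknown factor $\mathrm{Vol}(\mathbb{S}^N)p_{\nabla f(t_0)}(0)$ appears symmetrically in numerator and denominator and disappears from the ratio, and the error bound $1+o(e^{-\alpha v^2})$ is inherited directly from Theorem \ref{Thm:Palm distr manifolds}.
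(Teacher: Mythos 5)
Your proposal is correct and follows essentially the same route as the paper: reduce via Theorem \ref{Thm:Palm distr manifolds}, use isotropy to replace the pointwise conditional-expectation integral by its average over $\mathbb{S}^N$, identify that average with $\E\{\chi(A_w(f,\mathbb{S}^N))\}$ via the Kac--Rice/Morse representation, and conclude with Lemma \ref{Lem:MEC sphere}. The only difference is that you spell out the Morse-theoretic sign bookkeeping that the paper delegates to Adler and Taylor (2007, pp.~315--316).
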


\begin{remark}
Note that \eqref{Eq:overshoot sphere} depends on the covariance function only through its first derivative $C'$. In comparison with Corollary \ref{Cor:overshoot 1D} for the Euclidean case, there we only have the highest order term of the expected Euler characteristic expansion because we do not consider the boundaries of $T$. On the sphere, we need all terms in the expansion since sphere has no boundary.
\end{remark}

\begin{proof}\ By Theorem \ref{Thm:Palm distr manifolds},
\begin{equation*}
\bar{F}_{t_0}(u,v)= \frac{\int_{u+v}^\infty \phi(x) \E\{{\rm det} \nabla^2 f(t_0)|f(t_0)=x, \nabla f(t_0)=0\}dx}{\int_v^\infty \phi(x) \E\{{\rm det} \nabla^2 f(t_0)|f(t_0)=x, \nabla f(t_0)=0\}dx}(1+o(e^{-\alpha v^2})).
\end{equation*}
Since $f$ is isotropic, integrating the numerator and denominator above over $\mathbb{S}^N$, we obtain
\begin{equation*}
\begin{split}
\bar{F}_{t_0}(u,v)&= \frac{\int_{\mathbb{S}^N}\int_{u+v}^\infty \phi(x) \E\{{\rm det} \nabla^2 f(t)|f(t)=x, \nabla f(t)=0\}dx dt}{\int_{\mathbb{S}^N} \int_v^\infty \phi(x) \E\{{\rm det} \nabla^2 f(t)|f(t)=x, \nabla f(t)=0\}dxdt}(1+o(e^{-\alpha v^2}))\\
&=\frac{\E\{\chi(A_{u+v}(f,\mathbb{S}^N))\}}{\E\{\chi(A_v(f,\mathbb{S}^N))\}}(1+o(e^{-\alpha v^2})),
\end{split}
\end{equation*}
where the last line comes from applying the Kac-Rice Metatheorem to the Euler characteristic of the excursion set, see Adler and Taylor (2007, pp. 315-316).  The result then follows from Lemma \ref{Lem:MEC sphere}.
\end{proof}

\section{Proofs and Auxiliary Results}\label{Section:proofs of main results}
\subsection{Proofs for Section \ref{Section:general Euclidean}}
For $u>0$, let $\mu(t_0, \ep)$, $\mu_N(t_0, \ep)$, $\mu_N^u(t_0, \ep)$ and $\mu_N^{u-}(t_0, \ep)$ be the number of critical points, the number of local maxima, the number of local maxima above $u$ and the number of local maxima below $u$ in $U_{t_0}(\ep)$ respectively. More precisely,
\begin{equation}\label{Def:various mu's}
\begin{split}
\mu(t_0, \ep) &=\#\{t\in U_{t_0}(\ep): \nabla f(t)=0\},\\
\mu_N(t_0, \ep) &=\#\{t\in U_{t_0}(\ep): \nabla f(t)=0, {\rm index}(\nabla^2 f(t))=N\},\\
\mu_N^u(t_0, \ep) &=\#\{t\in U_{t_0}(\ep): f(t)> u, \nabla f(t)=0, {\rm index}(\nabla^2 f(t))=N\},\\
\mu_N^{u-}(t_0, \ep) &=\#\{t\in U_{t_0}(\ep): f(t)\le u, \nabla f(t)=0, {\rm index}(\nabla^2 f(t))=N\},
\end{split}
\end{equation}
where ${\rm index}(\nabla^2 f(t))$ is the number of negative eigenvalues of $\nabla^2 f(t)$.

In order to prove Theorem \ref{Thm:Palm distr}, we need the following lemma which shows that, for the number of critical points over the cube of lengh $\ep$, its factorial moment decays faster than the expectation as $\ep$ tends to 0. Our proof is based on similar arguments in the proof of Lemma 3 in Piterbarg (1996).
\begin{lemma}\label{Lem:Piterbarg} Let $\{f(t): t\in T\}$ be a Gaussian random field satisfying $({\bf C}1)$ and $({\bf C}2)$. Then for each fixed $t_0\in \overset{\circ}{T}$, as $\ep\to 0$,
\begin{equation*}
\E \{\mu(t_0, \ep)(\mu(t_0, \ep) -1)\}= o(\ep^N).
\end{equation*}
\end{lemma}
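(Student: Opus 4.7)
The plan is to apply the Kac--Rice formula to the second factorial moment of the number of critical points, then exploit a Taylor expansion together with the H\"older condition $({\bf C}1)$ to extract an extra $\|s-t\|^{2\eta}$ from the conditional expectation of the determinant product, which will beat the $\|s-t\|^{-N}$ blow-up of the joint density of $(\nabla f(t),\nabla f(s))$ on the diagonal.

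First, using $({\bf C}1)$ and $({\bf C}2)$ to justify the hypotheses of Kac--Rice for the number of zeros of $\nabla f$, we write
\begin{equation*}
\E\{\mu(t_0,\ep)(\mu(t_0,\ep)-1)\}=\int_{U_{t_0}(\ep)^2} \E\bigl\{|{\rm det}\nabla^2 f(t)||{\rm det}\nabla^2 f(s)|\,\bigm|\,\nabla f(t)=\nabla f(s)=0\bigr\}\, p_{\nabla f(t),\nabla f(s)}(0,0)\, dt\, ds.
\end{equation*}
Next, for $t\ne s$ set $\theta=(s-t)/\|s-t\|$ and perform the linear change of variables $(\nabla f(t),\nabla f(s))\mapsto(\nabla f(t),W(t,s))$ with $W(t,s):=(\nabla f(s)-\nabla f(t))/\|s-t\|$, whose Jacobian factor is $\|s-t\|^{N}$. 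By Taylor's formula $W(t,s)=A(t,s)\theta$ with $A(t,s)=\int_0^1 \nabla^2 f(t+\tau(s-t))\,d\tau$, so $W(t,s)\to\nabla^2 f(t)\theta$ as $s\to t$ and the joint density of $(\nabla f(t),W(t,s))$ at $(0,0)$ is uniformly bounded on a neighborhood of the diagonal by $({\bf C}2)$. This yields
\begin{equation*}
p_{\nabla f(t),\nabla f(s)}(0,0)\le C_1\|s-t\|^{-N}.
\end{equation*}

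The key step, and the main obstacle, is to show that the conditional expectation in the Kac--Rice formula is $O(\|s-t\|^{2\eta})$. Conditioning on $\nabla f(t)=\nabla f(s)=0$ is equivalent to conditioning on $\nabla f(t)=0$ and $A(t,s)\theta=0$. Writing $\nabla^2 f(t)\theta=\bigl(\nabla^2 f(t)-A(t,s)\bigr)\theta$, the H\"older condition $({\bf C}1)$ gives $\E\|\nabla^2 f(t)\theta\|^2\le C_2\|s-t\|^{2\eta}$ under this conditioning, and the symmetric estimate holds for $\nabla^2 f(s)\theta$. Because $\|\theta\|=1$ forces the eigenvector of $\nabla^2 f(t)$ best aligned with $\theta$ to have $(v_i^\top\theta)^2\ge 1/N$, the corresponding eigenvalue is bounded by a constant times $\|\nabla^2 f(t)\theta\|$. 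Hence
\begin{equation*}
|{\rm det}\nabla^2 f(t)|\le C_3\|\nabla^2 f(t)\theta\|\,\|\nabla^2 f(t)\|^{N-1},
\end{equation*}
and likewise for $s$. Applying Cauchy--Schwarz together with the conditional Gaussian moment bounds on the remaining factors (which are uniform near the diagonal by $({\bf C}2)$), we obtain
\begin{equation*}
\E\bigl\{|{\rm det}\nabla^2 f(t)||{\rm det}\nabla^2 f(s)|\,\bigm|\,\nabla f(t)=\nabla f(s)=0\bigr\}\le C_4\|s-t\|^{2\eta}.
\end{equation*}

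Finally, I would combine the two estimates and change to polar coordinates $r=\|s-t\|$, $\theta\in S^{N-1}$ inside the $s$-integral. The volume of $U_{t_0}(\ep)$ contributes $\ep^{N}$ from the $t$-integration, and the radial integrand is $r^{N-1}\cdot r^{-N}\cdot r^{2\eta}=r^{2\eta-1}$, which integrates to $O(\ep^{2\eta})$ on $(0,c\ep)$. Altogether
\begin{equation*}
\E\{\mu(t_0,\ep)(\mu(t_0,\ep)-1)\}\le C\,\ep^{N+2\eta}=o(\ep^{N}),
\end{equation*}
which is the claim. The main technical difficulty is the controlled degeneracy argument producing the $\|s-t\|^{2\eta}$ factor from $({\bf C}1)$; everything else is Kac--Rice bookkeeping and standard Gaussian estimates.
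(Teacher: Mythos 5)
Your proposal is correct and follows the same skeleton as the paper's proof: Kac--Rice for the second factorial moment, the bound $p_{\nabla f(t),\nabla f(s)}(0,0)\le C\|s-t\|^{-N}$ via the divided-difference change of variables, and a H\"older gain extracted from the conditional expectation of the determinants to beat the diagonal blow-up. Where you genuinely differ is in \emph{how} that gain is extracted. The paper observes that the conditioning forces $\nabla^2 f(t)e_{t,s}=-\|s-t\|^{\eta}\mathbf{Z}_{t,s}$ with $\mathbf{Z}_{t,s}$ bounded in $L^2$, and then runs a column-operation trick on an $(N+1)\times(N+1)$ augmented matrix to conclude $|{\rm det}\nabla^2 f(t)|\le\|s-t\|^{\eta}|{\rm det}A_{t,s}|$ with $A_{t,s}$ having uniformly bounded conditional moments; it extracts the factor from $t$ only, yielding $O(\ep^{N+\eta})$. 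You instead argue spectrally: under the conditioning $\nabla^2 f(t)\theta=(\nabla^2 f(t)-A(t,s))\theta$ is $O(\|s-t\|^{\eta})$ in $L^2$ by $({\bf C}1)$, some eigenvector satisfies $(v_i^\top\theta)^2\ge 1/N$, hence one eigenvalue is $O(\|\nabla^2 f(t)\theta\|)$ and $|{\rm det}\nabla^2 f(t)|\le C\|\nabla^2 f(t)\theta\|\,\|\nabla^2 f(t)\|^{N-1}$; applying this at both $t$ and $s$ gives the stronger bound $O(\ep^{N+2\eta})$. Both mechanisms are valid and either exponent suffices. Two small points you should make explicit to be airtight: (i) the passage from the unconditional bound $\E\|(\nabla^2 f(t)-A(t,s))\theta\|^2\le C\|s-t\|^{2\eta}$ to the conditional one uses that for Gaussians the conditional covariance is dominated by the unconditional covariance and that the conditional mean given the zero value is controlled (it vanishes in the centered case; in general it is $O(\|s-t\|^{\eta})$ because the cross-covariances with $X$ are, by Cauchy--Schwarz); and (ii) the uniform non-degeneracy of $(\nabla f(t),A(t,s)\theta)$ near the diagonal, needed both for the density bound and for $\Sigma_{YY}^{-1}$ to be bounded, follows from $({\bf C}2)$ together with the surjectivity of $S\mapsto S\theta$ on symmetric matrices and a compactness argument --- the same fact the paper uses when it asserts ${\rm det\,Cov}(\nabla f(t),\nabla^2 f(t)e_{t,s})$ is bounded away from zero.
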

\begin{proof}\ By the Kac-Rice formula for factorial moments [cf. Theorem 11.5.1 in Adler and Taylor (2007)],
\begin{equation}\label{Eq:factorial moment}
\begin{split}
\E\{\mu(t_0, \ep)(\mu(t_0, \ep) -1)\}=\int_{U_{t_0}(\ep)}\int_{U_{t_0}(\ep)}  E_1(t,s) p_{\nabla f(t), \nabla f(s)}(0,0) dtds,
\end{split}
\end{equation}
where
$$
E_1(t,s) = \E\{|{\rm det}\nabla^2 f(t)||{\rm det}\nabla^2 f(s)|| \nabla f(t)=\nabla f(s)=0\}.
$$
By Taylor's expansion,
\begin{equation}\label{Eq:Taylor expansion}
\nabla f(s)= \nabla f(t) + \nabla^2 f(t)(s-t)^T + \|s-t\|^{1+\eta}\mathbf{Z}_{t,s},
\end{equation}
where $\mathbf{Z}_{t,s}=(Z_{t,s}^1, \ldots, Z_{t,s}^N)^T$ is a Gaussian vector field, with properties to be specified. In particular, by condition ({\bf C}1), for $\ep$ small enough,
\begin{equation*}
\sup_{t,s\in U_{t_0}(\ep), \, t\ne s}\E\|\mathbf{Z}_{t,s}\|^2 \leq C_1,
\end{equation*}
where $C_1$ is some positive constant. Therefore, we can write
$$
E_1(t,s) = \E\{|{\rm det}\nabla^2 f(t)||{\rm det}\nabla^2 f(s)|| \nabla f(t)=0, \nabla^2 f(t)(s-t)^T =- \|s-t\|^{1+\eta}\mathbf{Z}_{t,s}\}.
$$
Note that the determinant of the matrix $\nabla^2 f(t)$ is equal to the determinant of the matrix
\begin{equation*}
\begin{split}
\left(
\begin{array}{cccc}
1 & -(s_1-t_1) & \cdots & -(s_N-t_N) \\
0 &  &  & \\
\vdots & & \nabla^2 f(t) &\\
0 &  &  &
\end{array}
\right).
\end{split}
\end{equation*}
For any $i=2,\ldots, N+1$, multiply the $i$th column of this matrix by $(s_i-t_i)/\|s_i-t_i\|^2$, take the sum of all such columns and add the result to the first column, obtaining the matrix
\begin{equation*}
\begin{split}
\left(
\begin{array}{cccc}
0 & -(s_1-t_1) & \cdots & -(s_N-t_N) \\
-\|s-t\|^{-1+\eta}Z_{t,s}^1 &  &  & \\
\vdots & & \nabla^2 f(t) &\\
-\|s-t\|^{-1+\eta}Z_{t,s}^N &  &  &
\end{array}
\right),
\end{split}
\end{equation*}
whose determinant is still equal to the determinant of $\nabla^2 f(t)$. Let $r=\max_{1\leq i\leq N}|s_i-t_i|$,
\begin{equation*}
\begin{split}
A_{t,s}=\left(
\begin{array}{cccc}
0 & -(s_1-t_1)/r & \cdots & -(s_N-t_N)/r \\
Z_{t,s}^1 &  &  & \\
\vdots & & \nabla^2 f(t) &\\
Z_{t,s}^N &  &  &
\end{array}
\right).
\end{split}
\end{equation*}
Using properties of a determinant, it follows that
\begin{equation*}
|{\rm det} \nabla^2 f(t)|= r\|s-t\|^{-1+\eta}|{\rm det}A_{t,s}|\leq \|s-t\|^{\eta}|{\rm det}A_{t,s}|.
\end{equation*}
Let $e_{t,s}=(s-t)^T/\|s-t\|$, then we obtain
\begin{equation}\label{Eq:E1 and E2}
E_1(t,s)\leq \|s-t\|^{\eta}E_2(t,s),
\end{equation}
where
\begin{equation*}
\begin{split}
E_2(t,s) &= \E\{|{\rm det}A_{t,s}||{\rm det}\nabla^2 f(s)|| \nabla f(t)=0, \nabla^2 f(t)(s-t)^T =- \|s-t\|^{1+\eta}\mathbf{Z}_{t,s}\}\\
&= \E\{|{\rm det}A_{t,s}||{\rm det}\nabla^2 f(s)|| \nabla f(t)=0, \nabla^2 f(t)e_{t,s} + \|s-t\|^\eta\mathbf{Z}_{t,s} =0\}.
\end{split}
\end{equation*}
By ({\bf C}1) and ({\bf C}2), there exists $C_2>0$ such that
\begin{equation*}
\sup_{t,s\in U_{t_0}(\ep), \, t\ne s} E_2(t,s) \le C_2.
\end{equation*}
By (\ref{Eq:factorial moment}) and (\ref{Eq:E1 and E2}),
\begin{equation*}
\begin{split}
\E\{\mu(t_0, \ep)(\mu(t_0, \ep) -1)\}\le C_2\int_{U_{t_0}(\ep)}\int_{U_{t_0}(\ep)} \|s-t\|^{\eta} p_{\nabla f(t), \nabla f(s)}(0,0) dtds.
\end{split}
\end{equation*}

It is obvious that
\begin{equation*}
p_{\nabla f(t), \nabla f(s)}(0,0) \le \frac{1}{(2\pi)^N \sqrt{{\rm det Cov}(\nabla f(t), \nabla f(s))}}.
\end{equation*}
Applying Taylor's expansion (\ref{Eq:Taylor expansion}), we obtain that as $\|s-t\| \to 0$,
\begin{equation*}
\begin{split}
&{\rm det Cov}(\nabla f(t), \nabla f(s)) \\
&\quad = {\rm det Cov}(\nabla f(t), \nabla f(t) + \nabla^2 f(t)(s-t)^T + \|s-t\|^{1+\eta}\mathbf{Z}_{t,s})\\
&\quad = {\rm det Cov}(\nabla f(t), \nabla^2 f(t)(s-t)^T + \|s-t\|^{1+\eta}\mathbf{Z}_{t,s})\\
&\quad = \|s-t\|^{2N} {\rm det Cov}(\nabla f(t), \nabla^2 f(t)e_{t,s} + \|s-t\|^\eta\mathbf{Z}_{t,s})\\
&\quad = \|s-t\|^{2N} {\rm det Cov}(\nabla f(t), \nabla^2 f(t)e_{t,s})(1+o(1)),
\end{split}
\end{equation*}
where the last determinant is bounded away from zero uniformly in $t$ and $s$ due to the regularity condition ({\bf C}2). Therefore, there exists $C_3>0$ such that
\begin{equation*}
\begin{split}
\E \{\mu(t_0, \ep)(\mu(t_0, \ep) -1)\} \leq C_3\int_{U_{t_0}(\ep)}\int_{U_{t_0}(\ep)}\frac{1}{\|t-s\|^{N-\eta}}dtds,
\end{split}
\end{equation*}
where $C_3$ and $\eta$ are some positive constants. Recall the elementary inequality
$$
\frac{x_1+\cdots +x_N}{N} \geq (x_1\cdots x_N)^{1/N}, \quad \forall x_1, \ldots, x_N>0.
$$
It follows that
\begin{equation*}
\begin{split}
\E \{\mu(t_0, \ep)(\mu(t_0, \ep) -1)\} &\leq C_3N^{\eta-N}\int_{U_{t_0}(\ep)}\int_{U_{t_0}(\ep)}\prod_{i=1}^N |t_i-s_i|^{\frac{\eta}{N}-1}dtds\\ &= C_3N^{\eta-N} \bigg(\int_{-\ep/2}^{\ep/2} \int_{-\ep/2}^{\ep/2} |x-y|^{\frac{\eta}{N}-1}dxdy\bigg)^N \\
&=  C_3N^\eta\bigg(\frac{2N}{\eta(\eta+N)} \bigg)^N \ep^{N+\eta} = o(\ep^N).
\end{split}
\end{equation*}
\end{proof}

\begin{proof}{\bf of Theorem \ref{Thm:Palm distr}}\ By the definition in (\ref{Eq:Palm Ut0}),
\begin{equation}\label{Eq:conditional prob form of Palm}
\begin{split}
F_{t_0}(u) =\lim_{\ep\to 0} \frac{ \P\{f(t_0)>u, \mu_N(t_0, \ep)\geq 1\}}{\P\{ \mu_N(t_0, \ep)\geq 1\}}.
\end{split}
\end{equation}
Let $p_i=\P\{\mu_N(t_0, \ep)=i\}$, then $\P\{\mu_N(t_0, \ep) \geq 1 \} = \sum_{i=1}^\infty p_i$ and $\E \{\mu_N(t_0, \ep) \} = \sum_{i=1}^\infty ip_i$, it follows that
\begin{equation*}
\begin{split}
\E \{\mu_N(t_0, \ep) \} - \P\{\mu_N(t_0, \ep) \geq 1 \} &= \sum_{i=2}^\infty (i-1)p_i \\
&\leq \sum_{i=2}^\infty \frac{i(i-1)}{2}p_i = \frac{1}{2}\E \{\mu_N(t_0, \ep)(\mu_N(t_0, \ep) -1)\}.
\end{split}
\end{equation*}
Therefore, by Lemma \ref{Lem:Piterbarg}, as $\ep\to 0$,
\begin{equation}\label{Eq:estimate prob of local max}
\begin{split}
\P\{\mu_N(t_0, \ep) \geq 1 \} = \E \{\mu_N(t_0, \ep) \} + o(\ep^N).
\end{split}
\end{equation}
Similarly,
\begin{equation}\label{Eq:estimate prob of local max above u}
\begin{split}
\P\{\mu_N^u(t_0, \ep) \geq 1 \} = \E \{\mu_N^u(t_0, \ep) \} + o(\ep^N).
\end{split}
\end{equation}
Next we show that
\begin{equation}\label{Eq:difference in numerator}
\begin{split}
|\P\{f(t_0)>u, \mu_N(t_0, \ep)\geq 1\}-\P\{\mu_N^u(t_0, \ep) \geq 1 \}| = o(\ep^N).
\end{split}
\end{equation}
Roughly speaking, the probability that there exists a local maximum and the field exceeds $u$ at $t_0$ is approximately the same as the probability that there is at least one local maximum exceeding u. This is because in the limit, the local maximum occurs at $t_0$ and is greater than $u$. We show the rigorous proof below.

Note that for any evens $A$, $B$, $C$ such that $C\subset B$,
\begin{equation*}
|\P(AB)-\P(C)| \leq \P(ABC^c) + \P(A^cC).
\end{equation*}
By this inequality, to prove (\ref{Eq:difference in numerator}), it suffices to show
\begin{equation*}
\P\{f(t_0)>u, \mu_N(t_0, \ep)\geq 1, \mu_N^u(t_0, \ep) = 0\} + \P\{f(t_0)\leq u, \mu_N^u(t_0, \ep) \geq 1 \} = o(\ep^N),
\end{equation*}
where the first probability above is the probability that the field exceeds $u$ at $t_0$ but all local maxima are below $u$, while the second one is the probability that the field does not exceed $u$ at $t_0$ but all local maxima exceed $u$.

Recall the definition of $\mu_N^{u-}(t_0, \ep)$ in (\ref{Def:various mu's}), we have
\begin{equation}\label{Eq:the first small prob}
\begin{split}
\P\{f(t_0)>u, \mu_N(t_0, \ep)\geq 1, \mu_N^u(t_0, \ep) = 0\} &\leq \P\{f(t_0)>u, \mu_N^{u-}(t_0, \ep)\geq 1\}\\
&= \E \{\mu_N^{u-}(t_0, \ep)\mathbbm{1}_{\{f(t_0)>u\}} \} + o(\ep^N),
\end{split}
\end{equation}
where the second line follows from similar argument for showing (\ref{Eq:estimate prob of local max}). By the Kac-Rice metatheorem,
\begin{equation}\label{Eq:conditional expectation of mu}
\begin{split}
&\quad \E \{\mu_N^{u-}(t_0, \ep)\mathbbm{1}_{\{f(t_0)>u\}} \} =\int_u^\infty \E\{\mu_N^{u-}(t_0, \ep)|f(t_0)=x\}p_{f(t_0)}(x) dx\\
&= \int_u^\infty p_{f(t_0)}(x) dx\int_{U_{t_0}(\ep)} p_{\nabla f(t)}(0|f(t_0)=x) \\
&\quad \times \E\{|{\rm det} \nabla^2 f(t)|\mathbbm{1}_{\{{\rm index}(\nabla^2 f(t))=N\}}\mathbbm{1}_{\{f(t)\leq u\}} | \nabla f(t)=0, f(t_0)=x\}dt.
\end{split}
\end{equation}
By $({\bf C}1)$ and $({\bf C}2)$, for small $\ep>0$,
\begin{equation*}
\begin{split}
\sup_{t\in U_{t_0}(\ep)}p_{\nabla f(t)}(0|f(t_0)=x)\leq \sup_{t\in U_{t_0}(\ep)} \frac{1}{(2\pi)^{N/2} ({\rm det Cov}(\nabla f(t)|f(t_0)))^{1/2}}\leq C
\end{split}
\end{equation*}
for some positive constant $C$. On the other hand, by continuity, conditioning on $f(t_0)=x>u$, $\sup_{t\in U_{t_0}(\ep)}\mathbbm{1}_{\{f(t)\leq u\}}$ tends to 0 a.s. as $\ep \to 0$. Therefore, for each $x>u$, by the dominated convergence theorem (we may choose $\sup_{t\in U_{t_0}(\ep_0)} |{\rm det} \nabla^2 f(t)|$ as the dominating function for some $\ep_0>0$), as $\ep \to 0$,
\begin{equation*}
\begin{split}
\sup_{t\in U_{t_0}(\ep)}\E\{|{\rm det} \nabla^2 f(t)|\mathbbm{1}_{\{{\rm index}(\nabla^2 f(t))=N\}}\mathbbm{1}_{\{f(t)\leq u\}} | \nabla f(t)=0, f(t_0)=x\} \to 0.
\end{split}
\end{equation*}
Plugging these facts into (\ref{Eq:conditional expectation of mu}) and applying the dominated convergence theorem, we obtain that as $\ep\to 0$,
\begin{equation*}
\begin{split}
&\frac{1}{\ep^N}\E \{\mu_N^{u-}(t_0, \ep)\mathbbm{1}_{\{f(t_0)>u\}} \}\\
&\quad \le \frac{C}{\ep^N} \int_u^\infty \sup_{t\in U_{t_0}(\ep)}\E\{|{\rm det} \nabla^2 f(t)|\mathbbm{1}_{\{{\rm index}(\nabla^2 f(t))=N\}}\mathbbm{1}_{\{f(t)\leq u\}} | \nabla f(t)=0, f(t_0)=x\}\\
&\qquad \qquad \qquad \quad  \times p_{f(t_0)}(x) dx\int_{U_{t_0}(\ep)} dt\\
&\quad \to 0,
\end{split}
\end{equation*}
which implies $\E \{\mu_N^{u-}(t_0, \ep)\mathbbm{1}_{\{f(t_0)>u\}} \} = o(\ep^N)$. By (\ref{Eq:the first small prob}),
\begin{equation*}
\P\{f(t_0)>u, \mu_N(t_0, \ep)\geq 1, \mu_N^u(t_0, \ep) = 0\} = o(\ep^N).
\end{equation*}
Similar arguments yield
\begin{equation*}
\P\{f(t_0)\leq u, \mu_N^u(t_0, \ep) \geq 1 \} = o(\ep^N).
\end{equation*}
Hence (\ref{Eq:difference in numerator}) holds and therefore,
\begin{equation}\label{Eq:limiting form of Palm distr}
\begin{split}
F_{t_0}(u) &=\lim_{\ep\to 0} \frac{ \P\{f(t_0)>u, \mu_N(t_0, \ep)\geq 1\}}{\P\{ \mu_N(t_0, \ep)\geq 1\}} =\lim_{\ep\to 0} \frac{ \P\{\mu_N^u(t_0, \ep) \geq 1 \} + o(\ep^N)}{\P\{ \mu_N(t_0, \ep)\geq 1\}}\\
&=\lim_{\ep\to 0} \frac{ \E\{\mu_N^u(t_0, \ep)  \} + o(\ep^N)}{\E\{ \mu_N(t_0, \ep)\} + o(\ep^N)},
\end{split}
\end{equation}
where the last equality is due to (\ref{Eq:estimate prob of local max}) and (\ref{Eq:estimate prob of local max above u}).
By the Kac-Rice metatheorem and Lebesgue's continuity theorem,
\begin{equation*}
\begin{split}
&\lim_{\ep\to 0}\frac{1}{\ep^N}\E\{\mu_N^u(t_0, \ep)\} \\
&\quad =\lim_{\ep\to 0}
\frac{1}{\ep^N}\int_{U_{t_0}(\ep)} \E\{|{\rm det} \nabla^2 f(t)|\mathbbm{1}_{\{f(t)> u\}} \mathbbm{1}_{\{{\rm index}(\nabla^2 f(t))=N\}}|\nabla f(t)=0\}p_{\nabla f(t)}(0)dt\\
&\quad = \E\{|{\rm det} \nabla^2 f(t_0)|\mathbbm{1}_{\{f(t_0)> u\}} \mathbbm{1}_{\{{\rm index}(\nabla^2 f(t_0))=N\}}|\nabla f(t_0)=0\}p_{\nabla f(t_0)}(0),
\end{split}
\end{equation*}
and similarly,
\begin{equation*}
\begin{split}
\lim_{\ep\to 0}\frac{1}{\ep^N}\E\{\mu_N(t_0, \ep)\} =\E\{|{\rm det} \nabla^2 f(t_0)|\mathbbm{1}_{\{{\rm index}(\nabla^2 f(t_0))=N\}} | \nabla f(t_0)=0\}p_{\nabla f(t_0)}(0).
\end{split}
\end{equation*}
Plugging these into (\ref{Eq:limiting form of Palm distr}) yields (\ref{Eq:Palm distr Euclidean}).
\end{proof}

\begin{proof}{\bf of Theorem \ref{Thm:Palm distr high level}}\ By Theorem \ref{Thm:overshoot distr},
\begin{equation}\label{Eq:Palm distri u and v}
\begin{split}
\bar{F}_{t_0}(u,v)= \frac{\int_{u+v}^\infty \phi(x)\E\{|{\rm det} \nabla^2 f(t_0)|\mathbbm{1}_{\{{\rm index}(\nabla^2 f(t_0))=N\}}| f(t_0)=x, \nabla f(t_0)=0\} dx}{\int_v^\infty \phi(x)\E\{|{\rm det} \nabla^2 f(t_0)|\mathbbm{1}_{\{{\rm index}(\nabla^2 f(t_0))=N\}}| f(t_0)=x, \nabla f(t_0)=0\} dx}.
\end{split}
\end{equation}
We shall estimate the conditional expectations above. Note that $f$ has unit-variance, taking derivatives gives
\begin{equation*}
\E\{ f(t_0)\nabla^2 f(t_0) \} = -{\rm Cov}(\nabla f(t_0)) = -\La(t_0).
\end{equation*}
Since $\La(t_0)$ is positive definite, there exists a unique positive definite matrix $Q_{t_0}$ such that $Q_{t_0}\La(t_0)Q_{t_0}= I_N$ ($Q_{t_0}$ is also called the square root of $\La(t_0)$), where $I_N$ is the $N\times N$ unit matrix. Hence
\begin{equation*}
\E\{f(t_0)(Q_{t_0} \nabla^2 f(t_0) Q_{t_0})\} = -Q_{t_0}\La(t_0)Q_{t_0} = -I_N.
\end{equation*}
By the conditional formula for Gaussian random variables,
\begin{equation*}
\E\{Q_{t_0} \nabla^2 f(t_0) Q_{t_0} | f(t_0)=x, \nabla f(t_0)=0 \} = -xI_N.
\end{equation*}
Make change of variable
$$W(t_0) = Q_{t_0}\nabla^2 f(t_0)Q_{t_0} + xI_N,$$
where $W(t_0) = (W_{ij}(t_0))_{1\leq i, j\leq N}$. Then $(W(t_0)|f(t_0)=x, \nabla f(t_0)=0)$ is a Gaussian matrix
whose mean is 0 and covariance is the same as that of $(Q_{t_0} \nabla^2 f(t_0) Q_{t_0} | f(t_0)=x, \nabla f(t_0)=0)$.
Denote the density of Gaussian vector $((W_{ij}(t_0))_{1\leq i\leq j\leq N}|f(t_0)=x, \nabla f(t_0)=0)$ by $h_{t_0}(w)$,
$w=(w_{ij})_{1\leq i\leq j\leq N}\in \R^{N(N+1)/2}$, then
\begin{equation}\label{Eq:expectation of det}
\begin{split}
\E & \{\text{det} (Q_{t_0}\nabla^2 f(t_0)Q_{t_0}) \mathbbm{1}_{\{{\rm index}(\nabla^2 f(t_0))= N\}} | f(t_0)=x, \nabla f(t_0)=0 \}\\
& = \E \{\text{det} (Q_{t_0}\nabla^2 f(t_0)Q_{t_0}) \mathbbm{1}_{\{{\rm index}(Q_{t_0}\nabla^2 f(t_0)Q_{t_0})= N\}} | f(t_0)=x, \nabla f(t_0)=0 \} \\
& = \int_{w:\, {\rm index}((w_{ij})-xI_N) =N} \text{det} \Big((w_{ij})-xI_N \Big) h_{t_0}(w) \, dw,
\end{split}
\end{equation}
where $(w_{ij})$ is the abbreviation of matrix $(w_{ij})_{1\leq i, j\leq k}$.
Note that there exists a constant $c>0$ such that
$${\rm index}((w_{ij})-xI_N) =N, \quad \forall  \|(w_{ij})\|:= \bigg(\sum_{i,j=1}^N w_{ij}^2\bigg)^{1/2} <\frac{x}{c}.$$
Thus we can write (\ref{Eq:expectation of det}) as
\begin{equation}\label{Eq:expectation of det 2}
\begin{split}
&\int_{\R^{N(N+1)/2}} \text{det} \Big((w_{ij})-xI_N \Big) h_{t_0}(w)  dw - \int_{w:\, {\rm index}((w_{ij})-xI_N) <N} \text{det} \Big((w_{ij})-xI_N \Big) h_{t_0}(w) \, dw\\
& \quad = \E\{\text{det} (Q_{t_0}\nabla^2 f(t_0)Q_{t_0}) | f(t_0)=x, \nabla f(t_0)=0 \} + Z(t,x),
\end{split}
\end{equation}
where $Z(t,x)$ is the second integral in the first line of (\ref{Eq:expectation of det 2}) and it satisfies
\begin{equation}\label{Eq:estimate Z(t,x)}
|Z(t,x)| \leq \int_{\|(w_{ij})\|\geq\frac{x}{c}} \bigg|\text{det} \Big((w_{ij})-xI_N \Big)\bigg| h_{t_0}(w)  dw.
\end{equation}
By the non-degenerate condition ({\bf C}2), there exists a constant $\alpha'>0$ such that as $\|(w_{ij})\| \to \infty$,  $h_{t_0}(w) = o(e^{-\alpha' \|(w_{ij})\|^2})$. On the other hand, the determinant inside the integral in (\ref{Eq:estimate Z(t,x)}) is a polynomial in $w_{ij}$ and $x$, and it does not affect the exponentially decay, hence as $x\to \infty$, $|Z(t,x)| = o(e^{-\alpha x^2})$ for some constant $\alpha>0$. Combine this with (\ref{Eq:expectation of det}) and (\ref{Eq:expectation of det 2}), and note that
$$
{\rm det} \nabla^2 f(t_0) = {\rm det} (Q_{t_0}^{-1}Q_{t_0}\nabla^2 f(t_0)Q_{t_0}Q_{t_0}^{-1})={\rm det}(\La(t_0)){\rm det} (Q_{t_0}\nabla^2 f(t_0)Q_{t_0}),
$$
we obtain that, as $x\to \infty$,
\begin{equation*}
\begin{split}
\E&\{|{\rm det} \nabla^2 f(t_0)|\mathbbm{1}_{\{{\rm index}(\nabla^2 f(t_0))=N\}}| f(t_0)=x, \nabla f(t_0)=0\}\\
& = (-1)^N{\rm det}(\La(t_0))\E\{{\rm det} (Q_{t_0}\nabla^2 f(t_0)Q_{t_0})\mathbbm{1}_{\{{\rm index}(\nabla^2 f(t_0))=N\}}| f(t_0)=x, \nabla f(t_0)=0\}\\
& = (-1)^N{\rm det}(\La(t_0))\E\{{\rm det} (Q_{t_0}\nabla^2 f(t_0)Q_{t_0})| f(t_0)=x, \nabla f(t_0)=0\} + o(e^{-\alpha x^2})\\
& = (-1)^N\E\{{\rm det} \nabla^2 f(t_0)| f(t_0)=x, \nabla f(t_0)=0\} + o(e^{-\alpha x^2})
\end{split}
\end{equation*}
Plugging this into (\ref{Eq:Palm distri u and v}) yields (\ref{Eq:Palm distr high level}).
\end{proof}

\begin{lemma}\label{Lem:conditional expectation for N} Under the assumptions in Theorem \ref{Thm:Palm distr high level}, as $x\to \infty$,
\begin{equation}\label{Eq:det hessian conditional}
\begin{split}
\E\{{\rm det} \nabla^2 f(t) | f(t)=x, \nabla f(t)=0 \}=(-1)^N {\rm det}(\La(t)) x^N (1+O(x^{-2})).
\end{split}
\end{equation}
\end{lemma}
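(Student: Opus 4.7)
The plan is to build directly on the conditioning setup already introduced in the proof of Theorem~\ref{Thm:Palm distr high level}. Recall that with $Q_{t_0}$ denoting the symmetric square root of $\Lambda(t_0)^{-1}$, i.e.\ the unique positive definite matrix satisfying $Q_{t_0}\Lambda(t_0)Q_{t_0}=I_N$, the matrix $W(t_0):=Q_{t_0}\nabla^2 f(t_0)Q_{t_0}+xI_N$ becomes, when conditioned on $f(t_0)=x$ and $\nabla f(t_0)=0$, a centered Gaussian symmetric matrix whose conditional covariance structure is independent of $x$ (it depends only on the covariance of $\nabla^2 f(t_0)$, $f(t_0)$ and $\nabla f(t_0)$). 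Since $\det\nabla^2 f(t_0)=\det(\Lambda(t_0))\det(Q_{t_0}\nabla^2 f(t_0)Q_{t_0})$, the task reduces to estimating $\E\{\det(Q_{t_0}\nabla^2 f(t_0)Q_{t_0})\mid f(t_0)=x,\nabla f(t_0)=0\}$.

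Next I would write $Q_{t_0}\nabla^2 f(t_0)Q_{t_0}=W(t_0)-xI_N$ and factor
\[
\det\bigl(W(t_0)-xI_N\bigr)=(-x)^N\det\bigl(I_N-x^{-1}W(t_0)\bigr).
\]
Expanding the determinant in terms of elementary symmetric functions (equivalently, in terms of traces of powers of $W(t_0)$) gives
\[
\det\bigl(I_N-x^{-1}W(t_0)\bigr)=\sum_{k=0}^{N}(-1)^k x^{-k}e_k(W(t_0)),
\]
where $e_k$ is the $k$-th elementary symmetric polynomial in the eigenvalues of $W(t_0)$, a polynomial of degree $k$ in the entries of $W(t_0)$.

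Now I would take conditional expectations term by term. The $k=0$ term contributes $1$. The $k=1$ term is $-x^{-1}\operatorname{tr}W(t_0)$, whose conditional expectation vanishes because $\E\{W(t_0)\mid f(t_0)=x,\nabla f(t_0)=0\}=0$ by construction. For $k\ge 2$, the quantity $\E\{e_k(W(t_0))\mid f(t_0)=x,\nabla f(t_0)=0\}$ is a polynomial in the (conditional) mixed moments of the Gaussian entries of $W(t_0)$, which are bounded constants not depending on $x$; hence each such term contributes $O(x^{-k})$. Summing gives
\[
\E\bigl\{\det\bigl(I_N-x^{-1}W(t_0)\bigr)\bigm| f(t_0)=x,\nabla f(t_0)=0\bigr\}=1+O(x^{-2})
\]
as $x\to\infty$. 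Multiplying by $(-x)^N$ and by $\det(\Lambda(t_0))$ then yields \eqref{Eq:det hessian conditional}.

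The only mildly delicate step is justifying that the linear-in-$x^{-1}$ correction vanishes and the remainder is genuinely $O(x^{-2})$ rather than $O(x^{-1})$: this hinges on the fact that $W(t_0)$ is centered conditionally, which was precisely the purpose of the shift by $xI_N$. Everything else is a straightforward polynomial expansion, and no further probabilistic input is needed beyond what was already established in the proof of Theorem~\ref{Thm:Palm distr high level}.
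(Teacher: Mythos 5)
Your proposal is correct and follows essentially the same route as the paper: both reduce to the centered matrix $Q_{t_0}\nabla^2 f(t_0)Q_{t_0}+xI_N$ whose conditional law does not depend on $x$, expand the determinant in powers of $x$ (your elementary symmetric polynomials $e_k$ are exactly the paper's sums $S_k$ of principal minors), and observe that the $x^{N-1}$ coefficient vanishes because the conditional mean is zero. No gaps.
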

\begin{proof}\ Let $Q_t$ be the $N\times N$ positive definite matrix such that $Q_t\La(t)Q_t = I_N$. Then we can write $\nabla^2 f(t)= Q^{-1}_tQ_t\nabla^2 f(t)Q_tQ^{-1}_t$ and therefore,
\begin{equation}\label{Eq:det hessian conditional 1}
\begin{split}
\E&\{{\rm det} \nabla^2 f(t) | f(t)=x, \nabla f(t)=0 \}\\
&={\rm det}(\La(t)) \E\{{\rm det} (Q_t\nabla^2 f(t)Q_t) | f(t)=x, \nabla f(t)=0 \}.
\end{split}
\end{equation}
Since $f(t)$ and $\nabla f(t)$ are independent,
$$
\E\{Q_t \nabla^2 f(t) Q_t | f(t)=x, \nabla f(t)=0 \} = -xI_N.
$$
It follows that
\begin{equation}\label{Eq:det hessian conditional 2}
\begin{split}
\E\{Q_t \nabla^2 f(t) Q_t | f(t)=x, \nabla f(t)=0 \}= \E\{{\rm det} (\wt{\Delta}(t) - xI_N) \},
\end{split}
\end{equation}
where $\wt{\Delta}(t)=(\wt{\Delta}_{ij}(t))_{1\le i,j\le N}$ is an $N\times N$ Gaussian random matrix such that $\E\{\wt{\Delta}(t)\} =0$ and its covariance matrix is independent of $x$. By the Laplace expansion of the determinant,
\begin{equation*}
\begin{split}
{\rm det} (\wt{\Delta}(t) - xI_N) = (-1)^N[x^N - S_1(\wt{\Delta}(t))x^{N-1} +  S_2(\wt{\Delta}(t))x^{N-2} + \cdots + (-1)^N S_N(\wt{\Delta}(t))],
\end{split}
\end{equation*}
where $S_i(\wt{\Delta}(t))$ is the sum of the $\binom {N}{i}$ principle minors of order $i$ in $\wt{\Delta}(t)$. Taking the expectation above and noting that $\E\{S_1(\wt{\Delta}(t))\}=0$ since $\E\{\wt{\Delta}(t)\} =0$, we obtain that as $x\to \infty$,
\begin{equation*}
\begin{split}
\E\{{\rm det} (\wt{\Delta}(t) - xI_N) \} = (-1)^Nx^N (1+O(x^{-2})).
\end{split}
\end{equation*}
Combining this with (\ref{Eq:det hessian conditional 1}) and (\ref{Eq:det hessian conditional 2}) yields (\ref{Eq:det hessian conditional}).
\end{proof}

\begin{lemma}\label{Lem:cov of isotropic Euclidean}{\rm [Aza\"is and Wschebor (2008), Lemma 2].} Let $\{f(t): t\in T\}$ be a centered, unit-variance, isotropic Gaussian random field satisfying $({\bf C}1)$ and $({\bf C}2)$. Then for each $t\in T$ and $i$, $j$, $k$, $l\in\{1,\ldots, N\}$,
\begin{equation}\label{Eq:cov of derivatives}
\begin{split}
&\E\{f_i(t)f(t)\}=\E\{f_i(t)f_{jk}(t)\}=0, \quad \E\{f_i(t)f_j(t)\}=-\E\{f_{ij}(t)f(t)\}=-2\rho'\delta_{ij},\\
&\E\{f_{ij}(t)f_{kl}(t)\}=4\rho''(\delta_{ij}\delta_{kl} + \delta_{ik}\delta_{jl} + \delta_{il}\delta_{jk}),
\end{split}
\end{equation}
where $\delta_{ij}$ is the Kronecker delta and $\rho'$ and $\rho''$ are defined in \eqref{Eq:kappa}.
\end{lemma}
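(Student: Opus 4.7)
The plan is to derive all identities by differentiating the covariance function $r(t,s) := \E\{f(t)f(s)\} = \rho(\|t-s\|^2)$ with respect to the coordinates of $t$ and $s$ and then setting $s=t$. Since $f \in C^2$ in mean square, differentiation commutes with expectation, so
\begin{equation*}
\E\{f_i(t)f(s)\} = \partial_{t_i} r(t,s), \qquad \E\{f_i(t)f_j(s)\} = \partial_{t_i}\partial_{s_j} r(t,s),
\end{equation*}
and analogously for the third- and fourth-order mixed derivatives needed for $\E\{f_i(t)f_{jk}(t)\}$ and $\E\{f_{ij}(t)f_{kl}(t)\}$.

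First I would compute $\partial_{t_i} r(t,s) = 2(t_i-s_i)\rho'(\|t-s\|^2)$, which vanishes at $s=t$, giving $\E\{f_i(t)f(t)\}=0$. Next I would take one more derivative: $\partial_{s_j}\partial_{t_i} r(t,s) = -2\delta_{ij}\rho'(\|t-s\|^2) - 4(t_i-s_i)(t_j-s_j)\rho''(\|t-s\|^2)$, and $\partial_{t_i}\partial_{t_j} r(t,s) = 2\delta_{ij}\rho'(\|t-s\|^2) + 4(t_i-s_i)(t_j-s_j)\rho''(\|t-s\|^2)$; evaluating at $s=t$ immediately yields $\E\{f_i(t)f_j(t)\} = -2\rho'\delta_{ij}$ and $\E\{f_{ij}(t)f(t)\} = 2\rho'\delta_{ij}$. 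For the third-order identity $\E\{f_i(t)f_{jk}(t)\}=0$, one more derivative with respect to $s_k$ is applied; every resulting term carries at least one explicit factor of $(t_m-s_m)$ and thus vanishes on the diagonal $s=t$.

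The main bookkeeping step is the fourth-order computation. I would apply $\partial_{t_i}\partial_{t_j}\partial_{s_k}\partial_{s_l}$ to $r(t,s)$ and keep only those terms in which \emph{every} explicit coordinate-difference factor has been eliminated by one of the four derivatives — terms with residual factors of the form $(t_m-s_m)$ die at $s=t$, and terms involving $\rho'''$ appear only with at least two such residual factors, so they also vanish. Three surviving contributions arise, one for each way of pairing the differentiation indices $\{i,j\}$ with $\{k,l\}$ through Kronecker deltas, producing the symmetric combination $4\rho''(\delta_{ij}\delta_{kl}+\delta_{ik}\delta_{jl}+\delta_{il}\delta_{jk})$. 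The only potential obstacle is justifying the interchange of expectation and the fourth-order differentiation of $r$ at the origin; this follows from the assumption $f \in C^2$ together with the existence of $\rho''(0)$, which by a standard argument guarantees enough regularity of $r$ near the diagonal to legitimise the computation.
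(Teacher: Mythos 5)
The paper does not prove this lemma; it is cited verbatim from Aza\"is and Wschebor (2008), Lemma 2, so there is no in-paper proof to compare against. Your direct derivation — differentiating $r(t,s)=\rho(\|t-s\|^2)$ and evaluating on the diagonal, observing that residual $(t_m-s_m)$ factors and all $\rho'''$ terms vanish at $s=t$, leaving exactly the three Kronecker-delta pairings with coefficient $4\rho''$ — is correct and is the standard argument; the computed values are consistent with the paper's stated consequences (${\rm Var}(f_i)=-2\rho'$, ${\rm Var}(f_{ii})=12\rho''$). The only point worth tightening is the regularity justification at the end: what is really needed is that the covariance $r$ admit the mixed partials $\partial_t^\alpha\partial_s^\beta r$ up to $|\alpha|,|\beta|\le 2$ near the diagonal, which follows from $({\bf C}1)$ (mean-square $C^2$ with H\"older second derivatives); one then differentiates $r$ directly and only invokes the isotropic form $\rho(\|t-s\|^2)$ when expressing the resulting diagonal values in terms of $\rho'(0)$ and $\rho''(0)$, avoiding any appeal to $\rho'''$ off the diagonal.
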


\begin{lemma}\label{Lem:GOE for det Hessian} Under the assumptions in Lemma \ref{Lem:cov of isotropic Euclidean}, the distribution of $\nabla^2f(t)$ is the same as that of $\sqrt{8\rho''}M_N + 2\sqrt{\rho''}\xi I_N$, where $M_N$ is a GOE random matrix, $\xi$ is a standard Gaussian variable independent of $M_N$ and $I_N$ is the $N\times N$ identity matrix. Assume further that $({\bf C}3)$ holds, then the conditional distribution of $(\nabla^2f(t)|f(t)=x)$ is the same as the distribution of $\sqrt{8\rho''}M_N + [2\rho'x +2\sqrt{\rho''-\rho'^2}\xi ]I_N$.
\end{lemma}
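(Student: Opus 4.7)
The plan is to verify both distributional identities by matching first and second moments, since all the random matrices in question are jointly Gaussian, so agreement of mean and covariance suffices.

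For the unconditional statement, I would first observe that by Lemma \ref{Lem:cov of isotropic Euclidean} the entries $f_{ij}(t)$ are centered Gaussians with
\[
\E\{f_{ij}(t)f_{kl}(t)\}=4\rho''(\delta_{ij}\delta_{kl}+\delta_{ik}\delta_{jl}+\delta_{il}\delta_{jk}).
\]
Then I would compute the covariance of the entries of the proposed matrix $\sqrt{8\rho''}M_N+2\sqrt{\rho''}\xi I_N$ using the GOE specification ($\mathrm{Var}(M_{ii})=1$, $\mathrm{Var}(M_{ij})=1/2$ for $i<j$, entries independent) plus independence of $\xi$ from $M_N$: for $i=j$ the diagonal variance is $8\rho''+4\rho''=12\rho''$, for $i\ne j$ off-diagonal variance is $8\rho''\cdot(1/2)=4\rho''$, the $(i,i)$--$(j,j)$ covariance for $i\ne j$ is $4\rho''$ (coming entirely from the $\xi I_N$ term), and all remaining cross-covariances vanish. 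These values match the three cases of the tensor $4\rho''(\delta_{ij}\delta_{kl}+\delta_{ik}\delta_{jl}+\delta_{il}\delta_{jk})$ exactly, and both sides have mean zero, so the two Gaussian matrices are equal in distribution.

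For the conditional statement, I would apply the standard Gaussian conditioning formula to $(f_{ij}(t)\mid f(t)=x)$. Using $\E\{f_{ij}(t)f(t)\}=2\rho'\delta_{ij}$ and $\mathrm{Var}(f(t))=1$, the conditional mean is $2\rho' x\,\delta_{ij}$, and the conditional covariance reduces to
\[
4(\rho''-\rho'^2)\delta_{ij}\delta_{kl}+4\rho''(\delta_{ik}\delta_{jl}+\delta_{il}\delta_{jk}).
\]
Condition $({\bf C}3)$ guarantees $\rho''-\rho'^2\ge 0$, so the constant $2\sqrt{\rho''-\rho'^2}$ is real; this is the only place where $({\bf C}3)$ enters. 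The proposed matrix $\sqrt{8\rho''}M_N+[2\rho' x+2\sqrt{\rho''-\rho'^2}\,\xi]I_N$ has mean $2\rho' x\,\delta_{ij}$ in the $(i,j)$ entry, and a direct computation shows its entry covariance matches the formula above: the $\sqrt{8\rho''}M_N$ term contributes $4\rho''(\delta_{ik}\delta_{jl}+\delta_{il}\delta_{jk})$ (with the correct factor of two on the diagonal), while the $2\sqrt{\rho''-\rho'^2}\,\xi I_N$ term adds the isotropic piece $4(\rho''-\rho'^2)\delta_{ij}\delta_{kl}$. Matching means and covariances completes the proof.

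There is no serious obstacle; the work is entirely bookkeeping of covariances. The one item that requires a moment's thought is making sure every case (two on-diagonal entries, two off-diagonal entries, and mixed on/off diagonal) of the covariance tensor is correctly reproduced on each side, and noting that $({\bf C}3)$ is precisely the condition required to represent the conditional variance as the variance of a genuine Gaussian.
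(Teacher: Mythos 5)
Your proposal is correct and follows essentially the same route as the paper: apply the covariance identities of Lemma \ref{Lem:cov of isotropic Euclidean} together with the Gaussian conditioning formula, and then match means and covariances against the GOE-plus-scalar-multiple-of-identity representation. Your version simply spells out the entry-by-entry covariance bookkeeping that the paper leaves implicit, and correctly identifies $({\bf C}3)$ as the condition making $\sqrt{\rho''-\rho'^2}$ real.
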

\begin{proof} \ The first result is a direct consequence of Lemma \ref{Lem:cov of isotropic Euclidean}. For the second one, applying (\ref{Eq:cov of derivatives}) and the well-known conditional formula for Gaussian variables, we see that $(\nabla^2f(t)|f(t)=x)$ can be written as $\Delta + 2\rho'xI_N$, where $\Delta=(\Delta_{ij})_{1\leq i,j\leq N}$ is a symmetric $N\times N$ matrix with centered Gaussian entries such that
\begin{equation*}
\E\{\Delta_{ij}\Delta_{kl}\}=4\rho''(\delta_{ik}\delta_{jl} + \delta_{il}\delta_{jk}) + 4(\rho''-\rho'^2)\delta_{ij}\delta_{kl}.
\end{equation*}
Therefore, $\Delta$ has the same distribution as the random matrix $\sqrt{8\rho''}M_N + 2\sqrt{\rho''-\rho'^2}\xi I_N$, completing the proof.
\end{proof}

Lemma \ref{Lem:GOE computation} below is a revised version of Lemma 3.2.3 in Auffinger (2011). The proof is omitted here since it is similar to that of the reference above.
\begin{lemma}\label{Lem:GOE computation} Let $M_N$ be an $N\times N$ GOE matrix and $X$ be an independent Gaussian random variable with mean $m$ and variance $\sigma^2$. Then,
\begin{equation}\label{Eq:computing the det with index}
\begin{split}
&\E\big\{|{\rm det}(M_N-XI_N)|\mathbbm{1}_{\{{\rm index}(M_N-XI_N)=N\}}\big\}\\
&\quad =\frac{\Gamma\big(\frac{N+1}{2}\big)}{\sqrt{\pi}\sigma}\E_{GOE}^{N+1}\bigg\{ \exp\bigg[\frac{\la_{N+1}^2}{2} - \frac{(\la_{N+1}-m )^2}{2\sigma^2} \bigg] \bigg\},
\end{split}
\end{equation}
where $\E_{GOE}^{N+1}$ is the expectation under the probability distribution $Q_{N+1}(d\la)$ as in (\ref{Eq:GOE density}) with $N$ replaced by $N+1$.
\end{lemma}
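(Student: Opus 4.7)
The plan is to reduce the left-hand side to an integral against the joint eigenvalue density of a GOE matrix of size $N+1$, by absorbing the auxiliary variable $X$ as the top eigenvalue $\la_{N+1}$. The main tool is the Vandermonde identity for the eigenvalues after adjoining a new point, which turns $|\det(M_N - XI_N)|\mathbbm{1}_{\{{\rm index}(M_N-XI_N)=N\}}$ into a factor that fits neatly into $Q_{N+1}$.

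First, diagonalize $M_N$ and denote its ordered eigenvalues by $\la_1\le\cdots\le\la_N$ with joint law $Q_N(d\la)$ from \eqref{Eq:GOE density}. The event $\{{\rm index}(M_N-XI_N)=N\}$ is exactly $\{X>\la_N\}$, and on this event $|\det(M_N-XI_N)|=\prod_{i=1}^N(X-\la_i)$. Writing the Gaussian density of $X$ as $\phi_{m,\sigma}(x)=(2\pi)^{-1/2}\sigma^{-1}\exp(-(x-m)^2/(2\sigma^2))$ and using independence of $X$ and $M_N$, the left-hand side becomes
\begin{equation*}
\frac{1}{c_N\sqrt{2\pi}\,\sigma}\int\!\!\int_{\la_1\le\cdots\le\la_N< x}\prod_{i=1}^N(x-\la_i)\prod_{1\le i<j\le N}|\la_i-\la_j|\prod_{i=1}^N e^{-\la_i^2/2}\,e^{-(x-m)^2/(2\sigma^2)}\,d\la\,dx.
\end{equation*}

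Now relabel $x=\la_{N+1}$. On the region $\la_N<\la_{N+1}$ the crucial identity
\begin{equation*}
\prod_{i=1}^N(\la_{N+1}-\la_i)\prod_{1\le i<j\le N}|\la_i-\la_j|\;=\;\prod_{1\le i<j\le N+1}|\la_i-\la_j|
\end{equation*}
holds, so the integrand is exactly the GOE weight of dimension $N+1$ multiplied by an extra Gaussian factor. Inserting $e^{-\la_{N+1}^2/2}\cdot e^{\la_{N+1}^2/2}$ to complete the product $\prod_{i=1}^{N+1}e^{-\la_i^2/2}$, we recognize the remaining density as $c_{N+1}Q_{N+1}(d\la)$ times the extra factor $\exp[\la_{N+1}^2/2-(\la_{N+1}-m)^2/(2\sigma^2)]$. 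Therefore the quantity equals
\begin{equation*}
\frac{c_{N+1}}{c_N\sqrt{2\pi}\,\sigma}\,\E_{GOE}^{N+1}\!\left\{\exp\!\left[\tfrac{\la_{N+1}^2}{2}-\tfrac{(\la_{N+1}-m)^2}{2\sigma^2}\right]\right\}.
\end{equation*}

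It remains to show $c_{N+1}/c_N=\sqrt{2}\,\Gamma((N+1)/2)$, which yields the claimed prefactor $\Gamma((N+1)/2)/(\sqrt{\pi}\,\sigma)$. From \eqref{Eq:normalization constant},
\begin{equation*}
\frac{c_{N+1}}{c_N}=\frac{2\sqrt{2}}{N+1}\,\Gamma\!\left(1+\tfrac{N+1}{2}\right)=\frac{2\sqrt{2}}{N+1}\cdot\frac{N+1}{2}\Gamma\!\left(\tfrac{N+1}{2}\right)=\sqrt{2}\,\Gamma\!\left(\tfrac{N+1}{2}\right),
\end{equation*}
giving the stated constant. The only subtle point is the Vandermonde step, which requires that $X>\la_N$ almost surely on the relevant event so that the absolute values on the right of that identity are correct; since $X$ is continuous this holds trivially. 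The rest is bookkeeping with the normalization constants.
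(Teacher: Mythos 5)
Your proof is correct and is essentially the argument the paper has in mind: the paper omits the proof, deferring to Lemma 3.2.3 of Auffinger (2011), which uses exactly this device of absorbing the independent Gaussian shift $X$ as an $(N+1)$-st ordered eigenvalue via the Vandermonde identity and then tracking the ratio $c_{N+1}/c_N$ of Selberg normalization constants. All steps, including the identification of the index event with $\{X>\la_N\}$ and the computation $c_{N+1}/c_N=\sqrt{2}\,\Gamma\big(\tfrac{N+1}{2}\big)$, check out.
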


\begin{lemma}\label{Lem:expectation of local max} Let $\{f(t): t\in T\}$ be a centered, unit-variance, isotropic Gaussian random field satisfying $({\bf C}1)$ and $({\bf C}2)$. Then for each $t\in T$,
\begin{equation*}
\begin{split}
\E&\{|{\rm det} \nabla^2 f(t)|\mathbbm{1}_{\{{\rm index}(\nabla^2 f(t))=N\}}| \nabla f(t)=0\}\\
&=\Big(\frac{2}{\pi}\Big)^{1/2}\Gamma\Big(\frac{N+1}{2}\Big)(8\rho'')^{N/2}\E_{GOE}^{N+1}\bigg\{ \exp\bigg[-\frac{\la_{N+1}^2}{2} \bigg] \bigg\}.
\end{split}
\end{equation*}
\end{lemma}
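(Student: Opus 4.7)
The plan rests on two facts from the excerpt: Lemma \ref{Lem:cov of isotropic Euclidean} shows that $\E\{f_i(t) f_{jk}(t)\}=0$, so for a (jointly Gaussian) isotropic field the Hessian $\nabla^2 f(t)$ is independent of the gradient $\nabla f(t)$; consequently, the conditioning on $\nabla f(t)=0$ is vacuous and
\[
\E\{|\det \nabla^2 f(t)|\mathbbm{1}_{\{{\rm index}(\nabla^2 f(t))=N\}}\mid \nabla f(t)=0\}
= \E\{|\det \nabla^2 f(t)|\mathbbm{1}_{\{{\rm index}(\nabla^2 f(t))=N\}}\}.
\]
This reduction is the first step and is essentially free.

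Next, I invoke Lemma \ref{Lem:GOE for det Hessian}, which identifies the distribution of $\nabla^2 f(t)$ with that of $\sqrt{8\rho''}\, M_N + 2\sqrt{\rho''}\,\xi I_N$, where $M_N$ is GOE and $\xi$ is an independent standard Gaussian. Pulling the scalar $\sqrt{8\rho''}$ out gives
\[
\nabla^2 f(t) \stackrel{d}{=} \sqrt{8\rho''}\bigl(M_N - X I_N\bigr), \qquad X := -\xi/\sqrt{2},
\]
since $2\sqrt{\rho''}/\sqrt{8\rho''}=1/\sqrt{2}$. Because $\sqrt{8\rho''}>0$, the event $\{{\rm index}(\nabla^2 f(t))=N\}$ is identical to $\{{\rm index}(M_N - XI_N)=N\}$, and $|\det \nabla^2 f(t)| = (8\rho'')^{N/2}|\det(M_N-XI_N)|$. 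Thus the expectation equals
\[
(8\rho'')^{N/2}\,\E\bigl\{|\det(M_N-XI_N)|\,\mathbbm{1}_{\{{\rm index}(M_N-XI_N)=N\}}\bigr\}.
\]

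Finally I apply Lemma \ref{Lem:GOE computation} to $X = -\xi/\sqrt{2}$, which is Gaussian with mean $m=0$ and variance $\sigma^2=1/2$. Plugging $m=0$, $\sigma = 1/\sqrt{2}$ into formula \eqref{Eq:computing the det with index} yields
\[
\frac{\Gamma((N+1)/2)}{\sqrt{\pi}\,(1/\sqrt{2})}\,\E_{GOE}^{N+1}\Bigl\{\exp\bigl[\tfrac{1}{2}\la_{N+1}^2 - \la_{N+1}^2\bigr]\Bigr\}
= \sqrt{\tfrac{2}{\pi}}\,\Gamma\!\Bigl(\tfrac{N+1}{2}\Bigr)\,\E_{GOE}^{N+1}\!\bigl\{e^{-\la_{N+1}^2/2}\bigr\},
\]
and multiplying by the prefactor $(8\rho'')^{N/2}$ gives the claimed identity. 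The whole argument is a chain of reductions, so there is no genuine analytic obstacle; the only point requiring care is bookkeeping the constants in the substitution $m=0$, $\sigma^2=1/2$ so that the $\exp[\la_{N+1}^2/2]$ in Lemma \ref{Lem:GOE computation} combines correctly with $-(\la_{N+1})^2/(2\sigma^2)$ to produce $\exp[-\la_{N+1}^2/2]$.
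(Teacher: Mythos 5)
Your proof is correct and follows essentially the same route as the paper's: drop the conditioning by independence of $\nabla f(t)$ and $\nabla^2 f(t)$, rewrite the Hessian as $\sqrt{8\rho''}(M_N - XI_N)$ with $X$ centered Gaussian of variance $1/2$ via Lemma \ref{Lem:GOE for det Hessian}, and apply Lemma \ref{Lem:GOE computation} with $m=0$, $\sigma=1/\sqrt{2}$. The constants all check out.
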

\begin{proof}\ Since $\nabla^2 f(t)$ and $\nabla f(t)$ are independent for each fixed $t$, by Lemma \ref{Lem:GOE for det Hessian},
\begin{equation*}
\begin{split}
\E&\{|{\rm det} \nabla^2 f(t)|\mathbbm{1}_{\{{\rm index}(\nabla^2 f(t))=N\}}| \nabla f(t_0)=0\}\\
&= \E\{|{\rm det} \nabla^2 f(t)|\mathbbm{1}_{\{{\rm index}(\nabla^2 f(t))=N\}}\}\\
&= \E\{|{\rm det} (\sqrt{8\rho''}M_N + 2\sqrt{\rho''}\xi I_N)|\mathbbm{1}_{\{{\rm index}(\sqrt{8\rho''}M_N + 2\sqrt{\rho''}\xi I_N)=N\}}\}\\
&= (8\rho'')^{N/2} \E\{|{\rm det}(M_N-XI_N)|\mathbbm{1}_{\{{\rm index}(M_N-XI_N)=N\}}\},
\end{split}
\end{equation*}
where $X$ is an independent centered Gaussian variable with variance $1/2$. Applying Lemma \ref{Lem:GOE computation} with $m=0$ and $\sigma=1/\sqrt{2}$, we obtain the desired result.
\end{proof}

\begin{lemma}\label{Lem:expectation of local max above u} Let $\{f(t): t\in T\}$ be a centered, unit-variance, isotropic Gaussian random field satisfying $({\bf C}1)$, $({\bf C}2)$ and $({\bf C}3)$. Then for each $t\in T$ and $x\in \R$,
\begin{equation*}
\begin{split}
\E&\{|{\rm det} \nabla^2 f(t)| \mathbbm{1}_{\{{\rm index}(\nabla^2 f(t))=N\}}|f(t)=x, \nabla f(t)=0\}\\
&=\left\{
  \begin{array}{l l}
     \big(\frac{2}{\pi}\big)^{1/2} \Gamma\big(\frac{N+1}{2}\big)(8\rho'')^{N/2} \big(\frac{\rho''}{\rho''-\rho'^2}\big)^{1/2}\\
     \quad \times \E_{GOE}^{N+1}\bigg\{ \exp\bigg[\frac{\la_{N+1}^2}{2} - \frac{\rho''\big(\la_{N+1}+\frac{\rho'x}{\sqrt{2\rho''}} \big)^2}{\rho''-\rho'^2} \bigg]\bigg\} & \quad \text{if $\rho''-\rho'^2> 0$},\\
     (8\rho'')^{N/2}\E_{GOE}^{N}\big\{ \big(\prod_{i=1}^N|\la_i-\frac{x}{\sqrt{2}}|\big) \mathbbm{1}_{\{\la_N<\frac{x}{\sqrt{2}}\}} \big\} & \quad \text{if $\rho''-\rho'^2= 0$}.
   \end{array} \right.
\end{split}
\end{equation*}
\end{lemma}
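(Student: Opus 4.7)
The plan is to reduce the conditional expectation of $|\det\nabla^2 f(t)|$ to an expectation involving only a GOE matrix, and then invoke Lemma \ref{Lem:GOE computation} in the non-degenerate case while treating the degenerate case $\rho''-\rho'^2=0$ by hand. First I would observe that Lemma \ref{Lem:cov of isotropic Euclidean} gives $\E\{f_i(t)f_{jk}(t)\}=0$ and $\E\{f_i(t)f(t)\}=0$, so $\nabla f(t)$ is independent of the Gaussian pair $(f(t),\nabla^2 f(t))$. Consequently the conditioning on $\nabla f(t)=0$ is vacuous and the left-hand side equals
\begin{equation*}
\E\bigl\{|\det\nabla^2 f(t)|\,\mathbbm{1}_{\{{\rm index}(\nabla^2 f(t))=N\}}\bigm| f(t)=x\bigr\}.
\end{equation*}

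Second, I would apply Lemma \ref{Lem:GOE for det Hessian}, which says that $(\nabla^2 f(t)\mid f(t)=x)$ has the same law as $\sqrt{8\rho''}\,M_N + [2\rho' x+2\sqrt{\rho''-\rho'^2}\,\xi]I_N$ with $M_N$ a GOE matrix and $\xi$ an independent standard Gaussian. Writing this in the convenient form $\sqrt{8\rho''}(M_N - X I_N)$, where
\begin{equation*}
X \;=\; -\frac{\rho' x}{\sqrt{2\rho''}} \;-\; \sqrt{\frac{\rho''-\rho'^2}{2\rho''}}\,\xi,
\end{equation*}
we see that $X$ is Gaussian with mean $m=-\rho' x/\sqrt{2\rho''}$ and variance $\sigma^2=(\rho''-\rho'^2)/(2\rho'')$, independent of $M_N$. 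Since index and determinant are preserved by the scalar factor $\sqrt{8\rho''}$ (up to an overall $(8\rho'')^{N/2}$ on the determinant), the expectation becomes
\begin{equation*}
(8\rho'')^{N/2}\,\E\bigl\{|\det(M_N-X I_N)|\,\mathbbm{1}_{\{{\rm index}(M_N-X I_N)=N\}}\bigr\}.
\end{equation*}

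In the non-degenerate case $\rho''-\rho'^2>0$, I would directly invoke Lemma \ref{Lem:GOE computation} with the above $m$ and $\sigma$. Substituting $1/(\sqrt{\pi}\sigma)=\sqrt{2/\pi}\cdot\sqrt{\rho''/(\rho''-\rho'^2)}$, $1/(2\sigma^2)=\rho''/(\rho''-\rho'^2)$, and $(\la_{N+1}-m)^2=(\la_{N+1}+\rho' x/\sqrt{2\rho''})^2$ gives exactly the first branch of the claimed formula; this is essentially bookkeeping. For the degenerate case $\rho''-\rho'^2=0$ (equivalently $\kappa=1$, i.e.\ $\rho'=-\sqrt{\rho''}$), the variance $\sigma^2$ collapses, so $X$ is deterministic and equal to $-\rho' x/\sqrt{2\rho''}=x/\sqrt{2}$. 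Then $|\det(M_N-X I_N)|=\prod_{i=1}^N|\la_i-x/\sqrt{2}|$ with the $\la_i$ ordered eigenvalues of $M_N$, and the index condition becomes $\la_N<x/\sqrt{2}$. Taking expectation against $Q_N$ yields the second branch.

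The one step where some care is needed is the degenerate case, since Lemma \ref{Lem:GOE computation} is stated for $\sigma>0$ and does not directly apply. I would therefore either argue by continuity (as the accompanying remarks suggest, both sides are continuous in $\kappa$ on $(0,1]$), or, more cleanly, just compute the deterministic-shift case directly from the GOE density \eqref{Eq:GOE density}, which is a one-line substitution. The rest of the argument is a routine algebraic reorganization of the constants produced by Lemma \ref{Lem:GOE computation}.
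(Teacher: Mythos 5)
Your proposal is correct and follows essentially the same route as the paper: use independence of $\nabla f$ from $(f,\nabla^2 f)$ to drop the gradient conditioning, apply Lemma \ref{Lem:GOE for det Hessian} to recast $(\nabla^2 f\mid f=x)$ as $\sqrt{8\rho''}(M_N-XI_N)$ with $X$ Gaussian of the stated mean and variance, then invoke Lemma \ref{Lem:GOE computation} for $\rho''-\rho'^2>0$ and evaluate the deterministic-shift case directly for $\rho''-\rho'^2=0$. If anything, you are slightly more careful than the paper in flagging that Lemma \ref{Lem:GOE computation} requires $\sigma>0$ and so does not literally cover the degenerate branch.
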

\begin{proof}\ Since $\nabla f(t)$ is independent of both $f(t)$ and $\nabla^2 f(t)$ for each fixed $t$, by Lemma \ref{Lem:GOE for det Hessian},
\begin{equation}\label{Eq:Conditional expectation by GOE}
\begin{split}
\E&\{|{\rm det} \nabla^2 f(t)| \mathbbm{1}_{\{{\rm index}(\nabla^2 f(t))=N\}}|f(t)=x, \nabla f(t)=0\}\\
&=\E\{|{\rm det} \nabla^2 f(t)|\mathbbm{1}_{\{{\rm index}(\nabla^2 f(t))=N\}}| f(t)=x\}\\
&= \E\{|{\rm det} (\sqrt{8\rho''}M_N + [2\rho'x +2\sqrt{\rho''-\rho'^2}\xi ]I_N)|\\
&\qquad \times \mathbbm{1}_{\{{\rm index}(\sqrt{8\rho''}M_N + [2\rho'x +2\sqrt{\rho''-\rho'^2}\xi ]I_N)=N\}}\}.
\end{split}
\end{equation}
When $\rho''-\rho'^2> 0$, then (\ref{Eq:Conditional expectation by GOE}) can be written as
\begin{equation*}
\begin{split}
(8\rho'')^{N/2} \E\{|{\rm det}(M_N-XI_N)|\mathbbm{1}_{\{{\rm index}(M_N-XI_N)=N\}}\},
\end{split}
\end{equation*}
where $X$ is an independent Gaussian variable with mean $m=-\frac{\rho'x}{\sqrt{2\rho''}}$ and variance $\sigma^2= \frac{\rho''-\rho'^2}{2\rho''}$. Applying Lemma \ref{Lem:GOE computation} yields the formula for the case of $\rho''-\rho'^2> 0$.

When $\rho''-\rho'^2= 0$, i.e. $\rho'=-\sqrt{\rho''}$, then (\ref{Eq:Conditional expectation by GOE}) becomes
\begin{equation*}
\begin{split}
&(8\rho'')^{N/2} \E\{|{\rm det}(M_N-\frac{x}{\sqrt{2}}I_N)|\mathbbm{1}_{\{{\rm index}(M_N-\frac{x}{\sqrt{2}}I_N)=N\}}\}\\
&\quad =(8\rho'')^{N/2}\E_{GOE}^{N}\bigg\{ \bigg(\prod_{i=1}^N|\la_i-\frac{x}{\sqrt{2}}|\bigg) \mathbbm{1}_{\{\la_N<\frac{x}{\sqrt{2}}\}} \bigg\}.
\end{split}
\end{equation*}
We finish the proof.
\end{proof}

The following result can be derived from elementary calculations by applying the GOE density \eqref{Eq:GOE density}, the details are omitted here.
\begin{proposition}\label{Prop:GOE expectation for N=2} Let $N=2$. Then for positive constants $a$ and $b$,
\begin{equation*}
\begin{split}
&\E_{GOE}^{N+1}\bigg\{ \exp\Big[\frac{1}{2}\la_{N+1}^2 -a(\la_{N+1}-b)^2 \Big] \bigg\} \\
&\quad=\frac{1}{\sqrt{2}\pi}\bigg\{(\frac{1}{a} + 2b^2-1)\frac{\pi}{\sqrt{a}}\Phi\Big(\frac{b\sqrt{2a}}{\sqrt{a+1}} \Big) + \frac{b\sqrt{a+1}\sqrt{\pi}}{a}e^{-\frac{ab^2}{a+1}} \\
&\qquad + \frac{2\pi}{\sqrt{2a+1}}e^{-\frac{ab^2}{2a+1}}\Phi\Big(\frac{\sqrt{2}ab}{\sqrt{(2a+1)(a+1)}} \Big) \bigg\}.
\end{split}
\end{equation*}
\end{proposition}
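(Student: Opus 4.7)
The plan is to compute the expectation directly by unwinding the GOE density \eqref{Eq:GOE density} with $N+1=3$. From \eqref{Eq:normalization constant},
\[
c_3=\tfrac{1}{6}(2\sqrt 2)^3\,\Gamma(\tfrac32)\Gamma(2)\Gamma(\tfrac52)=\sqrt{2}\,\pi,
\]
which matches the prefactor $1/(\sqrt 2\pi)$ in the claimed identity and is a good sanity check. Since the integrand depends only on $\la_3$, I would write
\[
\E_{GOE}^{3}\bigl\{e^{\la_3^2/2-a(\la_3-b)^2}\bigr\}
=\frac{1}{\sqrt{2}\,\pi}\int_{-\infty}^{\infty}\!\!e^{-a(\la_3-b)^2}\,h(\la_3)\,d\la_3,
\]
where the factor $e^{-\la_3^2/2}$ from the GOE density has cancelled with $e^{\la_3^2/2}$ in the integrand, and
\[
h(\la_3):=\int_{-\infty}^{\la_3}\!\!\int_{-\infty}^{\la_2}(\la_2-\la_1)(\la_3-\la_1)(\la_3-\la_2)\,e^{-(\la_1^2+\la_2^2)/2}\,d\la_1 d\la_2.
\]

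The first main step is to evaluate $h(\la_3)$ in closed form. Expanding $(\la_2-\la_1)(\la_3-\la_1)=\la_1^2-(\la_2+\la_3)\la_1+\la_2\la_3$ and integrating $\la_1$ over $(-\infty,\la_2)$ using the three elementary moments
\[
\int_{-\infty}^{y}\!e^{-x^2/2}dx=\sqrt{2\pi}\,\Phi(y),\quad
\int_{-\infty}^{y}\!xe^{-x^2/2}dx=-e^{-y^2/2},\quad
\int_{-\infty}^{y}\!x^2 e^{-x^2/2}dx=\sqrt{2\pi}\,\Phi(y)-y e^{-y^2/2},
\]
the inner integral collapses to the compact form $K(\la_2,\la_3)=\sqrt{2\pi}(1+\la_2\la_3)\Phi(\la_2)+\la_3 e^{-\la_2^2/2}$. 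The remaining one-dimensional integral $\int_{-\infty}^{\la_3}e^{-\la_2^2/2}(\la_3-\la_2)K(\la_2,\la_3)\,d\la_2$ then splits into two branches: (i) the $\la_3 e^{-\la_2^2/2}$ branch produces a $e^{-\la_2^2}$ integrand, giving, via the substitution $u=\sqrt 2\la_2$, terms of the form $\la_3^2\sqrt\pi\,\Phi(\sqrt 2\la_3)$ and $\tfrac{1}{2}\la_3 e^{-\la_3^2}$; (ii) the $\Phi(\la_2)$ branch is handled by differentiating the identity $\int_{-\infty}^y e^{-x^2/2}\Phi(x)\,dx$ with respect to $y$, or equivalently by integration by parts using $(\Phi(x))'=\phi(x)$, so that all terms reduce to polynomials in $\la_3$ multiplying either $\Phi(\la_3)^2$ or $\Phi(\la_3)e^{-\la_3^2/2}$.

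The second main step is the outer integration against $e^{-a(\la_3-b)^2}$. The two workhorse identities are, for $\alpha>0$,
\[
\int_{-\infty}^{\infty}e^{-\alpha(x-\mu)^2+\beta x}\,dx=\sqrt{\pi/\alpha}\,e^{\beta\mu+\beta^2/(4\alpha)},
\quad
\int_{-\infty}^{\infty}e^{-\alpha(x-\mu)^2}\Phi(cx)\,dx=\sqrt{\pi/\alpha}\,\Phi\!\Bigl(\tfrac{c\mu}{\sqrt{1+c^2/(2\alpha)}}\Bigr),
\]
with versions for $\int x\Phi(cx)e^{-\alpha(x-\mu)^2}dx$ and $\int x^2\Phi(cx)e^{-\alpha(x-\mu)^2}dx$ obtained by $\partial_\mu$. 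Applied with $\alpha=a$, $\mu=b$ and $c\in\{1,\sqrt 2\}$, these generate exactly the two denominators $1+\tfrac{1}{2a}\propto 2a+1$ and $1+\tfrac{1}{a}\propto a+1$ visible in the claimed formula. A separate (and essential) cancellation occurs: the $\Phi(\la_3)^2$ pieces from branch (ii) of the previous step must combine with contributions from the $\Phi(\la_3)e^{-\la_3^2/2}$ pieces to eliminate any residual $\Phi^2$ term, since the final answer only contains $\Phi$ at the first power.

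The hard part will not be any single identity but the careful bookkeeping: roughly a dozen terms, each a polynomial in $b$ of degree at most two times a Gaussian-type integral, must be collected so that the three surviving structures $(\tfrac{1}{a}+2b^2-1)\,\Phi(b\sqrt{2a}/\sqrt{a+1})$, $b\sqrt{a+1}\,e^{-ab^2/(a+1)}$ and $\Phi(\sqrt 2 ab/\sqrt{(2a+1)(a+1)})\,e^{-ab^2/(2a+1)}$ emerge with exactly the right coefficients. Everything else is reducible to the Gaussian moment identities above, so this is fundamentally bookkeeping rather than a conceptual step — which is why the paper omits the details.
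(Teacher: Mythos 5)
Your proposal is correct and is precisely the ``elementary calculation by applying the GOE density'' that the paper asserts and omits: the normalization $c_3=\sqrt2\pi$ checks out, the two-fold inner integral evaluates to $h(\la_3)=\sqrt\pi(2\la_3^2-1)\Phi(\sqrt2\la_3)+\la_3 e^{-\la_3^2}+\sqrt{2\pi}\,e^{-\la_3^2/2}\Phi(\la_3)$ (the $\Phi(\la_3)^2$ contributions cancel already within your branch (ii), since the $A_0$-terms appear with coefficients $+\la_3$ and $-\la_3$), and the outer integration against $e^{-a(\la_3-b)^2}$ via your Gaussian identities reproduces the three stated structures with the right coefficients. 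The only minor imprecision is that branch (ii) also yields $\Phi(\sqrt2\la_3)$ terms (which in fact carry the leading $(\tfrac1a+2b^2-1)\Phi(b\sqrt{2a}/\sqrt{a+1})$ contribution), not only $\Phi(\la_3)^2$ and $\Phi(\la_3)e^{-\la_3^2/2}$; this does not affect the validity of the argument.
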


\subsection{Proofs for Section \ref{section:general manifolds}}
Define $\mu(t_0, \ep)$, $\mu_N(t_0, \ep)$, $\mu_N^u(t_0, \ep)$ and $\mu_N^{u-}(t_0, \ep)$ as in (\ref{Def:various mu's}) with $U_{t_0}(\ep)$ replaced by $B_{t_0}(\ep)$ respectively. The following lemma, which will be used for proving Theorem \ref{Thm:Palm distr manifolds}, is an analogue of Lemma \ref{Lem:Piterbarg}.
\begin{lemma}\label{Lem:Piterbarg lemma on manifold} Let $(M,g)$ be an oriented $N$-dimensional $C^3$ Riemannian manifold with a $C^1$ Riemannian metric $g$.  Let $f$ be a Gaussian random field on $M$ such that $({\bf C}1')$ and $({\bf C}2')$ are fulfilled. Then for any $t_0 \in \overset{\circ}{M}$, as $\ep \to 0$,
\begin{equation*}
\begin{split}
\E \{\mu(t_0, \ep)(\mu(t_0, \ep) -1)\} = o(\ep^N).
\end{split}
\end{equation*}
\end{lemma}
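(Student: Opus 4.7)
The plan is to reduce to the Euclidean case by choosing a convenient local chart around $t_0$ and then mimicking the argument of Lemma \ref{Lem:Piterbarg}. Specifically, because the statement is entirely local (only $B_{t_0}(\ep)$ for small $\ep$ matters), I would fix a normal coordinate system $\varphi\colon U\to V\subset\R^N$ at $t_0$, so that $\varphi(t_0)=0$, the metric tensor at $0$ equals $I_N$, and the Christoffel symbols vanish at $0$. In these coordinates, there exist constants $0<c_1<c_2$ such that $U_{\varphi(t_0)}(c_1\ep)\subset \varphi(B_{t_0}(\ep))\subset U_{\varphi(t_0)}(c_2\ep)$ for all $\ep$ small enough. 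Critical points of $f$ on $M$ correspond precisely to critical points of the pulled-back field $\tilde f = f\circ \varphi^{-1}$ on $V$, and the mean-square Hölder condition $({\bf C}1')$ together with $({\bf C}2')$ transfer to the Euclidean conditions $({\bf C}1)$ and $({\bf C}2)$ for $\tilde f$ on $V$ (the smoothness of $g$ ensures the coordinate change preserves these bounds, up to altering constants).

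Next, I would invoke the Kac--Rice formula for factorial moments on the manifold [Theorem 12.1.1 of Adler and Taylor (2007)], writing
\begin{equation*}
\E\{\mu(t_0,\ep)(\mu(t_0,\ep)-1)\}
=\int_{B_{t_0}(\ep)}\int_{B_{t_0}(\ep)} E_1(t,s)\, p_{\nabla f(t),\nabla f(s)}(0,0)\, \mathrm{Vol}_g(dt)\,\mathrm{Vol}_g(ds),
\end{equation*}
where $E_1(t,s)=\E\{|{\rm det}\nabla^2 f(t)||{\rm det}\nabla^2 f(s)|\mid \nabla f(t)=\nabla f(s)=0\}$, the gradient and Hessian being expressed in a local orthonormal frame. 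Pulling back through $\varphi$, the Riemannian volume has density $\sqrt{\det g_{ij}}$ with respect to Lebesgue measure on $V$, which is bounded above and below by constants on the small neighborhood. The orthonormal-frame derivatives $f_{ij}$ differ from the ordinary partial derivatives $\partial_i\partial_j\tilde f$ by terms involving the Christoffel symbols and first derivatives of $\tilde f$; since the symbols vanish at the origin and are $C^1$, these corrections are uniformly bounded on the small chart, and $|{\rm det}\nabla^2 f|$ is therefore comparable (up to uniform multiplicative constants) to the absolute value of the Euclidean Hessian determinant of $\tilde f$. The joint density of $(\nabla f(t),\nabla f(s))$ at $(0,0)$ is likewise comparable to the joint density of the Euclidean gradients of $\tilde f$, because the linear change of basis between coordinate frame and orthonormal frame has Jacobian with determinant bounded above and below.

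After these identifications, the integral is bounded by a constant multiple of the analogous Euclidean integral for $\tilde f$ on $U_0(c_2\ep)$, which I would then estimate exactly as in Lemma \ref{Lem:Piterbarg}: apply Taylor's expansion for $\nabla \tilde f(s)-\nabla \tilde f(t)$, perform the determinant row-manipulation to extract the factor $\|s-t\|^\eta$ from $|{\rm det}\nabla^2\tilde f(t)|$, and bound the conditional Gaussian density $p_{\nabla\tilde f(t),\nabla\tilde f(s)}(0,0)$ by $C\|t-s\|^{-N}$ using $({\bf C}2)$. Integrating $\|t-s\|^{\eta-N}$ over the cube of side comparable to $\ep$ yields $O(\ep^{N+\eta})=o(\ep^N)$. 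The main technical obstacle is justifying the uniform comparability constants needed to pass between the manifold quantities and their Euclidean coordinate images — in particular, verifying that the Taylor expansion of $\nabla \tilde f$ inherits a remainder of order $\|s-t\|^{1+\eta}$ with a uniformly $L^2$-bounded coefficient, and that the non-degeneracy from $({\bf C}2')$ descends to a quantitative lower bound on $\det\mathrm{Cov}(\nabla\tilde f(t),\nabla^2\tilde f(t)e_{t,s})$ uniformly over the neighborhood; both follow from $({\bf C}1')$, $({\bf C}2')$, and the $C^1$ smoothness of $g$, but this is the step where care is needed.
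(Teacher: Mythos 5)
Your proposal is correct and follows essentially the same route as the paper: localize to a chart around $t_0$, observe that the critical-point count is a diffeomorphism invariant and that $({\bf C}1')$, $({\bf C}2')$ transfer to $({\bf C}1)$, $({\bf C}2)$ for the pulled-back field, and invoke Lemma \ref{Lem:Piterbarg}. The only difference is that your second and third paragraphs re-derive the Kac--Rice comparison of integrands on the manifold, which is harmless but unnecessary once the bijection of critical points is in hand --- the paper simply applies the Euclidean lemma directly to $f\circ\varphi_\alpha^{-1}$.
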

\begin{proof}\ Let $(U_\alpha, \varphi_\alpha)_{\alpha\in I}$ be an atlas on $M$ and let $\ep$ be small enough such that $\varphi_\alpha(B_{t_0}(\ep)) \subset \varphi_\alpha(U_\alpha)$ for some $\alpha\in I$. Set
$$
f^\alpha= f\circ \varphi_\alpha^{-1}: \varphi_\alpha(U_\alpha) \subset \R^N \rightarrow \R.
$$
Then it follows immediately from the diffeomorphism of $\varphi_\alpha$ and the definition of $\mu$ that
\begin{equation*}
\mu(t_0, \ep)=\mu(f, U_\alpha; t_0, \ep) \equiv \mu(f^\alpha, \varphi_\alpha(U_\alpha); \varphi_\alpha(t_0), \ep).
\end{equation*}
Note that $({\bf C}1')$ and $({\bf C}2')$ imply that $f^\alpha$ satisfies $({\bf C}1)$ and $({\bf C}2)$. Applying Lemma \ref{Lem:Piterbarg} gives
\begin{equation*}
\E \{\mu(f^\alpha, \varphi_\alpha(U_\alpha); \varphi_\alpha(t_0), \ep)[\mu(f^\alpha, \varphi_\alpha(U_\alpha); \varphi_\alpha(t_0), \ep)-1]\} = o({\rm Vol}(\varphi_\alpha(B_{t_0}(\ep))))=o(\ep^N).
\end{equation*}
This verifies the desired result.
\end{proof}

\begin{proof}{\bf of Theorem \ref{Thm:Palm distr manifolds}}\ Following the proof in Theorem \ref{Thm:Palm distr}, together with Lemma \ref{Lem:Piterbarg lemma on manifold} and the argument by charts in its proof, we obtain
\begin{equation}\label{Eq:limiting form of Palm distr 2}
\begin{split}
F_{t_0}(u) &=\lim_{\ep\to 0} \frac{ \P\{f(t_0)>u, \mu_N(t_0, \ep)\geq 1\}}{\P\{ \mu_N(t_0, \ep)\geq 1\}} =\lim_{\ep\to 0} \frac{ \P\{\mu_N^u(t_0, \ep) \geq 1 \} + o(\ep^N)}{\P\{ \mu_N(t_0, \ep)\geq 1\}}\\
&=\lim_{\ep\to 0} \frac{ \E\{\mu_N^u(t_0, \ep)  \} + o(\ep^N)}{\E\{ \mu_N(t_0, \ep)\} + o(\ep^N)}.
\end{split}
\end{equation}
By the Kac-Rice metatheorem for random fields on manifolds [cf. Theorem 12.1.1 in Adler and Taylor (2007)] and Lebesgue's continuity theorem,
\begin{equation*}
\begin{split}
&\lim_{\ep\to 0}\frac{\E\{\mu_N^u(t_0, \ep)\}}{{\rm Vol}(B_{t_0}(\ep))} \\
&\quad=\lim_{\ep\to 0}
\frac{1}{{\rm Vol}(B_{t_0}(\ep))}\int_{B_{t_0}(\ep)} \E\{|{\rm det} \nabla^2 f(t)|\mathbbm{1}_{\{f(t)> u\}} \mathbbm{1}_{\{{\rm index}(\nabla^2 f(t))=N\}}|\nabla f(t)=0\}\\
&\qquad \qquad\qquad \qquad\qquad\qquad \times p_{\nabla f(t)}(0){\rm Vol}_g\\
&\quad = \E\{|{\rm det} \nabla^2 f(t_0)|\mathbbm{1}_{\{f(t_0)> u\}} \mathbbm{1}_{\{{\rm index}(\nabla^2 f(t_0))=N\}}|\nabla f(t_0)=0\}p_{\nabla f(t_0)}(0),
\end{split}
\end{equation*}
where ${\rm Vol}_g$ is the volume element on $M$ induced by the Riemannian metric $g$. Similarly,
\begin{equation*}
\begin{split}
\lim_{\ep\to 0}\frac{\E\{\mu_N(t_0, \ep)\} }{{\rm Vol}(B_{t_0}(\ep))}= \E\{|{\rm det} \nabla^2 f(t_0)|\mathbbm{1}_{\{{\rm index}(\nabla^2 f(t_0))=N\}}|\nabla f(t_0)=0\}p_{\nabla f(t_0)}(0).
\end{split}
\end{equation*}
Plugging these facts into (\ref{Eq:limiting form of Palm distr 2}) yields the first line of \eqref{Eq:Palm distr manifolds 1}. The second line of \eqref{Eq:Palm distr manifolds 1} follows similarly.

Applying Theorem \ref{Thm:Palm distr high level} and Corollary \ref{Cor:Palm distr high level o(1)}, together with the argument by charts, we obtain \eqref{Eq:Palm distr manifolds 3}.
\end{proof}

Lemma \ref{Lem:joint distribution sphere} below is on the properties of the covariance of $(f(t), \nabla f(t), \nabla^2 f(t))$, where the gradient $\nabla f(t)$ and Hessian $\nabla^2 f(t)$ are defined as in (\ref{Eq:gradient and hessian on manifolds}) under some orthonormal frame $\{E_i\}_{1\le i\le N}$ on $\mathbb{S}^N$. Since it can be proved similarly to Lemma 3.2.2 or Lemma 4.4.2 in Auffinger (2011), the detailed proof is omitted here.
\begin{lemma}\label{Lem:joint distribution sphere}
Let $f$ be a centered, unit-variance, isotropic Gaussian field on $\mathbb{S}^N$, $N\ge 2$, satisfying $({\bf C}1'')$ and $({\bf C}2')$. Then
\begin{equation}\label{Eq:cov of derivatives sphere}
\begin{split}
&\E\{f_i(t)f(t)\}=\E\{f_i(t)f_{jk}(t)\}=0, \quad \E\{f_i(t)f_j(t)\}=-\E\{f_{ij}(t)f(t)\}=C'\delta_{ij},\\
&\E\{f_{ij}(t)f_{kl}(t)\}=C''(\delta_{ik}\delta_{jl} + \delta_{il}\delta_{jk}) + (C''+C')\delta_{ij}\delta_{kl},
\end{split}
\end{equation}
where $C'$ and $C''$ are defined in \eqref{Def:C' and C''}.
\end{lemma}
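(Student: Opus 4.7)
The plan is to reduce the computation to one about partial derivatives of $\mathrm{Cov}(\tilde{f}(u),\tilde{f}(v)) = c(r(u,v))$ at the origin of normal coordinates, where $c(x) = \sum_n a_n P_n^\lambda(x)$ is the radial function of the covariance, $\tilde f(u) = f(\exp_{t_0}(u))$, and
\[
r(u,v) = \langle \exp_{t_0}(u),\exp_{t_0}(v)\rangle = \cos\|u\|\cos\|v\| + \frac{\sin\|u\|\sin\|v\|}{\|u\|\,\|v\|}\langle u,v\rangle.
\]
The point is that in normal coordinates at $t_0$ associated to the orthonormal frame $\{E_i\}$, the Christoffel symbols vanish at the origin, so $\nabla f(t_0)$ and $\nabla^2 f(t_0)$ identify with the ordinary first and second partial derivatives of $\tilde f$ at $u=0$. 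Condition $({\bf C}1'')$ guarantees, via Lemma \ref{Lem:C^3 sphere}, enough regularity to differentiate termwise.

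First I would Taylor expand $r(u,v)$ around $u=v=0$ using $\cos\|u\| = 1 - \|u\|^2/2 + O(\|u\|^4)$ and $\sin\|u\|/\|u\| = 1 - \|u\|^2/6 + O(\|u\|^4)$, obtaining
\[
r(u,v) = 1 + \langle u,v\rangle - \tfrac{1}{2}\|u\|^2 - \tfrac{1}{2}\|v\|^2 + \tfrac{1}{4}\|u\|^2\|v\|^2 + \cdots,
\]
and I would record the parity observation that $r(u,v)$ is, as a formal series, a sum of monomials of \emph{even} total degree in $(u,v)$: both $\cos\|u\|\cos\|v\|$ and $\sin\|u\|\sin\|v\|/(\|u\|\|v\|)$ are power series in $\|u\|^2,\|v\|^2$, and the latter is multiplied only by the even form $\langle u,v\rangle$. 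This parity observation is precisely what kills $\mathbb{E}\{f_i(t)f_{jk}(t)\}$, which otherwise would need a nontrivial cancellation.

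Then I would apply Faà di Bruno to $\partial^{\alpha}_u \partial^{\beta}_v c(r)$ and evaluate at the origin, noting $c(1)=1$, $c'(1)=C'$, $c''(1)=C''$ and $\partial_{u_i} r|_0 = \partial_{v_j} r|_0 = 0$. Almost every Faà di Bruno term carries a first-derivative factor of $r$ and vanishes. What survives: for $\mathbb{E}\{f_if\}$, nothing nontrivial beyond $0$; for $\mathbb{E}\{f_if_j\}$, the term $C'\,\partial_{u_i}\partial_{v_j}r|_0 = C'\delta_{ij}$ coming from $\langle u,v\rangle$; for $\mathbb{E}\{f_{ij}f\}$, the term $C'\,\partial_{u_i}\partial_{u_j}r|_0 = -C'\delta_{ij}$ coming from $-\|u\|^2/2$; for $\mathbb{E}\{f_if_{jk}\}$, only third-order partials of $r$ appear, and by parity they vanish at $0$; and for $\mathbb{E}\{f_{ij}f_{kl}\}$ there are two surviving contributions, namely $C''$ times the sum over the three partitions of $\{u_i,u_j,v_k,v_l\}$ into pairs of $(\partial_{u_a}\partial_{v_b}r)(\partial_{u_c}\partial_{v_d}r)|_0$, giving $C''(\delta_{ij}\delta_{kl}+\delta_{ik}\delta_{jl}+\delta_{il}\delta_{jk})$, plus $C'\,\partial_{u_i}\partial_{u_j}\partial_{v_k}\partial_{v_l}r|_0$, where only $\|u\|^2\|v\|^2/4$ contributes, producing $C'\delta_{ij}\delta_{kl}$. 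Regrouping yields $C''(\delta_{ik}\delta_{jl}+\delta_{il}\delta_{jk}) + (C''+C')\delta_{ij}\delta_{kl}$, as claimed. The main obstacle is the bookkeeping of the mixed fourth derivative: identifying exactly which monomials in the expansion of $r$ contribute a nonzero constant after four differentiations.
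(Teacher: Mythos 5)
Your computation is correct and complete in all essentials: the normal-coordinate reduction, the parity argument killing $\E\{f_if_{jk}\}$, and the Fa\`a di Bruno bookkeeping (only the two-blocks-of-size-two terms and the single fourth-derivative term $\tfrac14\|u\|^2\|v\|^2$ survive) all check out and reproduce \eqref{Eq:cov of derivatives sphere}. The paper omits the proof entirely, deferring to Lemmas 3.2.2/4.4.2 of Auffinger (2011), which carry out essentially this same direct differentiation of $c(\langle t,s\rangle)$, so your route is the intended one.
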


\begin{lemma}\label{Lem:GOE for det Hessian sphere}
Under the assumptions in Lemma \ref{Lem:joint distribution sphere}, the distribution of $\nabla^2f(t)$ is the same as that of $\sqrt{2C''}M_N + \sqrt{C''+C'}\xi I_N$, where $M_N$ is a GOE matrix and $\xi$ is a standard Gaussian variable independent of $M_N$. Assume further that $({\bf C}3')$ holds, then the conditional distribution of $(\nabla^2f(t)|f(t)=x)$ is the same as the distribution of $\sqrt{2C''}M_N + [\sqrt{C''+C'-C'^2}\xi - C'x]I_N$.
\end{lemma}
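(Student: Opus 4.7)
The plan is to verify both distributional identities by matching means and covariances of symmetric Gaussian matrices, using Lemma \ref{Lem:joint distribution sphere} as the essential input. Both sides of each claim are mean-zero Gaussian in the unconditional case (resp. Gaussian with explicit mean in the conditional case), symmetric $N\times N$, so it suffices to check that every entrywise second moment agrees.

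For the first claim, I would start by writing the candidate matrix $\wt H := \sqrt{2C''}\,M_N + \sqrt{C''+C'}\,\xi I_N$ entrywise: the diagonal entries are $\sqrt{2C''}\,M_{ii} + \sqrt{C''+C'}\,\xi$ and the off-diagonal entries are $\sqrt{2C''}\,M_{ij}$. Using the GOE normalization $\mathrm{Var}(M_{ii})=1$, $\mathrm{Var}(M_{ij})=1/2$ for $i<j$, and the independence of $\xi$ from $M_N$, I would compute $\E\{\wt H_{ii}^2\}=3C''+C'$, $\E\{\wt H_{ij}^2\}=C''$ for $i\ne j$, $\E\{\wt H_{ii}\wt H_{kk}\}=C''+C'$ for $i\ne k$, and zero cross-covariance between diagonal and off-diagonal, or between different off-diagonal entries. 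Comparing with the formulas $\E\{f_{ij}f_{kl}\}=C''(\delta_{ik}\delta_{jl}+\delta_{il}\delta_{jk})+(C''+C')\delta_{ij}\delta_{kl}$ from Lemma \ref{Lem:joint distribution sphere}, each case matches, giving the first equality in distribution.

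For the conditional claim, I would apply the Gaussian regression formula. Since $f(t)$ has unit variance, the conditional mean of $f_{ij}(t)$ given $f(t)=x$ is $\E\{f_{ij}(t)f(t)\}\,x = -C'\delta_{ij}\,x$, so $\E\{\nabla^2 f(t)\mid f(t)=x\} = -C'x\, I_N$. The conditional covariances become
\[
\E\{f_{ij}f_{kl}\mid f(t)=x\} = C''(\delta_{ik}\delta_{jl}+\delta_{il}\delta_{jk}) + (C''+C'-C'^2)\delta_{ij}\delta_{kl},
\]
by subtracting the rank-one correction $(-C'\delta_{ij})(-C'\delta_{kl})$. Condition $({\bf C}3')$ guarantees $C''+C'-C'^2\ge 0$, so $\sqrt{C''+C'-C'^2}$ is well defined. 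Computing the same second moments for $\sqrt{2C''}\,M_N + [\sqrt{C''+C'-C'^2}\,\xi - C'x]I_N$ exactly as above yields diagonal variance $2C''+(C''+C'-C'^2)=3C''+C'-C'^2$, off-diagonal variance $C''$, and cross-diagonal covariance $C''+C'-C'^2$, matching the conditional covariances.

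There is no real obstacle; the proof is a direct covariance-matching argument using only Lemma \ref{Lem:joint distribution sphere} and the GOE normalization, together with $({\bf C}3')$ to ensure the relevant square root is real. The only point that needs a brief remark is that joint Gaussianity together with matching first and second moments is enough to identify the distributions, which follows from the fact that both $\nabla^2 f(t)$ (by smoothness of $f$ and Gaussianity) and the candidate matrix are Gaussian on the $N(N+1)/2$-dimensional space of symmetric matrices. This mirrors the Euclidean argument of Lemma \ref{Lem:GOE for det Hessian}, only with the spherical covariance constants $C'$, $C''$ replacing $-2\rho'$, $4\rho''$.
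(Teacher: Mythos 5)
Your proof is correct and takes essentially the same approach as the paper: both identify the (conditional) Hessian with the GOE-plus-scalar candidate by applying the Gaussian regression formula and matching the entrywise second moments supplied by Lemma \ref{Lem:joint distribution sphere}. The paper merely compresses the computation that you write out, and your closing remark that joint Gaussianity plus matching first and second moments identifies the law is exactly what the paper's phrase ``well-known conditional formula for Gaussian random variables'' is appealing to.
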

\begin{proof}\
The first result is an immediate consequence of Lemma \ref{Lem:joint distribution sphere}. For the second one, applying (\ref{Eq:cov of derivatives sphere}) and the well-known conditional formula for Gaussian random variables, we see that $(\nabla^2f(t)|f(t)=x)$ can be written as $\Delta - C'xI_N$, where $\Delta=(\Delta_{ij})_{1\leq i,j\leq N}$ is a symmetric $N\times N$ matrix with centered Gaussian entries such that
\begin{equation*}
\E\{\Delta_{ij}\Delta_{kl}\}=C''(\delta_{ik}\delta_{jl} + \delta_{il}\delta_{jk}) + (C''+C'-C'^2)\delta_{ij}\delta_{kl}.
\end{equation*}
Therefore, $\Delta$ has the same distribution as the random matrix $\sqrt{2C''}M_N + \sqrt{C''+C'-C'^2}\xi I_N$, completing the proof.
\end{proof}

By similar arguments for proving Lemmas \ref{Lem:expectation of local max} and \ref{Lem:expectation of local max above u}, and applying Lemma \ref{Lem:GOE for det Hessian sphere} instead of Lemma \ref{Lem:GOE for det Hessian},  we obtain the following two lemmas.
\begin{lemma}\label{Lem:expectation of local max sphere} Let $\{f(t): t\in \mathbb{S}^N\}$ be a centered, unit-variance, isotropic Gaussian field satisfying $({\bf C}1'')$ and $({\bf C}2')$. Then for each $t\in \mathbb{S}^N$,
\begin{equation*}
\begin{split}
\E&\{|{\rm det} \nabla^2 f(t)|\mathbbm{1}_{\{{\rm index}(\nabla^2 f(t))=N\}}| \nabla f(t)=0\}\\
&=\Big(\frac{2C''}{\pi(C''+C')}\Big)^{1/2} \Gamma\Big(\frac{N+1}{2}\Big) (2C'')^{N/2}\E_{GOE}^{N+1}\bigg\{ \exp\bigg[\frac{1}{2}\la_{N+1}^2 -\frac{C''}{C''+C'}\la_{N+1}^2 \bigg] \bigg\}.
\end{split}
\end{equation*}
\end{lemma}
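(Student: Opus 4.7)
The plan is to follow exactly the strategy used in the proof of Lemma~\ref{Lem:expectation of local max} for the Euclidean case, replacing Lemma~\ref{Lem:GOE for det Hessian} by its sphere analogue Lemma~\ref{Lem:GOE for det Hessian sphere}. First, I would observe that by the covariance relations in Lemma~\ref{Lem:joint distribution sphere}, in particular $\E\{f_i(t)f_{jk}(t)\}=0$, the Gaussian vectors $\nabla f(t)$ and $\nabla^2 f(t)$ are uncorrelated and therefore independent at a single point $t$. Consequently,
\begin{equation*}
\E\{|{\rm det}\nabla^2 f(t)|\mathbbm{1}_{\{{\rm index}(\nabla^2 f(t))=N\}}\mid \nabla f(t)=0\}=\E\{|{\rm det}\nabla^2 f(t)|\mathbbm{1}_{\{{\rm index}(\nabla^2 f(t))=N\}}\},
\end{equation*}
so the conditioning simply drops out.

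Next, by Lemma~\ref{Lem:GOE for det Hessian sphere}, $\nabla^2 f(t)$ is distributed as $\sqrt{2C''}\,M_N+\sqrt{C''+C'}\,\xi\,I_N$, with $M_N$ a GOE matrix and $\xi$ an independent standard Gaussian. Factoring out $\sqrt{2C''}$ from each row of the matrix inside the determinant gives
\begin{equation*}
|{\rm det}\nabla^2 f(t)|\stackrel{d}{=}(2C'')^{N/2}\bigl|{\rm det}(M_N-X I_N)\bigr|,\qquad X:=-\sqrt{\tfrac{C''+C'}{2C''}}\,\xi,
\end{equation*}
and the index condition on $\nabla^2 f(t)$ corresponds exactly to ${\rm index}(M_N-XI_N)=N$. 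Here $X$ is centered Gaussian with variance $\sigma^2=(C''+C')/(2C'')$, independent of $M_N$.

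The final step is to invoke Lemma~\ref{Lem:GOE computation} with $m=0$ and $\sigma=\sqrt{(C''+C')/(2C'')}$, which yields
\begin{equation*}
\E\bigl\{|{\rm det}(M_N-XI_N)|\mathbbm{1}_{\{{\rm index}(M_N-XI_N)=N\}}\bigr\}=\frac{\Gamma\!\left(\frac{N+1}{2}\right)}{\sqrt{\pi}\,\sigma}\,\E_{GOE}^{N+1}\!\Big\{\exp\Big[\tfrac{1}{2}\la_{N+1}^2-\tfrac{C''}{C''+C'}\la_{N+1}^2\Big]\Big\},
\end{equation*}
since $1/(2\sigma^2)=C''/(C''+C')$ and $1/(\sqrt{\pi}\sigma)=\sqrt{2C''/(\pi(C''+C'))}$. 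Multiplying through by the factor $(2C'')^{N/2}$ extracted earlier produces the stated expression. There is no genuine obstacle here: everything is driven by the distributional identity for $\nabla^2 f(t)$, the independence of $\nabla f(t)$ from $\nabla^2 f(t)$, and a direct application of Lemma~\ref{Lem:GOE computation}; the only care needed is tracking the constants carefully in the change of scale $M_N\mapsto \sqrt{2C''}M_N+\sqrt{C''+C'}\xi I_N$.
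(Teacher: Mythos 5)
Your proposal is correct and is exactly the argument the paper intends: the paper proves this lemma by remarking that it follows from the same reasoning as the Euclidean Lemma \ref{Lem:expectation of local max}, with Lemma \ref{Lem:GOE for det Hessian sphere} in place of Lemma \ref{Lem:GOE for det Hessian}, and your constants ($\sigma^2=(C''+C')/(2C'')$, the factor $(2C'')^{N/2}$, and $1/(2\sigma^2)=C''/(C''+C')$) all check out against the stated formula.
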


\begin{lemma}\label{Lem:expectation of local max above u sphere} Let $\{f(t): t\in \mathbb{S}^N\}$ be a centered, unit-variance, isotropic Gaussian field satisfying $({\bf C}1'')$, $({\bf C}2')$ and $({\bf C}3')$. Then for each $t\in \mathbb{S}^N$ and $x\in \R$,
\begin{equation*}
\begin{split}
\E&\{|{\rm det} \nabla^2 f(t)| \mathbbm{1}_{\{{\rm index}(\nabla^2 f(t))=N\}}|f(t)=x, \nabla f(t)=0\}\\
&=\left\{
  \begin{array}{l l}
     \big(\frac{2C''}{\pi(C''+C'-C'^2)}\big)^{1/2} \Gamma\big(\frac{N+1}{2}\big) (2C'')^{N/2} \\
     \quad \times \E_{GOE}^{N+1}\Big\{ \exp\Big[\frac{\la_{N+1}^2}{2} - \frac{C''\big(\la_{N+1}-\frac{C'x}{\sqrt{2C''}} \big)^2}{C''+C'-C'^2} \Big]\Big\} & \quad \text{if $C''+C'-C'^2> 0$},\\
     (2C'')^{N/2}\E_{GOE}^{N}\big\{ \big(\prod_{i=1}^N|\la_i-\frac{C'x}{\sqrt{2C''}}|\big) \mathbbm{1}_{\{\la_N<\frac{C'x}{\sqrt{2C''}}\}} \big\} & \quad \text{if $C''+C'-C'^2= 0$}.
   \end{array} \right.
\end{split}
\end{equation*}
\end{lemma}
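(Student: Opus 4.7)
The plan is to follow the same template as the proof of Lemma \ref{Lem:expectation of local max above u} in the Euclidean case, substituting Lemma \ref{Lem:GOE for det Hessian sphere} for Lemma \ref{Lem:GOE for det Hessian} at the key reduction step. First I would observe that, by \eqref{Eq:cov of derivatives sphere} in Lemma \ref{Lem:joint distribution sphere}, $\nabla f(t)$ is uncorrelated with, and hence independent of, both $f(t)$ and $\nabla^2 f(t)$. Consequently the conditioning event $\nabla f(t)=0$ can be dropped:
\begin{equation*}
\begin{split}
&\E\{|{\rm det} \nabla^2 f(t)| \mathbbm{1}_{\{{\rm index}(\nabla^2 f(t))=N\}}|f(t)=x, \nabla f(t)=0\}\\
&\qquad = \E\{|{\rm det} \nabla^2 f(t)| \mathbbm{1}_{\{{\rm index}(\nabla^2 f(t))=N\}}|f(t)=x\}.
\end{split}
\end{equation*}

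Next I would invoke Lemma \ref{Lem:GOE for det Hessian sphere} to replace $(\nabla^2 f(t)\,|\,f(t)=x)$ in distribution by $\sqrt{2C''}\,M_N + [\sqrt{C''+C'-C'^2}\,\xi - C'x]I_N$, where $M_N$ is GOE and $\xi$ is an independent standard Gaussian. Pulling out the scalar $\sqrt{2C''}$ from the determinant gives a factor $(2C'')^{N/2}$, leaving the expectation of $|{\rm det}(M_N - XI_N)|\mathbbm{1}_{\{{\rm index}(M_N-XI_N)=N\}}$, where
\[
X = \frac{C'x}{\sqrt{2C''}} - \frac{\sqrt{C''+C'-C'^2}}{\sqrt{2C''}}\,\xi
\]
is an independent Gaussian with mean $m=C'x/\sqrt{2C''}$ and variance $\sigma^2=(C''+C'-C'^2)/(2C'')$.

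In the non-degenerate case $C''+C'-C'^2>0$, I apply Lemma \ref{Lem:GOE computation} with this $(m,\sigma)$ directly; substituting back yields the first branch of the claimed formula, with the prefactor $(2C''/[\pi(C''+C'-C'^2)])^{1/2}\Gamma((N+1)/2)(2C'')^{N/2}$ arising from the $1/(\sqrt{\pi}\sigma)$ and $\Gamma((N+1)/2)$ factors in \eqref{Eq:computing the det with index}. In the degenerate case $C''+C'-C'^2=0$, the Gaussian perturbation $\xi$ drops out and $X$ becomes the deterministic constant $C'x/\sqrt{2C''}$. The index condition ${\rm index}(M_N-XI_N)=N$ is then simply $\la_N < C'x/\sqrt{2C''}$ in terms of the ordered eigenvalues $\la_1\le\cdots\le\la_N$ of $M_N$, and $|{\rm det}(M_N-XI_N)|=\prod_{i=1}^N|\la_i - C'x/\sqrt{2C''}|$; integrating against the GOE eigenvalue law $Q_N(d\la)$ gives the second branch.

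There is no serious obstacle here: the structure is identical to the Euclidean proof and all the heavy lifting has been done in Lemma \ref{Lem:GOE for det Hessian sphere} and Lemma \ref{Lem:GOE computation}. The only point that merits care is verifying that $\sigma^2 = (C''+C'-C'^2)/(2C'')$ is positive under $({\bf C}3')$ (so that Lemma \ref{Lem:GOE computation} applies in the first branch) and that the continuity at $\sigma\downarrow 0$ matches the degenerate formula, so that the two branches are consistent with the $\lim_{C''+C'-C'^2\downarrow 0}$ limit alluded to in the remark after Theorem \ref{Thm:Palm distr sphere}.
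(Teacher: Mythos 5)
Your proposal is correct and is exactly the argument the paper intends: it reduces to the Euclidean template of Lemma \ref{Lem:expectation of local max above u}, using the independence of $\nabla f(t)$ from $(f(t),\nabla^2 f(t))$ given by Lemma \ref{Lem:joint distribution sphere}, the GOE representation of Lemma \ref{Lem:GOE for det Hessian sphere} in place of Lemma \ref{Lem:GOE for det Hessian}, and then Lemma \ref{Lem:GOE computation} with $m=C'x/\sqrt{2C''}$ and $\sigma^2=(C''+C'-C'^2)/(2C'')$. The case split and the resulting prefactors all check out, so nothing further is needed.
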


\par\bigskip\noindent
{\bf Acknowledgments.} The authors thank Robert Adler of the Technion - Israel Institute of Technology for useful discussions and the anonymous referees for their insightful comments which have led to several improvements of this manuscript.

\bibliographystyle{plain}

\begin{small}

\end{small}

\end{document}